\newcommand{\Z}{\mathbb Z}
\newcommand{\QQ}{\mathfrak Q}
\newcommand{\K}{\mathfrak K}
\newcommand{\p}{\mathfrak p}
\renewcommand{\o}{\mathfrak o}
\renewcommand{\k}{\mathfrak k}
\renewcommand{\a}{\mathfrak a}
\renewcommand{\b}{\mathfrak b}
\renewcommand{\c}{\mathfrak c}
\theoremstyle{plain}
\newtheorem{definition}{Definition}[section]
\newtheorem{theorem}{Theorem}[section]
\newtheorem{corollary}[theorem]{Corollary}
\newtheorem{lemma}[theorem]{Lemma}
\newtheorem{proposition}[theorem]{Proposition}
\numberwithin{equation}{section}
\theoremstyle{remark}
\newtheorem{remark}{Remark}
\newtheorem*{acknowledgments}{Acknowledgments}
\begin{document}
\title{The Langlands-Shahidi method for pairs via types and covers}
\author{Yeongseong Jo and M. Krishnamurthy}
\address{Department of Mathematics, The University of Iowa, Iowa City, IA 52242}
\curraddr{Department of Mathematics and Statistics, The University of Maine, Orono, ME 04469}
\email{yeongseong.jo@maine.edu}
\address{Department of Mathematics, The University of Iowa, Iowa City, IA 52242}
\email{muthu-krishnamurthy@uiowa.edu}

\subjclass[2020]{Primary : 22E50, Secondary : 11F70 }
\keywords{Types and covers, local coefficients and epsilon factors}

\begin{abstract}
We compute the local coefficient attached to a pair $(\pi_1,\pi_2)$ of supercuspidal (complex) representations of the general linear group using the theory of types and covers \`{a} la Bushnell-Kutzko. In the process, we obtain another proof of a well-known formula of Shahidi for the corresponding Plancherel constant. The approach taken here can be adapted to other situations of arithmetic interest within the context of the Langlands-Shahidi method, particularly, to that of a Siegel Levi subgroup inside a classical group. 
\end{abstract}
\maketitle

\section{Introduction}
\label{Intro}
Throughout this paper $F$ will denote a non-archimedean local field with residue field cardinality $q$. We fix an additive character $\psi$ which is trivial on $\p_F$ (the maximal ideal of the ring of integers $\o_F$ of $F$) but non-trivial on $\o_F$. In \cite{KK}, the second author with Phil Kutzko outlined a method for calculating the Langlands-Shahidi local coefficient using types and covers via the example of $SL_2(F)$. In this paper, we extend that approach to compute the local coefficient $C_{\psi}(s,\pi_1\times\pi_2)$ attached to a pair $(\pi_1,\pi_2)$ of supercuspidal (complex) representations of the general linear group $GL_n(F)$ and a complex parameter $s$. This complements the work of Paskunas and Stevens \cite{SP} in that we implement a parallel calculation in the context of the Langland-Shahidi method. However, the methods employed here, particularly in the second half of the calculation, are disjoint from that of loc.cit. due to complications arising from possible poles of a certain intertwining operator.

In general, local coefficient by definition is a constant of proportionality arising from uniqueness of induced Whittaker models. Shahidi defined the so-called Langlands-Shahidi (LS) $\gamma$-factors inductively (cf. \cite[Theorem  3.5]{shahidi-plancherel}) so that the local coefficient factorizes as a product of such $\gamma$-factors. The local coefficient is also related to the Plancherel constant (see loc.cit.) which more or less says that ``the square of the local coefficient equals the associated Plancherel constant". We recall the precise relation to the Plancherel constant $\mu(s,\pi_1\times\pi_2)$ in our situation in Subsection~\ref{revisit}. From this standpoint, what we achieve here vis-\`{a}-vis the work of Bushnell, Henniart and Kutzko \cite{BHK} is determine a ``sign" of the square root of the Plancherel constant using types and covers. 
\par

On the other hand, Jacquet, Piatetski-Shapiro, and Shalika \cite{JPSS3} defined the Rankin-Selberg $\gamma$-factor $\gamma(s,\pi_1 \times \pi_2,\psi)$ via the theory of integral representations. By definition it is a proportionality factor between two integrals related to each other by the theory of Fourier transforms. It is known that $\gamma(s,\pi_1\times\pi_2,\psi)$ is a rational function in $q^{-s}$ and can be written in the form 
\[
\gamma(s,\pi_1\times\pi_2,\psi)=\varepsilon(s,\pi_1\times\pi_2,\psi)\frac{L(1-s,\check{\pi}_1\times\check{\pi}_2)}{L(s,\pi_1\times\pi_2)},
\]
where $L(s,\pi_1\times\pi_2)$ is the Rankin-Selberg local $L$-function, $\check{\pi}_i, i=1,2$, denotes the contragredient representation, and $\varepsilon(s,\pi_1\times\pi_2,\psi)$ is the local $\varepsilon$-factor -- a monomial of the form 
\[
 \varepsilon(s,\pi_1 \times \pi_2,\psi)=\varepsilon(0,\pi_1 \times \pi_2,\psi)q^{-f(\pi_1 \times \pi_2,\psi)s}.
\]
The exponent $f(\pi_1 \times \pi_2,\psi)\in {\mathbb Z}$ is called the {\it conductor} attached to the pair $(\pi_1,\pi_2)$; its relation to $\psi$ is given by the equation $f(\pi_1\times\pi_2,\psi)=f(\pi_1\times\pi_2)-n^2{\ell}_{\psi}$, where $\ell_{\psi}$ is the level of the additive character $\psi$. By choice, $\ell_{\psi}=1$ for us. 

In an influential paper \cite{Sha6}, Shahidi proved the equality
\begin{equation}\label{eqn-equal}
C_{\psi}(s,\pi_1\times\pi_2)=\omega_{\pi_2}(-1)^n\gamma(s,\pi_1\times\pi_2,\psi)
\end{equation}
 after suitably normalizing the measures defining the local coefficient. As a consequence, he obtained a formula for $\mu(s,\pi_1\times\pi_2)$: If $\pi_1$,$\pi_2$, are unitary, then 
 \begin{equation}\label{eqn-plan}
 \mu(s,\pi_1\times\pi_2)=q^{f(\pi_1\times\check{\pi}_2,\psi)}\frac{L(s,\pi_1\times\check{\pi}_2)}{L(1+s,\pi_1\times\check{\pi}_2)}\frac{L(-s,\check{\pi}_2\times\pi_1)}{L(1-s,\check{\pi}_2\times\pi_1)}.
 \end{equation}

Our goal in this paper is to provide an alternative (algebraic) approach to obtaining (\ref{eqn-equal}) and (\ref{eqn-plan}) using the theory of types and covers. We believe it opens the door to proving similar equalities in other contexts and consider the present paper a first step in that direction. A case in point is comparison of exterior square local factors obtained from the Langlands-Shahidi method with those obtained via Bump-Friedberg integrals \cite{BuFe}. The authors plan to investigate this in future papers. 

As observed in \cite{KK}, the first instance of calculating the local coefficient along the lines proposed here goes back to Casselman and Shalika \cite{cass1} who computed local coefficients attached to unramified principal series representations using the trivial representation of the Iwahori subgroup -- a special instance of a ``type". In this paper, we use the full force of the theory of types \`{a} la Bushnell and Kutzko \cite{Kut3} to compute $C_{\psi}(s,\pi_1\times\pi_2)$. Let us now give a brief overview of the methods used in this paper and also comment on the organization of its contents. 

We take $\pi_1$ and $\pi_2$ to be supercuspidal representations that contain the same simple character. By \cite{Kut3} (also see \cite[Section 7]{Kut2}), for $i=1,2$, we may choose maximal simple types $(J_i,\lambda_i)$ contained in $\pi_i$ so that $J_1=J_2=J(\beta,\a)$ is a compact open subgroup associated to a maximal simple stratum $[\a,k,0,\beta]$, the representation $\lambda_i$ has a decomposition of the form $\lambda_i=\kappa\otimes\tau_i$, where $\kappa$ is a $\beta$-extension and $\tau_i$ is the inflation of an irreducible cuspidal representation of $J(\beta,\a)/J^{1}(\beta,\a)$. As observed in \cite{bush2, SP}, it is a formal consequence of Mackey's theorem that we may also arrange the choice of these maximal simple types so that ${\rm Hom}_{U_n(F)\cap J(\beta,\a)}(\psi,\lambda_i)\neq 0$. Without loss of any generality, we may take $\pi_1,\pi_2$, to be unitary.
Assuming all this is done, Paskunas and Stevens \cite{SP} defined a pair of distinguished Whittaker functions $({\mathcal W}_1,{\mathcal W}_2)$ in the Whittaker model (w.r.t. $\psi$) of $\pi_1,\pi_2$, respectively, that has many useful properties. We give the necessary definitions and review these properties in Subsections \ref{T} and \ref{whitt}. We claim no new results here, but the reader may find our exposition pertaining to these Whittaker functions useful.  

Now, consider the maximal Levi subgroup $L=GL_n(F)\times GL_n(F)$ inside the group $G=GL_{2n}(F)$ and let $P=LN$ be the associated standard parabolic subgroup. Then $(J_L,\lambda_L)$, $J_L=J(\beta,\a)\times J(\beta,\a)$, $\lambda_L=\lambda_1\times\lambda_2$, is a type in $L$ associated to the $L$-inertial class of $\pi_1\times\pi_2$. Let $(J',\lambda')$ be the corresponding $G$-cover (we discuss this in Section \ref{cov}) as constructed in \cite[Section 7]{Kut2}. This comes equipped with an injective algebra homomorphism ${\mathcal H}(L,\lambda_L)\xrightarrow{j_P} {\mathcal H}(G,\lambda')$ of associated Hecke algebras that realizes parabolic induction. Then one can pass between the category of smooth representations and the corresponding modules over these algebras which plays a crucial role in our computation. In Section \ref{sec-RS}, we review the work by Paskunas and Stevens and start our calculation of the local coefficient $C_{\psi}(s,\pi_1\times\pi_2)$ in Section \ref{LS}. We break into two cases: (i) $\tau_1\ncong \tau_2$ and (ii) $\tau_1\cong \tau_2$. 

We deal with case (i) in Subsection~\ref{sec:unequal}. In this situation the cover $(J',\lambda')$ splits, meaning, the above map $j_P$ is an isomorphism. This makes it is easy to determine the effect of the intertwining operator in question. After suitably conjugating the cover $(J',\lambda')$ by a central element in $L$, we compute both sides of the equation (i.e., \eqref{lc}) defining the local coefficient using the pair $({\mathcal W}_1,{\mathcal W}_2)$. (This is similar to the approach in \cite{SP}.) We partition the relevant integral into ``shells" and prove that up to certain precise volume factors associated with the cover the local coefficient is given as 
\[
C_{\psi}(s,\pi_1\times\pi_2)\sim \int\limits_{J(\beta,\a)}{\mathcal W}_1(\varpi_E^mX){\mathcal W}_2(\varpi_E^mX)\phi_m(X)dX.
\]
Here $m$ is so-called numerical invariant (see Subsection \ref{sec-ni}) which is closely related to the conductor $f(\pi_1\times\check{\pi}_2,\psi)$ and $\phi_m(X)$ is the additive character given by $X\mapsto \psi(\varpi^m_EX_{n,1})$. These put together is the content of Theorem \ref{LS-equal} from which we can deduce \eqref{eqn-equal} in the case at hand. It is likely that the above integral can be expressed as a ``generalized Gauss sum" using properties of $({\mathcal W}_1,{\mathcal W}_2)$ but we have not pursued it here. (See \cite{YeZe} for a related discussion.) In any case, we determine the absolute value of this integral in Subsection \ref{revisit} using the local functional equation. 

We treat case (ii) in Subsection~\ref{sec:equal}. Here, the cover $(J',\lambda')$ is not a split cover and the intertwining operator is not well-behaved. So we cannot proceed as before, instead we use the Hecke algebra isomorphism of \cite{Kut3}. It is proved in loc.cit. that the Hecke algebra ${\mathcal H}(G,\lambda')$ is isomorphic to the Iwahori Hecke algebra of $G'=GL_2(\k)$ for a suitable field extension $\k$ of $F$. We then use the ``generalized spherical vector" defined in \cite{Kim} and transport the corresponding Whittaker function across this Hecke algebra isomorphism using results of Chan and Savin \cite{ChanSavin1, ChanSavin2}. (See Proposition~\ref{GL(n)-Whittaker} for a precise statement.) This reduces the problem of calculating the local coefficient to the aforementioned computation of Casselman and Shalika. We give the final expression for the local coefficient in the non-split case in Theorem~\ref{thm-nonsplit-lc}. Our proof involves a careful analysis of the Hecke algebra isomorphism, in particular, we resolve the sign ambiguities mentioned in \cite[Remark 4.2.6]{Kim}. To conclude, in Subsection~\ref{revisit}, we deduce \eqref{eqn-plan} using certain volume computations. 

We expect the simplifying assumption that $\pi_1$ and $\pi_2$ belong to the same endo-class is not necessary. (See \cite{Kim14} for progress in this direction.) 

\section{Types and Whittaker Functions}
\subsection{Maximal simple types}
\label{T}
In this section we review the structure of irreducible supercuspidal representations of $G=GL_n(F)$ via Bushnell-Kutzko's theory of types. The definitive reference for the theory is \cite{Kut3} and we adopt the notation there with minor modifications. 
Let $A$ denote the algebra $M_{n}(F)$ of $F$-endomorphisms of $F^n$ and let $\a$ be a {{\it hereditary}} $\o_F$-order in $A$. Let $\p=\p_{\a}$ denote its Jacobson radical, a two-sided ideal of $\a$. Let $U(\a)$ denote the group of units ${\a}^{\times}$ and set $U^k(\a)=1+\p^k$ for $k\geq 1$. These are compact open subgroups of $G$. Let ${K}(\a)=N_{G}(U(\a))$ (or equivalently, defined as the $G$-normalizer of $\a$), then $K(\a)$ is a open compact-modulo-center subgroup of $G$. It is useful to know that the $K(\a)$ also normalizes the subgroups $U^k(\a), k\geq 1$. Let $v_{\a}$ be the valuation map associated with the hereditary order $\a$. This induces a surjective homomorphism from 
\[
K(\a)\longrightarrow \Z.
\]

We say $\a$ is a {\it principal hereditary order} if the ideal $\p_{\a}$ is principal. In this situation, the group $K(\a)$ is a maximal compact mod center subgroup of $G$ and all such subgroups of $G$ arise this way. Further,  for $\pi_{\a}$ satisfying $v_{\a}(\pi_{\a})=1$, we have 
\[
\p=\a\pi_{\a}=\pi_{\a}\a\mbox{ and }K(\a)=\langle\pi_{\a}\rangle U(\a).
\]
Any principal hereditary order $\a$ is $G$-conjugate to the the order of matrices over $\o_F$ which are upper triangular (in blocks) modulo $\p_F$, where each block is of size $n/e$ and the number of blocks $e=e_\a$ is the period of the lattice chain ${\mathcal L}$ associated to $\a$. 

Let  $[\a,k,0,\beta]$, $k\geq 1$, be a {\it principal simple stratum} in $A$ (see \cite[(1.5.5)]{Kut3}). It consists of a principal hereditary $\o_F$-order $\a$ in $A$ and a matrix $\beta\in A$ satisfying 
\begin{enumerate}
\item[(i)] the algebra $E=F[\beta]$ is a field, whose degree over $F$ is denoted $d$,
\item[(ii)] $E^{\times}\subset K(\a)$,
\item[(iii)] $v_{\a}(\beta)=-k$,
\end{enumerate}
and another technical condition denoted as (iv) in loc.cit. Let $B$ denote the $A$-centralizer of $\beta$ and put $\b=\a\cap B$, $\QQ=\text{rad}(\b)$. The $E$-algebra $B$ is isomorphic to $M_{n/d}(E)$ and $\b$ is a hereditary $\o_E$-order in $B$. The stratum is said to be {\it maximal} if $\b$ is a maximal $\o_E$-order in $B$; given an isomorphism of $E$-algebras $B\cong M_{n/d}(E)$, one identifies $\b$ with the standard maximal order $M_{n/d}(\o_E)$. The lattice period $e_{\a}$ in this case is same as the  ramification index $e(E/F)$ of $E/F$.  Attached to such a stratum are a pair of $\o_F$-orders given by ${\mathfrak H}(\beta,\a)\subseteq {\mathfrak J}(\beta,\a)\subseteq \a$ given by 
\[
{\mathfrak H}(\beta,\a)=\b+\p^{[\frac{n}{2}]+1}; {\mathfrak J}(\beta,\a)=\b+\p^{[\frac{n+1}{2}]}
\]
which gives the compact open subgroups $H(\beta,\a)={\mathfrak H}(\beta,\a)^{\times}$ and $J(\beta,\a)={\mathfrak J}(\beta,\a)^{\times}$ of $G$. These are filtered by $H^m(\beta,\a)=H(\beta,\a)\cap U^m(\a)$, $J^m(\beta,\a)=J(\beta,\a)\cap U^m(\a)$,  $m\geq 0$, where $U^{0}(\a)=\a^{\times}$. 
In particular, we have compact open subgroups
\[
H^1(\beta,\a)\subseteq J^{1}(\beta,\a)\subseteq J(\beta,\a) 
\]
of $U(\a)$. There is a finite set of characters ${\mathcal C}(\a,\beta)$ of $H^{1}(\beta,\a)$ called {\it simple characters}. By construction, $J(\beta,\a)$ normalizes $H^{1}(\beta,\a)$ and ${\mathcal C}(\a,\beta)$ depends on the choice of an additive character $\psi=\psi_F$ of $F$ of level one which we fix throughout this paper. 

For a maximal (principal) simple stratum $[\a,k,0,\beta]$ as above, put 
\[
\tilde{J}(\beta,\a)=E^{\times} J(\beta,\a)
\]
which is a compact mod center subgroup of $G$. The data comprising these subgroups and the set ${\mathcal C}(\a,\beta)$ of simple characters are at the core of the classification of supercuspidal representations of $G$. For a summary of their properties, see \cite[(2.1.1)]{B-Hen}. Here, we highlight that 
\begin{enumerate}[label=$(\mathrm{\alph*})$]
\item\label{B-Hen1} $J(\beta,\a)$ is the unique maximal compact subgroup of $\tilde{J}(\beta,\a)$. 
\item\label{B-Hen2} $J(\beta,\a)=U(\b)\cdot J^{1}(\beta,\a)$ with $U(\b)\cap J^{1}(\beta,\a)=U^{1}(\b)$.
\item\label{B-Hen3} The normalizer of any simple character $\theta\in {\mathcal C}(\a,\beta)$ in $G$ is $\tilde{J}(\beta,\a)$.
\item\label{B-Hen4} Given a $\theta\in {\mathcal C}(\a,\beta)$, there is a unique irreducible representation $\eta$ of $J^{1}(\beta,\a)$ containing $\theta$.
\end{enumerate}

For $k=0$, we set $E=F$ and take $\a$ to be a maximal $\o_F$-order and deem the resulting $[\a,0,0,0]$ a maximal simple stratum as well. In this situation $J(0,\a)=U(\a)$, is a maximal compact open subgroup of $G$, $J^{1}(0,\a)=H^{1}(0,\a)=U^{1}(\a)$, and $\tilde{J}(0,\a)=K(\a)=F^{\times}U(\a)$. By a simple character in this situation, we mean the {\it trivial character} of $U^{1}(\a)$. 

\begin{definition}
A pair $(J,\lambda)$, where $J$ is a compact open subgroup of $G$ and $\lambda$ is an irreducible representation of $J$,  is said to be a {\it maximal simple type} if there is a maximal principal simple stratum $[\a,k,0,\beta]$ (including the case $k=0$ in the above sense) and a simple character $\theta\in {\mathcal C}(\a,\beta)$, satisfying $J=J(\beta,\a)$ and $\theta$ is contained in the restriction of $\lambda$ to $H^{1}(\beta,\a)$. 
\end{definition}
The simple character $\theta$ is said to be {\it attached} to $\lambda$. Let ${\mathcal A}_{n}^{0}(F)$ denote the category of irreducible admissible supercuspidal (complex) representations of $G$. 
One of the main results in \cite{Kut3} (see Ch. 6) on the classification of supercuspidal representations, in terms of maximal simple types, is the following:
\begin{proposition}\label{prop-BK}
Suppose $\sigma\in {\mathcal A}_{n}^{0}(F)$.
\begin{enumerate}[label=$(\mathrm{\alph*})$]
\item There exists a {\it{maximal simple type}} $(J,\lambda)$ which is uniquely determined up to $G$-conjugacy, so that that the restriction of $\sigma$ to $J$ contains $\lambda$. 
\item Let $[\a,k,0,\beta]$ be a maximal simple stratum such that $J=J(\beta,\a)$ and $\theta\in {\mathcal C}(\a,\beta)$. Then $\lambda$ extends uniquely to a representation $\tilde{\lambda}$ of the normalizer $\tilde{J}:=\tilde{J}(\beta,\a)$ of $\theta$ in $G$ such that the compact induction of $\tilde{\lambda}$ is isomorphic to $\sigma$. 
\end{enumerate}
\end{proposition}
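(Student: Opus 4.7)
The proof splits naturally along the two parts. For part (a), I would start from smoothness of $\sigma$: any non-zero vector is fixed by some $U^{k+1}(\a)$, and the corresponding action of the abelian quotient $U^k(\a)/U^{k+1}(\a)$ produces a \emph{fundamental stratum} contained in $\sigma$. The split/non-split dichotomy for fundamental strata, together with the observation that a split fundamental stratum would force $\sigma$ to occur as a subquotient of a parabolic induction from a proper Levi (contradicting supercuspidality), shows that $\sigma$ must in fact contain a \emph{simple} stratum. A successive refinement procedure --- replacing each stratum by a deeper equivalent one while preserving containment in $\sigma$ --- terminates in a maximal simple stratum $[\a,k,0,\beta]$ together with a simple character $\theta \in {\mathcal C}(\a,\beta)$ occurring in $\sigma|_{H^{1}(\beta,\a)}$. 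Property \ref{B-Hen4} produces the Heisenberg representation $\eta$ of $J^{1}(\beta,\a)$ above $\theta$; a $\beta$-extension $\kappa$ of $\eta$ to $J(\beta,\a)$ (constructed via the Glauberman correspondence applied to the projective representation of $J/J^{1}$ obtained from $\eta$) then realizes $\sigma|_{J(\beta,\a)}$ as a sum of representations of the form $\kappa \otimes \tau$, where $\tau$ is inflated from an irreducible representation of $J(\beta,\a)/J^{1}(\beta,\a) \cong GL_{n/d}(\k_E)$. Supercuspidality of $\sigma$ translates, via the Hecke-algebra dictionary of \cite{Kut3}, into cuspidality of $\tau$, producing the desired maximal simple type $(J,\lambda) = (J(\beta,\a),\,\kappa \otimes \tau)$.

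For uniqueness of $(J,\lambda)$ up to $G$-conjugacy, the key input is the intertwining computation $I_G(\lambda) = J$. Using \ref{B-Hen3} and the compatibility of simple characters with conjugation, this reduces to an intertwining calculation over $B^\times \cong GL_{n/d}(E)$ for the cuspidal piece. Given a second maximal simple type $(J',\lambda')$ contained in $\sigma$, Frobenius reciprocity yields a non-zero element of ${\rm Hom}_G(\text{c-ind}_{J}^{G}\lambda,\,\text{c-ind}_{J'}^{G}\lambda')$, and Mackey decomposition combined with the intertwining computation forces $(J',\lambda')$ to be $G$-conjugate to $(J,\lambda)$.

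For part (b), note that $\tilde{J}(\beta,\a)/J(\beta,\a) \cong E^\times/\o_E^\times \cong \Z$, generated by any prime element $\varpi_E$ of $E$; thus extensions of $\lambda$ to $\tilde{J}$ form a torsor over the unramified characters of this cyclic quotient. The ``correct'' extension $\tilde\lambda$ is selected by requiring ${\rm Hom}_{\tilde{J}}(\tilde\lambda,\,\sigma|_{\tilde{J}}) \neq 0$, and this Hom space is non-zero because the $\lambda$-isotypic subspace of $\sigma|_{J}$ is stable under the natural $\tilde{J}$-action coming from property \ref{B-Hen3}. Irreducibility of $\pi := \text{c-ind}_{\tilde{J}}^{G}\tilde\lambda$ follows from the strengthened intertwining equality $I_G(\tilde\lambda) = \tilde{J}$ via the standard Mackey-theoretic criterion for irreducibility of compact induction. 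Frobenius reciprocity then produces a non-zero $G$-map $\pi \to \sigma$, which must be an isomorphism by irreducibility of both sides. Uniqueness of $\tilde\lambda$ is then immediate: twisting by any non-trivial unramified character of $\tilde{J}/J$ induces to a non-isomorphic unramified twist of $\sigma$.

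The principal obstacle in executing this plan is the opening move of part (a) --- exhibiting a simple character inside $\sigma$. The stratum classification together with the refinement procedure form the technical core of \cite{Kut3}, and the subtlest point is arguing that supercuspidality forces the refinement to terminate in a stratum whose attached representation $\tau$ of $GL_{n/d}(\k_E)$ is genuinely cuspidal; a non-cuspidal $\tau$ would, through the Hecke-algebra equivalence of \cite{Kut3}, correspond to a non-supercuspidal constituent of the induced representation, contradicting the hypothesis on $\sigma$. By contrast, the intertwining and induction arguments underlying part (b) are comparatively formal once the equality $I_G(\tilde\lambda) = \tilde{J}$ has been secured.
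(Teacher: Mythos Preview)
The paper does not prove this proposition at all: it is stated as a quotation of one of the main results of \cite{Kut3} (see the sentence immediately preceding the statement, ``One of the main results in \cite{Kut3} (see Ch.~6) on the classification of supercuspidal representations\ldots''), and no proof environment follows. Your proposal is therefore not comparable to ``the paper's own proof'' --- there is none to compare against.

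That said, what you have written is a reasonable high-level outline of the strategy actually carried out in \cite{Kut3}, Chapters~5--8: the existence argument via fundamental strata and successive refinement, the role of the Hecke-algebra dictionary in forcing cuspidality of $\tau$, and the intertwining computation $I_G(\lambda)=\tilde{J}$ underlying both uniqueness and irreducibility of the compact induction. If your intent was to demonstrate understanding of where the result comes from, this sketch is accurate in its broad strokes. But for the purposes of the present paper the correct ``proof'' is simply a citation, and no argument is expected or supplied.
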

A pair $(\tilde{J},\tilde{\lambda})$ arising in this manner is called an {\it extended maximal simple type}. If $\theta$ is the simple character attached to $\lambda$, the following also holds: 
\[
\lambda\cong \kappa\otimes \tau,
\]
where $\kappa$ is a $\beta$-{\it extension} of the unique irreducible representation $\eta$ of $J^1=J^{1}(\beta,\a)$ containing $\theta$, and $\tau$ is the inflation to $J$ of  a cuspidal representation of $J(\beta,\a)/J^1(\beta,\a)\cong U(\mathfrak b)/U^{1}(\mathfrak b)\cong GL_{n/d}(k_E)$. If $k=0$, the representation $\sigma$ is said to be of ``level zero''. In this case, $\theta$, $\eta$ and $\kappa$ are all trivial, and $\lambda$ is the inflation of an irreducible cuspidal representation of $GL_n(k_F)$. Otherwise, $\sigma$ is of ``positive level'' and $k$ is the smallest integer so that $U^{k+1}(\mathfrak a)\subset \text{ker }\sigma$. The {\it level} $l_{\sigma}$ (normalized) of $\sigma$ is defined as ${k}/{e}$, where $e=e_{\a}=e(E/F)$ with $E=F[\beta]$. 

In both situations, the group $\tilde{J}\subseteq K(\a)$. If we put $\tilde{\rho}=\mbox{Ind}_{\tilde{J}}^{K(\mathfrak a)}\tilde{\lambda}$, then by transitivity of induction $\sigma\cong \mbox{c-Ind}_{K(\a)}^{G}\tilde{\rho}$, consequently $\tilde{\rho}$ is irreducible. One can check using Mackey formula that 
\[
\tilde{\rho}|_{U(\a)}\cong \rho:=\mbox{Ind}_{J(\beta,\a)}^{U(\a)}\lambda
\] 
and that it is also irreducible. 

Let ${\mathfrak s}=[G,\sigma]_{G}$ be the supercuspidal inertial class in $G$ determined by $\sigma$. By \cite[6.2.3]{Kut3}, an irreducible representation $\sigma'$ of $G$ contains $\lambda$ (or equivalently $\rho$)  if and only if $\sigma'\simeq \sigma\otimes\chi$ for some unramified quasicharacter $\chi$ of $F^{\times}$. This is what one means by ``$(J(\beta,\a),\lambda)$ (or $(U(\a),\rho)$) is a ${\mathfrak s}$-type". 
\subsection{Explicit Whittaker Functions}
\label{whitt}
In this section we continue with $\sigma\in {\mathcal A}_{n}^{0}(F)$ and review the construction of a certain special Whitakker function in the Whittaker model of $\sigma$ due to Paskunas and Stevens \cite{SP}. Let ${\mathfrak K}$ be an open, compact mod centre subgroup of $G$ and let $(\Lambda,W)$ be a smooth irreducible representation of ${\mathfrak K}$ such that $\sigma\cong \mbox{c-Ind}_{\mathfrak K}^{G}\,\Lambda$. Let $U$ be the unipotent subgroup of $G$ consisting of upper triangular unipotent matrices and let $B=TU$ be the corresponding Borel subgroup of $G$. 

Let ${\mathcal C}$ be the class of functions in $\mbox{c-Ind}_{\mathfrak K}^{G}\,\Lambda$ that are supported in ${\mathfrak K}$. Then there is a canonical ${\mathfrak K}$-embedding, $w\mapsto \varphi_w$, from $W$ onto ${\mathcal C}$ given by 
\[
\varphi_w(g)=\left\{\begin{array}{ll}\Lambda(g)w,&\mbox{if }g\in{\mathfrak K};\\0,&\mbox{if }g\not\in\mathfrak{K}.\end{array}\right.
\]

Fix a smooth character $\psi=\psi_F$ of $F$ of level one (as before), trivial on $\p_F$, but not on $\o_F$. Then this determines a smooth non-degenerate character of $U$, also denoted $\psi$ by abuse of notation, via
\begin{equation}\label{psi}
\psi(u)=\psi(\sum_{i=1}^{n-1} u_{i,i+1}), u=(u_{ij})\in U.
\end{equation}
It is well-known that $\sigma$ is generic and that $\mbox{dim}_{\mathbb{C}}\mbox{ Hom}_{G}(\mbox{c-Ind}_{\mathfrak K}^{G}\,\Lambda,\mbox{Ind}_{U}^{G}\,\psi)=1$. This space may be described using Mackey theory \cite{Kut4}. Namely, let $\mathcal{H}(G,\Lambda,\psi)$ denote the space of functions, 
\[
f: G\longrightarrow \mbox{Hom}_{\mathbb{C}}(W,\mathbb{C}),
\]
which satisfy 
\[
f(ugk)=\psi(u)f(g)\circ\Lambda(k), u\in U, g\in G, k\in {\mathfrak K}.
\]
Let $dx$ denote the Haar measure on $G/F^{\times}$. Then, for $\phi\in\mbox{c-Ind}_{\mathfrak K}^{G}\,\Lambda$, $f\in {\mathcal H}(G,\Lambda,\psi)$, we can form the convolution 
\begin{equation}
\label{conv}
f\star\phi(g)=\int\limits_{G/F^{\times}}f(y)(\phi(y^{-1}g))dy, g\in G.
\end{equation}
One checks that the function $f\star \phi$ belongs to $\mbox{Ind}_{U}^{G}\,\psi$ and thus this determines a $G$-homomorphism from 
\begin{equation}
\label{eqn-W}
\mathcal{H}(G,\Lambda,\psi)\longrightarrow \mbox{ Hom}_{G}(\mbox{c-Ind}_{\mathfrak K}^{G}\,\Lambda,\mbox{Ind}_{U}^{G}\,\psi)
\end{equation}
which according to \cite{Kut4} is an isomorphism. Since the right hand side of (\ref{eqn-W}) is one dimensional and $\sigma$ is irreducible, there is a unique $G$-subspace ${\mathcal W}(\sigma,\psi)$ of $\mbox{Ind}_{U}^{G}(\psi)$ which is isomorphic to $\sigma$; we call this the $\psi$-Whittaker model of $\sigma$. We write 
\[
\sigma\cong \mbox{c-Ind}_{\mathfrak K}^{G}\,\Lambda\ni \phi\mapsto {\mathcal W}_{\phi}\in \mathcal{W}(\sigma,\psi)
\]
to denote this bijection. 

 It follows from (\ref{eqn-W}) that there is a unique double coset $Ux{\mathfrak K}$ that supports a nonzero element of $\mathcal{H}(G,\Lambda,\psi)$ and the space of such functions is one dimensional. This in turn means that there is a unique $x$ such that $\Lambda$ contains the character $\psi^{x}$ of $x^{-1}Ux\cap {\mathfrak K}$ with multiplicity $1$, or equivalently, the dual representation $(\check{\Lambda},\check{W})$ contains the inverse of the character $\psi^{x}$ with multiplicity $1$. Let $\mu$ be a nonzero element of $\check{W}=\mbox{Hom}_{\mathbb{C}}(W,\mathbb{C})$ that transforms according to $(\psi^{x})^{-1}$ when restricted to $x^{-1}Ux\cap {\mathfrak K}$. Define $f=f_{\mu}\in{\mathcal H}(G,\Lambda,\psi)$ supported on $Ux{\mathfrak K}$ as 
\[
f(uxk)=\psi(u)(\mu\circ\Lambda(k)), u\in U, k\in{\mathfrak K}.
\]
It is a simple matter to check that $f$ is well-defined. The aforementioned bijection is then given by 
\begin{equation}\label{bij}
\phi\mapsto {\mathcal W}_{\phi}: =f\star \phi,
\end{equation}
and the corresponding $\psi$-Whittaker functional $\Omega (=\Omega^f)$ is given by $\Omega(\phi)=(f\star\phi)(1)$. In particular, $\Omega(\varphi_w)=0$ unless $x$ represents the trivial double coset $U{\mathfrak K}$. Since $f$ is unique up to a scalar, we suppress the obvious dependence of the function ${\mathcal W}_{\phi}$ and the functional $\Omega$ on $f$. In any case, the Whittaker space ${\mathcal W}(\sigma,\psi)$ is independent of the choice of $f$.  

For calculation purposes, it will be convenient to have the coset containing the identity as the one that supports the Whittaker model. To this end, keeping the above notation, conjugating by $x$ we see that 
\[
\sigma\cong \mbox{c-Ind}_{x{\mathfrak K}x^{-1}}^{G}\,\Lambda^{x^{-1}}.
\]
This has the effect of changing ${\mathfrak K}\mapsto x{\mathfrak K}x^{-1}$, $\Lambda\mapsto \Lambda^{x^{-1}}$ and $f\mapsto R(x)f$. Then $\Lambda^{x^{-1}}$ contains $\psi$ when restricted to $x{\mathfrak K}x^{-1}\cap U$ and $R(x)f$ is supported in $Ux{\mathfrak K}x^{-1}$. So without loss of any generality, we may assume $\Lambda$ contains $\psi$ while writing $\sigma$ as a compactly induced representation from a compact mod center subgroup of $G$. 

\begin{proposition}\label{Whitt}
Suppose $\sigma\cong \mbox{c-}{\rm Ind}_{\mathfrak K}^{G}\Lambda$ is as above and assume $\mathrm{Hom}_{U\cap \K}(\psi,\Lambda)\neq 0$. Then there exists a Whittaker function ${\mathcal W}\in {\mathcal W}(\sigma,\psi)$ whose support is contained in $U\K$ and satisfying the following properties: 
\begin{itemize}
\item ${\mathcal W}(1)=1$
\item ${\mathcal W}(gu)=\psi(u){\mathcal W}(g), u\in U\cap \K$.
\end{itemize}
Further, if $\sigma$ is unitarizable, we may choose $W$ so that it also satisfies  
\[
{\mathcal W}(g^{-1})=\overline{{\mathcal W}(g)}, g\in \K.
\] 
\end{proposition}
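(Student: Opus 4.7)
The plan is to exploit the explicit convolution realization (\ref{bij}) of the Whittaker bijection with a carefully chosen pair $(w, \mu)$. The hypothesis $\mathrm{Hom}_{U \cap \K}(\psi, \Lambda) \neq 0$ says exactly that the distinguished double coset $Ux\K$ supporting a non-zero element of $\mathcal{H}(G, \Lambda, \psi)$ may be taken to be $U\K$, i.e.\ $x = 1$ in the discussion preceding the proposition. Pick a non-zero $w \in W$ with $\Lambda(u)w = \psi(u)w$ for $u \in U \cap \K$, and a non-zero $\mu \in \check{W}$ transforming by $\psi^{-1}$; form $\varphi_w$, $f = f_\mu$, and the candidate $\mathcal{W} := f \star \varphi_w$ as in the text.

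The support condition is immediate from intersecting the supports of $f$ and $\varphi_w$ in the convolution (\ref{conv}): the integrand vanishes unless $y$ lies in $U\K \cap g\K$, which forces $g \in U\K$. Restricting to $g = k \in \K$, a short calculation shows the integrand equals $\mu(\Lambda(k)w)$ independently of $y \in \K$, so that $\mathcal{W}(k)$ is proportional to $\mu(\Lambda(k)w)$. The pairing $\mu(w)$ is non-zero because the one-dimensional $\psi$- and $\psi^{-1}$-isotypics of the finite-dimensional spaces $W$ and $\check{W}$ pair non-degenerately under evaluation, and we rescale to arrange $\mathcal{W}(1) = 1$. Right-equivariance $\mathcal{W}(gu) = \psi(u)\mathcal{W}(g)$ for $u \in U \cap \K$ then follows by writing $g = u_0 k$ with $u_0 \in U$, $k \in \K$, invoking membership in $\mathrm{Ind}_U^G \psi$ for the $u_0$ factor, and using the eigenvector relation for $w$ on the remaining $ku$ piece; if $g \notin U\K$ both sides vanish.

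For the unitarizable case, the strategy is to choose $\mu$ to be the Riesz dual of $w$ with respect to a $\Lambda$-invariant Hermitian inner product $\langle \cdot, \cdot \rangle$ on $W$; such an inner product exists by the usual averaging argument over $\K/Z(G)$, since the central character of $\Lambda$ is unitary (inherited from $\sigma$) and $\K/Z(G)$ is compact. Under the conjugate-linear Riesz map the $\psi$-isotypic of $W$ is sent onto the $\psi^{-1}$-isotypic of $\check{W}$, so this choice of $\mu$ is admissible, and then $\mathcal{W}(k)$ is proportional to $\langle \Lambda(k) w, w \rangle$. The desired conjugate symmetry $\mathcal{W}(k^{-1}) = \overline{\mathcal{W}(k)}$ then falls out of the identity $\langle \Lambda(k^{-1})w, w\rangle = \langle w, \Lambda(k)w\rangle = \overline{\langle \Lambda(k)w, w\rangle}$. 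The main (mild) obstacle I anticipate is the non-degeneracy step $\mu(w) \neq 0$ in the general setting; in the unitary case it reduces to $\|w\|^2 > 0$ and is automatic, while in the general case one needs the irreducibility of $\Lambda$ and multiplicity-one to cut the $W \times \check{W}$ evaluation pairing down to a non-degenerate pairing between the one-dimensional isotypic components.
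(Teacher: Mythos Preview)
Your proposal is correct and follows essentially the same route as the paper: choose $x=1$, pick $w$ in the $\psi$-isotypic of $W$ and $\mu$ in the $\psi^{-1}$-isotypic of $\check W$, set ${\mathcal W}=f_\mu\star\varphi_w$, and verify that on $\K$ this collapses to the matrix coefficient $\mu(\Lambda(k)w)$; in the unitary case take $\mu=\langle\,\cdot\,,w\rangle$. The only cosmetic difference is that the paper first rewrites the convolution as $\int_U\psi^{-1}(u)\mu(\phi(ug))\,du$ before specializing, whereas you compute directly over $\K/F^\times$; and your explicit justification of $\mu(w)\neq 0$ via non-degeneracy of the isotypic pairing is in fact more careful than the paper, which simply asserts the existence of $w$ with $\mu(w)=1$.
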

\begin{proof}
Since $\Lambda$ contains $\psi$, we may take $x=1$ in the above discussion. Choose $0\neq \mu\in \check{W}$ that transforms according to $\psi^{-1}$ on $U\cap\K$. The space of such functionals is one dimensional. Let $f\in {\mathcal H}(G,\Lambda,\psi)$ be the function supported in $U\K$ defined by $f(1)=\mu$. Then, for the measure $du$ normalized so that $\text{vol}(U\cap \K)=1$, and after rescaling $\mu$, one checks that the bijection $\phi\mapsto {\mathcal W}_{\phi}$, $\phi\in \mbox{c-Ind}_{\K}^{G}\Lambda$, is given by 
\[
{\mathcal W}_{\phi}(g)=(f\star\phi)(g)=\int\limits_{U}\psi^{-1}(u)\mu(\phi(ug))du.
\]
Now, since the dimension of the space $\mbox{Hom}_{U\cap \K}(\psi,\Lambda)$ is one, there is a unique $w\in W$ that transforms according to $\psi$ and satisfying $\mu(w)=1$. Let $c_{\mu,w}(g)=\mu({\Lambda}(g)w), g\in  \K$, denote the matrix coefficient associated to the pair $(\mu,w)$. Put ${\mathcal W}={\mathcal W}_{\phi_w}$. Since $\phi_w$ is supported in $\K$, it follows that ${\mathcal W}$ is supported in $U\K$. It also follows from the above formula that, for $k\in \K$, 
\[
{\mathcal W}(k)=\int\limits_{U\cap \K}\psi^{-1}(u)\mu(\phi_w(uk))du=\int\limits_{U\cap\K}\psi^{-1}(u)\mu(\Lambda(uk)w)du=\mu(\Lambda(k)w).\nonumber
\]
Here, the last equality follows since by choice $\mu$ has the property 
\[
\mu(\Lambda(u)w')=\psi(u)\mu(w'), u\in U\cap \K, w'\in W.
\]
Thus ${\mathcal W}(uk)=\psi(u)c_{\mu,w}(k), u\in U, k\in \K$ and it clearly has the required properties. Regarding the final assertion, if $\sigma$ is unitarizable, so is the representation $\Lambda$. Fix a $\K$-invariant Hermitian inner product $\langle\cdot,\cdot\rangle$ on $W$ and identify $\check{W}$ with $W$. Choose $w \in W$ so that it transforms according to $\psi$ on $U\cap \K$ and satisfies $\langle w,w\rangle=1$. Now apply the above argument with $\mu=\mu_w$, where $\mu_w(w')=\langle w',w\rangle, w'\in W$, to see that ${\mathcal W}(k)=\langle \Lambda(k)w,w \rangle$, $k\in \K$. 
\end{proof}
By Proposition~\ref{prop-BK}, $\sigma$ determines a maximal simple type $(J,\lambda)$ that is unique up to $G$-conjugacy, we may therefore after conjugation (if necessary) choose $\K=\tilde{J}$ and $\Lambda=\tilde{\lambda}$, so that $\tilde{\lambda}$ contains the character $\psi$ when restricted to $U\cap \tilde{J}$. This gives us the following (cf. \cite[Proposition 1.6]{bush2}, \cite[Proposition 1.3]{SP}):
\begin{proposition}\label{l1}
Suppose $\sigma\in {\mathcal A}_n^{0}(F)$. There exists an extended maximal simple type $(\tilde{J},\tilde{\lambda})$ with associated principal stratum $[\a,k,0,\beta]$ , $k\geq 0$, as in \S\ref{T}, satisfying 
\[
\sigma\cong \mbox{c-}\mathrm{Ind}_{\tilde{J}}^{G}\tilde{\lambda} \quad \text{and} \quad \mathrm{Hom}_{U\cap\tilde{J}}(\psi,\tilde{\lambda})\neq 0.
\]
The conclusion also remains valid if we replace $(\tilde{J},\tilde{\lambda})$ with the associated $(K(\a),\tilde{\rho})$. Moreover, such a pair $(\tilde{J},\tilde{\lambda})$ is determined up to conjugation by $u\in U$. 
 \end{proposition}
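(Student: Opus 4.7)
The plan is to combine Proposition~\ref{prop-BK} with the observation, already alluded to in the paragraph preceding Proposition~\ref{Whitt}, that the Whittaker functional on $\sigma$ pins down a unique $(U,\tilde{J})$-double coset in $G$. Proposition~\ref{prop-BK} yields an extended maximal simple type, unique up to $G$-conjugacy; the remaining task is to conjugate it so that this distinguished double coset becomes the trivial one.

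To carry this out, first I would pick any extended maximal simple type $(\tilde{J}_0,\tilde{\lambda}_0)$ realizing $\sigma\cong \mbox{c-Ind}_{\tilde{J}_0}^{G}\tilde{\lambda}_0$, provided by Proposition~\ref{prop-BK}(b). Since $\sigma$ is generic, the isomorphism \eqref{eqn-W} forces ${\mathcal H}(G,\tilde{\lambda}_0,\psi)$ to be one-dimensional. A standard Mackey decomposition
\[
{\mathcal H}(G,\tilde{\lambda}_0,\psi)\cong \bigoplus_{x\in U\backslash G/\tilde{J}_0} \mathrm{Hom}_{x^{-1}Ux\cap \tilde{J}_0}(\psi^{x},\tilde{\lambda}_0)
\]
then singles out a unique double coset $Ux_0\tilde{J}_0$ that supports a nonzero functional. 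Setting $\tilde{J}:=x_0\tilde{J}_0 x_0^{-1}$ and $\tilde{\lambda}:=\tilde{\lambda}_0^{x_0^{-1}}$ preserves the isomorphism $\sigma\cong \mbox{c-Ind}_{\tilde{J}}^{G}\tilde{\lambda}$ and, by construction, arranges $\mathrm{Hom}_{U\cap \tilde{J}}(\psi,\tilde{\lambda})\neq 0$. The parallel statement for $(K(\a),\tilde{\rho})$ will then follow from transitivity of compact induction $\sigma\cong \mbox{c-Ind}_{K(\a)}^{G}\tilde{\rho}$ and Frobenius reciprocity applied to $\tilde{\rho}=\mathrm{Ind}_{\tilde{J}}^{K(\a)}\tilde{\lambda}$, the trivial double coset contributing the nonzero space $\mathrm{Hom}_{U\cap\tilde{J}}(\psi,\tilde{\lambda})$ to $\mathrm{Hom}_{U\cap K(\a)}(\psi,\tilde{\rho})$.

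For the uniqueness clause, given two pairs $(\tilde{J}_i,\tilde{\lambda}_i)$, $i=1,2$, satisfying both conclusions, Proposition~\ref{prop-BK}(a) will furnish $g\in G$ with $(\tilde{J}_2,\tilde{\lambda}_2)\cong (g\tilde{J}_1 g^{-1},\tilde{\lambda}_1^{g^{-1}})$. The hypothesis that $\tilde{\lambda}_2$ contains $\psi$ on $U\cap \tilde{J}_2$ translates, by conjugation by $g$, into a nonzero element of ${\mathcal H}(G,\tilde{\lambda}_1,\psi)$ supported on $Ug^{-1}\tilde{J}_1$. Since the trivial double coset $U\tilde{J}_1$ also supports such a functional by hypothesis on $(\tilde{J}_1,\tilde{\lambda}_1)$, the uniqueness of the supporting double coset established above forces $g^{-1}\in U\tilde{J}_1$. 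Writing $g=uj$ with $u\in U$ and $j\in \tilde{J}_1$, and using that $\tilde{J}_1$-conjugation leaves $(\tilde{J}_1,\tilde{\lambda}_1)$ unchanged up to isomorphism, the pair $(\tilde{J}_2,\tilde{\lambda}_2)$ is $U$-conjugate to $(\tilde{J}_1,\tilde{\lambda}_1)$, as asserted.

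I expect the main obstacle to be purely notational: one must track the left-versus-right conjugation conventions carefully, so that the identifications $\psi^{x}(u)=\psi(xux^{-1})$, $\tilde{\lambda}^{x^{-1}}(g)=\tilde{\lambda}(xgx^{-1})$, and ``support on the trivial double coset after conjugating by $x_0$'' are mutually consistent. Beyond this bookkeeping, the argument is entirely formal, resting on Proposition~\ref{prop-BK}, the one-dimensionality of the Whittaker space, and the isomorphism \eqref{eqn-W}.
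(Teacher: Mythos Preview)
Your proposal is correct and follows essentially the same route as the paper, which does not give a standalone proof but simply points back to the discussion preceding Proposition~\ref{Whitt} (the conjugation-by-$x$ paragraph) together with references to \cite[Proposition~1.6]{bush2} and \cite[Proposition~1.3]{SP}. Your write-up is in fact more detailed than what the paper provides: you spell out the Mackey decomposition explicitly, deduce the $(K(\a),\tilde{\rho})$ case via Frobenius reciprocity for $\tilde{\rho}=\mathrm{Ind}_{\tilde{J}}^{K(\a)}\tilde{\lambda}$, and give the uniqueness argument in full. One small slip in bookkeeping: the coset supporting the functional coming from $(\tilde{J}_2,\tilde{\lambda}_2)$ is $Ug\tilde{J}_1$ rather than $Ug^{-1}\tilde{J}_1$, so the conclusion should read $g\in U\tilde{J}_1$ (which is consistent with your subsequent line $g=uj$); you already anticipated exactly this kind of sign issue in your final paragraph.
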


Fix an extended maximal simple type $(\tilde{J},\tilde{\lambda})$ given by the above proposition and 
let $\theta\in {\mathcal C}(\a,\beta)$ be the simple character of $H^1=H^1(\beta,\a)$ attached to $\tilde{\lambda}$.  The fact that $\tilde{\lambda}\supset \psi$ implies that 
\[
\psi(x)=\theta(x), x\in U\cap H^{1}.
\]
Define the character $\Psi: (J(\beta,\a)\cap U)H^{1}\longrightarrow \mathbb{C}^{\times}$ as in \cite[Definition 4.2]{SP} via 
\[
\Psi(uh):=\psi(u)\theta(h)
\]
which is well-defined since $J$ normalizes $\theta$. The character $\Psi$ occurs in $\tilde{\lambda}$ (and also in $\sigma$) with multiplicity one. Applying Proposition~\ref{Whitt} to the pair $(\K,\Lambda)=(\tilde{J},\tilde{\lambda})$, let ${\mathcal W}_{\sigma}\in {\mathcal W}(\sigma,\psi)$ denote the resulting Whittaker function. 

The main thrust of \cite[Section 5]{SP} is that ${\mathcal W}_{\sigma}$ can be realized in terms of the {\it Bessel function} which reveals additional properties of ${\mathcal W}_{\sigma}$ that are crucial for computation of the Rankin-Selberg local factors. Namely, put ${\mathcal U}=(U\cap J(\beta,\a))H^{1}$, ${\mathcal M}=(P^{1}\cap J(\beta,\a))J^{1}$, and ${\mathcal K}=\tilde{J}$, where $P^{1}$ is the mirabolic subgroup of $G$ consisting of matrices whose last row is $e_n:=(0,0,\ldots,0,1)$ and $J^{1}=J^{1}(\beta,\a)$. Thus we have the data ${\mathcal U}\subset {\mathcal M}\subset {\mathcal K}$ along with the representation $\tilde{\lambda}$ of ${\mathcal K}$ and the character $\Psi$ of ${\mathcal U}$ satisfying  \cite[Theorem 4.4]{SP}:
\begin{itemize}
\item $\tilde{\lambda}|_{\mathcal M}$ is irreducible,
\item $\tilde{\lambda}|_{\mathcal M}\cong \mbox{Ind}_{\mathcal U}^{\mathcal M}\Psi$.
\end{itemize} 

Attached to this data is the Bessel function ${\mathcal J}={\mathcal J}_{\tilde{\lambda}}: {\mathcal K}\longrightarrow \mathbb{C}$ having the following properties (cf. \cite[Proposition 5.3]{SP}):
\begin{enumerate}[label=$(\mathrm{\roman*})$]
\item\label{Whittaker1} ${\mathcal J}(1)=1$;
\item\label{Whittaker2} ${\mathcal J}(hg)={\mathcal J}(gh)=\Psi(h){\mathcal J}(g)$ for all $h\in {\mathcal U}, g\in {\mathcal K}$;
\item\label{Whittaker3} if ${\mathcal J}(g)\neq 0$, then $g$ intertwines $\Psi$. In particular, for $m\in {\mathcal M}$, ${\mathcal J}(m)\neq 0$ if and only if $m\in {\mathcal U}$; 
\item\label{Whittaker4} for all $g_1,g_2\in {\mathcal K}$, we have 
\[
\sum_{{\mathcal M}/{\mathcal U}}{\mathcal J}(g_1m){\mathcal J}(m^{-1}g_2)={\mathcal J}(g_1g_2).
\]
\end{enumerate}    
By \cite[Proposition 5.7]{SP}, we have 
\[
{\mathcal W}_{\sigma}(g)={\mathcal J}(g), g\in \tilde{J},
\] 
and consequently it follows from \ref{Whittaker3} that, for $g\in P$, 
\[
{\mathcal W}_{\sigma}(g)\neq 0\implies g\in {\mathcal U}.
\] 
\subsection{A numerical invariant}\label{sec-ni}
For $\sigma\in{\mathcal A}^{0}_n(F)$, choose an extended maximal simple type ($\tilde{J}(\beta,\a),\tilde{\lambda})$ with $\tilde{\lambda}\supset \psi$ as in Proposition~\ref{l1}. Recall $A=M_{n\times n}(F)$ and $E=F[\beta]$. Let $\{e_i : 1\leq i\leq n\}$ be the standard row basis of $F^n$. Let $\phi: A\longrightarrow \mathbb{C}^{\times}$ denote the additive character given by 
\[
\phi(X)=\psi(e_nX\prescript{t}{}e_1), X\in A.
\]
Let $m$ be the integer so that  $\phi$ is trivial on $\p^{m+1}$ but not on $\p^m$, we call this the conductor of $\phi$ with respect to the Jacobson radical $\p=\p_{\a}$. If $\nu\in E^{\times}/(1+\p_E)$ denotes the ``numerical invariant'' defined in \cite[Definition 6.1]{SP}, then $m=\dfrac{n\cdot{\rm ord}_E(\nu)}{d}$ according to \cite[Lemma 7.5]{SP}. Also, for any integer $r\in \Z$, let $\phi_r: A\rightarrow \mathbb{C}^{\times}$ denote the additive character $\phi_r(X)=\phi(\varpi_E^{r}X)$; it has conductor $m-r$, i.e., non-trivial on $\p^{m-r}$, but trivial on $\mathfrak{p}^{m-r+1}$. 
\section{Covers}
\label{cov}
\subsection{Some generalities}
\label{gen}
The notion of {\it covers} is a general theory that gives a module theoretic interpretation of {\it{parabolic induction}} and {\it{Jacquet restriction}} in representation theory of $p$-adic groups. We refer the reader to \cite[\S8]{BK} for the foundational aspects of this theory. Suppose $G$ is the group of $F$-points of some connected reductive group defined over $F$ and $P$ is the $F$-points of a parabolic subgroup of $G$. Let $P=LN$ be a Levi decomposition with $L$ the $F$-points of a Levi subgroup of $P$ and $N$ the $F$-points of the unipotent radical of $P$.
Let $X(L)$ denote the group of {\it unramified quasicharacters} of $L$, i.e., continuous homomorphisms $L\longrightarrow \mathbb{C}^{\times}$ that are trivial on all compact subgroups of $L$. 
Let ${\mathfrak R}(G)$ denote the category of smooth complex representations of $G$. 

By a cuspidal pair in $G$ we mean a pair $(L,\tau)$ in $G$, where $L$ is as above and $\tau$ is a supercuspidal representation of $L$. Two such pairs $(L_i,\tau_i), i=1,2,$ are said to be {\it inertially equivalent} if there exists a $g\in G$ and $\chi\in X(L)$ such that 
\[
L_2=L_1^{g}=g^{-1}L_1g\mbox{ and }\sigma_2\cong \sigma_1^{g}\otimes\chi,
\]
where $\sigma_1^g$ is the representation $x\mapsto \sigma_1(gxg^{-1})$ of $L_2$. We write $[L,\tau]$ to denote the $G$-inertial equivalence class of a cuspidal pair $(L,\tau)$ in $G$ and let ${\mathcal B}(G)$ denote the set of inertial equivalence classes in $G$. For each ${\mathfrak s}\in {\mathcal B}(G)$, we have a full subcategory ${\mathfrak R}^{\mathfrak s}(G)$ of ${\mathfrak R}(G)$ defined as follows: a smooth representation $\pi'$ belongs to ${\mathfrak R}^{\mathfrak s}(G)$ if and only if each irreducible subquotient $\pi$ of $\pi'$ has {\it inertial support} ${\mathfrak s}$ (cf. \cite[Definition 1.1]{BK}). 

A pair $(J,\lambda)$, where $J$ is a compact open subgroup of $G$ and $(\lambda,W)$ is a smooth irreducible representation of $J$, is said to be a {\it{${\mathfrak s}$-type}} if the following holds: For every irreducible object $(\pi,V)\in {\mathfrak R}(G)$, $(\pi,V)$ belongs to ${\mathfrak R}^{\mathfrak s}(G)$ if and only if $\pi$ contains $\lambda$, i.e., the space $V_{\lambda}:=\text{Hom}_J(W,V)\neq 0$. Let $(\check{\lambda},\check{W})$ denote the contragredient of $(\lambda,W)$, we define ${\mathcal H}(G,\lambda)$ as the space of compactly supported functions $f:G\longrightarrow \text{End}_{\mathbb{C}}(\check{W})$ that satisfy 
\[
f(hxk)=\check{\lambda}(h)f(x)\check{\lambda}(k), x\in G, h,k\in J.
\]
It is unital (associative) algebra under the standard convolution operation 
\[
f_1\star f_2(g)=\int\limits_{G}f_1(x)f_2(x^{-1}g)dx, \mbox{ with }f_1,f_2 \in {\mathcal H}(G,\lambda).
\] 
One can similarly define the algebra ${\mathcal H}(G,\check{\lambda})$. There is a canonical anti-isomorphism $f\mapsto \check{f}$ from ${\mathcal H}(G,\lambda)\rightarrow {\mathcal H}(G,\check{\lambda})$ given by $\check{f}(g)=(f(g^{-1}))^{\vee}$. For $a\in \mbox{End}_{\mathbb C}(\check{W})$, $a^{\vee}$ denotes the transpose of $a$ with respect to the canonical pairing between $W$ and $\check{W}$. The space $V_{\lambda}$ of $\lambda$-coinvariants then has a natural left ${\mathcal H}(G,\lambda)$-module structure (also denoted as $\pi$) given by 
\[
\pi(f)\phi(w)= \int\limits_{G}\pi(g)\phi(f(g)^{\vee}w)dg; f\in {\mathcal H}(G,\lambda),\phi\in V_{\lambda}, w\in W.
\]
Then the map $V\mapsto V_{\lambda}$ is an equivalence of categories ${\mathfrak R}^{\mathfrak s}(G)\cong {\mathcal H}(G,\lambda)-\text{Mod}$.   

We write $\iota_P^G$ to denote the functor of {\it normalized parabolic induction}. For any smooth representation $\sigma$ of $L$, let ${\mathcal F}_P(\sigma)$ denote the space of ${\iota}_P^G(\sigma)$. 

Let $J_{L}$ be a compact open subgroup of $L$ and $\lambda_{L}$ an irreducible smooth representation of $J_{L}$. A $G$-cover of $(J_{L},\lambda_{L})$ is a pair $(J,\lambda)$, where $J$ is a compact open subgroup of $G$ and $\lambda$ is a smooth irreducible representation of $J$, satisfying certain properties. We refer the reader to \cite[Definition 8.1]{BK} for the precise definition of a $G$-cover. 
Suppose $(J_{L},\lambda_{L})$ is ${\mathfrak t}$-type for ${\mathfrak t}\in{\mathcal B}(L)$. Let ${\mathfrak s}\in{\mathcal B}(G)$ be the corresponding element determined by ${\mathfrak t}$. We recall certain important properties of a $G$-cover $(J,\lambda)$:
\begin{enumerate}
\item[(a)] Let $\overline{P}=L\overline{N}$ be the parabolic subgroup opposite to $P$. Then 
\[
J=J\cap N\cdot J\cap L\cdot J\cap\overline{N},\text{ and }J\cap L=J_L.
\]
The representation $\lambda$ is trivial on $J\cap N$ and $J\cap\overline{N}$, while $\lambda|_{J_L}\cong \lambda_L$.
\item[(b)] The pair $(J,\lambda)$ is an $\mathfrak s$-type in $G$. (cf. \cite[Theorem 8.3]{BK}.)
\item[(c)] There is a canonical injective algebra homomorphism $j_P: {\mathcal H}(L,\lambda_L)\longrightarrow {\mathcal H}(G,\lambda)$ which preserves support of functions and realizes the induction functor ${\iota}_P^G$ under the above said equivalence of categories. (cf. \cite[Corollary 8.4]{BK}.)
\item[(d)] $(J,\lambda)$ is said to be a {\it split cover} if, for every choice of parabolic subgroup $P$ with Levi $L$, the map $j_P$ is an isomorphism of algebras. 
\end{enumerate} 
We note the following lemma whose proof is clear from definitions. 
\begin{lemma}\label{lemma-conj}
Suppose $x\in M$ normalizes the pair $(J_L,\lambda_L)$ and $(J,\lambda)$ is a G-cover of $(J_L,\lambda_L)$. Then the conjugate pair $(J^{x},\lambda^{x})$ is also a $G$-cover of $(J_L,\lambda_L)$. (Here $J^x=xJx^{-1}$ and $\lambda^x(y)=\lambda(x^{-1}yx)$, $y\in J^x$.)
\end{lemma}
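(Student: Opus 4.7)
The plan is to verify, one by one, the axioms defining a $G$-cover for the conjugate pair $(J^x,\lambda^x)$, using the corresponding properties for $(J,\lambda)$ together with the normalization hypothesis on $x$. Since the lemma is asserted to follow clearly from definitions, each step is expected to be short; the purpose of the sketch is mainly to record which features of conjugation by an element of the Levi are actually being used. Throughout I will read $M$ as $L$, so that conjugation by $x$ stabilizes each of $L$, $N$, and $\overline{N}$ individually.

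First I will check the decomposition property. Conjugating the Iwahori decomposition $J = (J \cap \overline{N})(J \cap L)(J \cap N)$ by $x$, and using that $x \in L$ normalizes each of the three factor groups, yields $J^x = (J^x \cap \overline{N})(J^x \cap L)(J^x \cap N)$, with $J^x \cap N = x(J \cap N)x^{-1}$ and similarly for the other two intersections. For the identification with $J_L$, the normalization hypothesis gives $J^x \cap L = x(J \cap L)x^{-1} = xJ_L x^{-1} = J_L$.

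Next I will handle the restriction properties of $\lambda^x$. If $y \in J^x \cap N$, then $x^{-1}yx \in J \cap N$, on which $\lambda$ is trivial, so $\lambda^x(y) = \lambda(x^{-1}yx) = 1$; the same argument handles $J^x \cap \overline{N}$. On $J_L$, the restriction $\lambda^x|_{J_L}$ is by definition the twist $\lambda_L^{x}$, which is isomorphic to $\lambda_L$ precisely because $x$ normalizes the pair $(J_L, \lambda_L)$.

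Finally, I will transfer the strongly-positive central-element condition. The cover axiom supplies, for each parabolic with Levi $L$, an element $z$ in the center of $L$ that is strongly positive relative to $J$ and the chosen parabolic, together with an invertible element of $\mathcal{H}(G,\lambda)$ supported on $JzJ$. Because $x \in L$ fixes every element of the center $Z(L)$, strong positivity of $z$ relative to $J^x$ is immediate, and the canonical support-preserving algebra isomorphism $\mathcal{H}(G,\lambda) \xrightarrow{\sim} \mathcal{H}(G,\lambda^x)$ induced by conjugation by $x$ carries the invertible element to an invertible element of $\mathcal{H}(G,\lambda^x)$ supported on $J^x z J^x$. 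The only step that takes any real attention is this last one, but it is essentially formal, so I anticipate no genuine obstacle.
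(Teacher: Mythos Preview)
Your proposal is correct and is exactly the verification the paper has in mind: the paper gives no argument beyond the remark that the lemma is ``clear from definitions,'' and your step-by-step check of the Iwahori decomposition, the restriction of $\lambda^x$, and the transport of the strongly positive element and its invertible Hecke element via the conjugation isomorphism $\mathcal{H}(G,\lambda)\cong\mathcal{H}(G,\lambda^x)$ is precisely that routine unpacking.
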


In this paper, we are concerned with covers for the {\it general linear group} $G$. Suppose $L=\prod_{i=1}^{k}GL_{n_i}(F)$ is a product of general linear groups and $\sigma=\otimes _{i=1}^{k}\pi_i$, where $\pi_i$ is an irreducible supercuspidal representation of $GL_{n_i}(F)$. By \S\ref{T} each $\pi_i$ contains an extended maximal simple type $(\tilde{J}_i,\tilde{\lambda}_i)$. Put 
\begin{equation}\label{ty}
\tilde{J}_L=\prod_{i=1}^{k} \tilde{J}_i \mbox{ and }\tilde{\lambda}_L=\otimes_{i=1}^{k}\tilde{\lambda}_i,
\end{equation}
then $\sigma\cong\text{c-Ind }\tilde{\lambda}_L$. Let ${\mathfrak t}$ denote the $L$-inertial equivalence class of the pair $(L,\sigma)$. The associated pair $(J_L,\lambda_L)$ is a ${\mathfrak t}$-type. The existence of a $G$-cover of $(J_L,\lambda_L)$ is shown in \cite[\S1.5, Theorem]{Kut2}. The $G$-normalizer $N_G(L)$ of $L$ acts on ${\mathcal B}(L)$ by conjugation. We have the following result of Bushnell and Kutzko regarding the presence of a split cover:

 \begin{proposition}\cite[\S1.5, Theorem]{BK2}\label{split}
If the $N_G(L)/L$-stabilizer of ${\mathfrak t}$ is trivial, then any $G$-cover $(J,\lambda)$ of $(J_L,\lambda_L)$ splits. \end{proposition}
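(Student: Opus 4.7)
The plan is to show that under the stated hypothesis every element of $G$ supporting a nonzero function in $\mathcal{H}(G,\lambda)$ already lies in $JLJ$. By item~(c) of \S\ref{gen}, the map $j_P$ is an injective algebra homomorphism whose image consists precisely of the functions in $\mathcal{H}(G,\lambda)$ supported on $JLJ$, so once the supports are confined to $JLJ$ the surjectivity of $j_P$ is immediate. The same argument will apply with $P$ replaced by the opposite parabolic $\overline{P}$, yielding the split-cover conclusion.

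First I would reduce the problem to the intertwining set of $\lambda$. A function $\phi\in\mathcal{H}(G,\lambda)$ can only be supported at $g\in G$ if $g$ intertwines $\lambda$, i.e.\ $\mathrm{Hom}_{J\cap gJg^{-1}}(\lambda^{g},\lambda)\neq 0$. Exploiting the Iwahori-type factorisation $J=(J\cap N)\cdot J_{L}\cdot (J\cap\overline{N})$ supplied by the definition of a cover, together with the fact that $\lambda$ is trivial on the two unipotent factors, one argues via a Bruhat-type decomposition $G=\bigsqcup_{w}PwP$ (with $w$ running over coset representatives for $N_{G}(L)/L$) that every such $g$ can, after multiplication on both sides by elements of $J$, be represented by an element of $N_{G}(L)$. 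This reduction is the technical heart of the argument and relies crucially on the symmetric role of $\lambda$ with respect to $P$ and $\overline{P}$.

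Having reduced to $g\in N_{G}(L)$, the identification $\lambda|_{J_{L}}\cong\lambda_{L}$ together with the intertwining relation forces $\lambda_{L}^{g}$ to $L$-intertwine with $\lambda_{L}$. Since $(J_{L},\lambda_{L})$ is a $\mathfrak{t}$-type, any irreducible representation of $L$ containing $\lambda_{L}^{g}$ lies in the inertial class $\mathfrak{t}^{g}$, and therefore $\mathfrak{t}^{g}=\mathfrak{t}$; equivalently, the image of $g$ in $N_{G}(L)/L$ lies in the stabilizer of $\mathfrak{t}$. By hypothesis this stabilizer is trivial, so $g\in L$ and consequently $g\in JLJ$, giving the required containment of supports. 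The principal obstacle is the Bruhat-type reduction in the second paragraph, which has to be carried out by carefully balancing the contributions from $N$ and $\overline{N}$ on which $\lambda$ is trivial; once that step is in place, the passage to inertial classes and the appeal to the stabilizer hypothesis are essentially formal.
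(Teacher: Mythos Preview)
The paper does not give its own proof of this proposition; it is stated purely as a citation of \cite[\S1.5, Theorem]{BK2}, so there is no in-paper argument to compare against.

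Your overall strategy---show that every intertwiner of $\lambda$ lies in $JLJ$, forcing $j_P$ to be surjective---is the right one. The gap is in your second paragraph. The Bruhat decomposition $G=\bigsqcup_w PwP$ is indexed by $W_L\backslash W/W_L$, not by coset representatives for $N_G(L)/L$; a generic representative $w$ does not normalise $L$, so the decomposition by itself does not place an intertwiner of $\lambda$ into $J\cdot N_G(L)\cdot J$. You flag this reduction as the ``technical heart'' and the ``principal obstacle'' but do not carry it out, and it cannot be done with only the Iwahori factorisation and the triviality of $\lambda$ on $J\cap N$ and $J\cap\overline{N}$: one needs the machinery of strongly $(P,J)$-positive elements and the intertwining results for covers developed in \cite{BK}. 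What \cite{BK2} actually establishes is a structure theorem expressing $\mathcal{H}(G,\lambda)$ as generated over $j_P\bigl(\mathcal{H}(L,\lambda_L)\bigr)$ by operators indexed by the stabiliser of $\mathfrak{t}$ in $N_G(L)/L$; the trivial-stabiliser hypothesis then makes that extra generating set empty. Your final paragraph---passing from $g\in N_G(L)$ to $\mathfrak{t}^g=\mathfrak{t}$ via the type property---is correct and is indeed part of the argument in \cite{BK2}, but it sits downstream of the hard step you have not supplied.
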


\subsection{Covers in the homogeneous case \cite[\S7]{Kut2}}
\label{c}
Consider $G=GL_{2n}(F)$ and let $B=TU$ denote the $F$-points of the standard Borel subgroup of upper triangular matrices. Let $L=GL_n(F)\times GL_n(F)$, the $F$-points of a maximal Levi subgroup of $G$. To avoid confusion, we write $U_n(F)$ to denote the $F$-points of the upper maximal unipotent radical of $GL_n(F)$. In particular $U\cap L=U_n(F)\times U_n(F)$. Let ${\mathcal A}={\rm End}_F(F^{2n})$ which we identify with $M_{2n}(F)$ after fixing a basis and regard the Levi subgroup $L$ as the stabilizer of a decomposition of $V=F^{2n}$ of the form $V=V_1\oplus V_2$. For convenience, we write $G_i=GL_n(F), i=1,2$. Let $\pi_1,\pi_2$ be irreducible supercuspidal representations of $GL_n(F)$ associated with the same endo-class. Let $\sigma=\pi_1\times \pi_2$ denote the corresponding irreducible supercuspidal representation of $L$. We use the notation of \S\ref{T} by appending subscripts $i$, if
necessary. For instance, we have
\begin{equation}\label{eq1}
\pi_i\cong \mbox{c-Ind}_{\tilde{J}_i}^{GL_n(F)}\tilde{\lambda}_i, i=1,2, 
\end{equation}
where $(\tilde{J}_{i},\tilde{\lambda}_{i})$ is an extended maximal simple type contained in $\pi_i$ and satisfying 
\begin{equation}\label{eq2}
\mbox{Hom}_{U\cap J_i}(\psi,\lambda_i)\neq 0
\end{equation}
as in Proposition~\ref{l1}, where $\lambda_i$ denotes the restriction $\tilde{\lambda}_i|_{J_i}$. 

Due to our assumption on the endo-class, the maximal simple types $(J_i,\lambda_i)$ may be chosen so that they are both associated to a common simple stratum $[\a,k,0,\beta]$ with the same underlying simple character $\theta$ (cf. \cite{DnS}). Whence $J_1=J_2=J(\beta,\a)$, and 
\begin{equation}\label{betaext}
\lambda_i=\kappa\otimes\tau_i, i=1,2, 
\end{equation}
where $\tau_i$ is the lift of a cuspidal representation of $J(\beta,\a)/J^{1}(\beta,\a)\cong GL_{n/d}(k_E)$ with $E=F[\beta]$. Put 
\[
{\sf w}_{0}=\left(\begin{matrix}& I_n\\I_n&\end{matrix}\right),
\]
a representative for a ``certain unique" Weyl group element in $G$. (See \S\ref{LS} below for a precise definition.) Then ${\sf w}_{0}(\sigma)=\pi_2\times\pi_1$.
Let $(J_L, \lambda_L)$ be the corresponding ${\mathfrak t}$-type in $L$ as in (\ref{ty}). We recall the construction of the cover $(J,\lambda)$ in this situation. 
Suppose ${\mathcal L}=\{L_r: r\in \Z\}$ is the lattice chain of period $e$ determined by the order $\a$. It determines lattice chains ${\mathcal L}_i=\{L_i^{j}\}\mbox{ in }V_i, i=1,2$ under the natural identification of $V_i$ with $F^n$. We concatenate these lattice chains together to get a chain of period $2e$ in the sense of \cite[\S 2.8]{Kut2}:
\[
\cdots\supset L_{1}^{0}\oplus L_{2}^{0}\supset L^{0}_{1}\oplus L^{1}_{2}\supset L^{1}_{1}\oplus L^{1}_{2}\supset L^{1}_{1}\oplus L^{2}_{2} \supset \cdots.
\]
This defines a hereditary $\o_F$-order $\a'$ in ${\mathcal A}$ which in (block) matrix form is given by 
\[
\a'=\begin{pmatrix}\a&\a\\\p&\a\end{pmatrix}
\] 
and whose Jacobson radical is 
\[
\p'=\begin{pmatrix}\p&\a\\\p&\p\end{pmatrix}.
\]
We embed $E$ in ${\mathcal A}$ via the map $x \mapsto \begin{pmatrix} x & \\ & x \end{pmatrix}$ and let $\beta'$ denote the image of $\beta$ under this map. Let ${\mathcal B}$ denote the centralizer of $\beta'$ in ${\mathcal A}$ which is isomorphic to $M_{2n/d}(E)$; put $\mathfrak{b}'=\mathfrak{a}'\cap {\mathcal B}$. Thus we obtain a simple stratum $[\a',2k,0,\beta']$ in ${\mathcal A}$ with associated compact open subgroups $H^1(\beta',\a')\subseteq J^1(\beta',\a')\subseteq J(\beta',\a')$ as in \S\ref{T}. By choosing a suitable (ordered) $E$-basis of $V$, we may take the decomposition $V=V_1\oplus V_2$ to be a $E$-decomposition that is {\it subordinate} to the $\o_E$-order $\b'$, in the sense of \cite[Ch. 7]{Kut3}. This ensures that the groups $J(\beta',\a'), J^1(\beta',\a')$ and $H^1(\beta',\a')$ have a Iwahori decomposition with respect to $P$. As in \cite[\S7.2]{Kut2}, set 
\begin{equation}\label{jprime}
  J^{\prime}=(H^1(\beta',\mathfrak{a}') \cap \overline{N}) \times (J(\beta',\mathfrak{a}')\cap P),
\end{equation}
it is a subgroup of $J(\beta',\a')$ containing $H^1(\beta',\a')$. It admits a representation $\lambda'$ of the form $\lambda'=\kappa'\otimes\tau'$, where the restriction of $\kappa'$ to $J'\cap L=J(\beta',\a')\cap L=J(\beta,\a)\times J(\beta,\a)$ is of the form $\kappa\otimes\kappa$ and the $\tau'$ is the inflation of $\tau_1\otimes\tau_2$ to a representation of $J'$. By \cite[Theorem, \S7.2]{Kut2}, the pair $(J',\lambda')$ is a $G$-cover of $(J_L,\lambda_L)$. Fixing a uniformizer $\varpi_E$ of $E$, we have (cf. \cite[Proof of Lemma 4.4]{Modal})
\begin{equation}\label{J-prime}
 J^{\prime} \cap N=\begin{pmatrix} I_n & \varpi_E^{-1}\mathfrak{H}^1(\beta,\mathfrak{a}) \\ & I_n \end{pmatrix} ;\quad 
 J^{\prime} \cap \overline{N}=\begin{pmatrix} I_n & \\ \varpi_E\mathfrak{J}(\beta,\mathfrak{a}) & I_n \end{pmatrix}.
\end{equation}

Since $v_{\mathfrak{a}}(\varpi_E)=1$, it generates the principal ideal $\mathfrak{p}_{\a}$ of $\mathfrak{a}$. For $m\in \Z$ as in \S\ref{sec-ni}, put $x=\begin{psmallmatrix}\varpi_{E}^{m+1}I_n&\\&I_n\end{psmallmatrix}\in L$. It follows from property \ref{B-Hen3} in \S\ref{T} that $x$ normalizes $(J_L,\lambda_L)$. Hence by Lemma~\ref{lemma-conj}, we may conjugate $(J',\lambda')$ by $x$ to obtain the cover $(J'_m,\lambda'_m)$ of $(J_L,\lambda_L)$ satisfying 
\begin{equation}\label{B}
   J'_m\cap N=\begin{pmatrix} I_n & \varpi_E^{m}\mathfrak{H}^1(\beta,\mathfrak{a}) \\ & I_n \end{pmatrix} ;\quad 
   J'_m \cap \overline{N}=\begin{pmatrix} I_n & \\ \varpi^{-m}_E\mathfrak{J}(\beta,\mathfrak{a}) & I_n \end{pmatrix}.
\end{equation}

\begin{remark}\label{level0}
In the level zero case, i.e., $k=0$, we have $m=0$ and (\ref{B}) reduces to 
\[
J'_m\cap N=\begin{pmatrix} I_n & \p \\ & I_n \end{pmatrix} ;\quad 
   J'_m\cap \overline{N}=\begin{pmatrix} I_n & \\ \a & I_n \end{pmatrix},
\]
where $\a=M_{n}(\o_F)$ and $\p=\varpi_F M_{n}(\o_F)$. \end{remark}

\section{The Rankin-Selberg Theory}
\label{sec-RS}

We recall the definition of the $\gamma$-factor attached to pairs, using the formulation of Jacquet, Piatetski-Shapiro, and Shalika in \cite{JPSS3}. Let $\pi_1$ and $\pi_2$ be irreducible admissible (generic) representations of $GL_n(F)$ with associated Whittaker models $\mathcal{W}(\pi_1,\psi)$ and $\mathcal{W}(\pi_2,\overline{\psi})$, respectively. Let $\mathcal{C}_c^{\infty}(F^n)$ be the space of locally constant and compactly supported functions $\Phi : F^n \rightarrow \mathbb{C}$. For each ${\mathcal W}_1 \in \mathcal{W}(\pi_1,\psi)$, ${\mathcal W}_2 \in \mathcal{W}(\pi_2,\overline{\psi})$, and $\Phi \in \mathcal{C}_c^{\infty}(F^n)$, we associate the Rankin-Selberg {\it zeta} integral
\[
  Z(s,{\mathcal W}_1,{\mathcal W}_2,\Phi)=\int\limits_{U_n(F) \backslash GL_n(F)} {\mathcal W}_1(g){\mathcal W}_2(g) \Phi(e_ng)||\mathrm{det}(g)||^s dg,
\]
where $dg$ is a $GL_n(F)$-right invariant measure on $U_n(F) \backslash GL_n(F)$. This integral converges absolutely for $\Re(s)\gg 0$, and it defines a rational function in $\mathbb{C}(q^{-s})$. Let 
\[
  {\sf w}_n=\begin{pmatrix} &&1\vspace{-1ex} \\ &\vspace{-1ex}\iddots& \\ \vspace{-1ex}1 &&  \end{pmatrix}
\]
denote the long Weyl element in $GL_n(F)$. For any smooth representation $(\pi,V)$ of $GL_n(F)$, let $\pi^{\iota}$ denote the representation of $GL_n(F)$ on the same space $V$ given by $\pi^{\iota}(g)=\pi({^tg^{-1}})$. If $\pi$ is irreducible, it is known that $\pi^{\iota}\cong\check{\pi}$, the contragredient representation of $\pi$. If ${\mathcal W} \in \mathcal{W}(\pi,\psi)$, then $\check{\mathcal W}(g):={\mathcal W}({\sf w}_n \; {^tg^{-1}})$ belongs to ${\mathcal W}(\check{\pi},\overline{\psi})$. Let $\widehat{\Phi}$ denote the Fourier transform of $\Phi$ given by
\[
 \widehat{\Phi}(y)=\int\limits_{F^n} \Phi(x) \psi(x\; {^ty} )dx,
\]
where $dx$ is the normalized self-dual measure so that $\widehat{\widehat{\Phi}}(x)=\Phi(-x)$. There is a function $\gamma(s,\pi_1\times\pi_2,\psi)\in {\mathbb C}(q^{-s})$ such that 
\begin{equation}\label{fe}
 Z(1-s,\check{\mathcal W}_1,\check{\mathcal W}_2,\widehat{\Phi})=\omega_{\pi_2}(-1)^{n-1}\gamma(s,\pi_1 \times \pi_2,\psi) Z(s,{\mathcal W}_1,{\mathcal W}_2,\Phi)
\end{equation}
for all $\Phi\in\mathcal{C}_c^{\infty}(F^n)$. Further, the integrals $Z(s,{\mathcal W}_1,{\mathcal W}_2,\Phi)$ span a principal fractional ideal of the ring ${\mathbb C}[q^s,q^{-s}]$ containing $1$. Hence it admits a unique generator of the form $P(q^{-s})^{-1}$ where $P\in {\mathbb C}[X]$ with $P(0)=1$. By definition 
\[
L(s,\pi_1\times\pi_2)=P(q^{-s})^{-1}
\] 
and there is a monomial factor $\varepsilon(s,\pi_1\times\pi_2,\psi)$ of the form $cq^{-f(\pi_1 \times \pi_2,\psi)s}$ so that 
\[
\gamma(s,\pi_1\times\pi_2,\psi)=\frac{\varepsilon(s,\pi_1\times\pi_2,\psi)L(1-s,\check{\pi}_1\times\check{\pi}_2)}{L(s,\pi_1\times\pi_2)}.
\]
Further, the epsilon factor $\varepsilon(s,\pi\times\sigma,\psi)$ satisfies the functional equation
\begin{equation}
\label{RS-FE}
 \varepsilon(1-s,\check{\pi}_1\times\check{\pi}_2,\overline{\psi})\varepsilon(s,\pi_1\times\pi_2,\psi)=1.
\end{equation}

\subsection{The calculation of Paskunas-Stevens}\label{PS}
Here, we briefly review the proof of \cite[Theorem 7.1]{SP} and state that result (see Proposition~\ref{RS-gamma} and Proposition~\ref{RS-gamma-equal} below) in a form suited to this paper. Let $\pi_1$ and $\pi_2$ be unitary supercuspidal representations of $GL_n(F)$ associated to the same endo-class. We then have the extended maximal simple types $(\tilde{J}_i,\tilde{\lambda}_i)$, $i=1,2$, satisfying  (\ref{eq1}) and (\ref{eq2}) with 
\[
\tilde{J}_1=\tilde{J}_2=\tilde{J}(\beta,\a)=E^{\times}J(\beta,\a), E=F[\beta];
\]
and $\lambda_i=\tilde{\lambda}_i|_{J(\beta,\a)}=\kappa\otimes\tau_i, i=1,2$, as in (\ref{betaext}). For $i=1,2$, let ${\mathcal W}_i={\mathcal W}_{\pi_i}\in {\mathcal W}(\pi_i,\psi)$ and let $\check{\mathcal W}_2={\mathcal W}_{\check{\pi}_2}\in {\mathcal W}(\check{\pi}_2,\overline{\psi})$, be the Whittaker functions as in \S\ref{whitt}. Let ${\mathcal J}_i={\mathcal J}_{\tilde{\lambda}_i}$ be the corresponding Bessel function and let $\check{\mathcal J}_i$ denote the Bessel function associated to the dual  of $\tilde{\lambda}_i$. As noted in \cite{SP}, $\check{\mathcal J}_i(g)={\mathcal J}_{\tilde{\lambda}_i}(g^{-1})$, $g\in \tilde{J}(\beta,\a)$. By unitarity, it follows from Proposition~\ref{Whitt} that
\[
\check{\mathcal W}_i=\overline{{\mathcal W}_i}.
\]
By construction $\mathrm{Supp}({\mathcal W}_i) \subset U_n(F)E^{\times}J(\beta,\a)$, $i=1,2$, and 
\[
  {\mathcal W}_1(ug)=\psi(u)\mathcal{J}_1(g), \quad \overline{{\mathcal W}_2(ug)}=\psi^{-1}(u)\overline{\mathcal{J}_2(g)}, u \in U_n(F), g \in \tilde{J}(\beta,\a),
\]
and ${\mathcal W}_1(1)=\overline{{\mathcal W}_2(1)}=1$. Using the pair $({\mathcal W}_1,\check{{\mathcal W}_2})$, one may calculate the zeta integrals on either side of the functional equation (\ref{fe}) for a suitable $\Phi$. In fact, suppose $\Phi=\Phi_{0}$ is the characteristic function on the set $e_n J^1(\beta,\a)$. For any subset $X\supseteq U_n(F)$ of $GL_n(F)$, $\mathrm{vol}_{U_n}(X)$ denotes the volume of $U_n(F) \backslash X$ with respect to a Haar measure $dg$ on $U_n(F) \backslash GL_n(F)$. Also, for any lattice $L$ in $F^n$, we write ${\rm vol}_F(L)$ to denote the volume with respect to the measure $dx$. As shown in \cite[Proposition 7.2]{SP} we have 
\begin{equation}\label{prop-zeta}
  Z(s,{\mathcal W}_1,\check{\mathcal W}_2,\Phi_0)=\mathrm{vol}_{U_n}(U_n(F)H^1(\beta,\a)).
\end{equation}
After certain standard manipulations and using the normalization that the measure on $U_n(F)$ is so that ${\rm vol}(U_n(F)\cap J(\beta,\a))=1$, the corresponding integral on the dual side takes the form 
\begin{equation}\label{shell-sum}
Z(1-s,\check{{\mathcal W}_1},{\mathcal W}_2,\widehat{\Phi}_0)=\sum_{r\in \Z}S_r q_{\a}^{r(s-1)},
\end{equation}
where 
\[
S_r=\int\limits_{J(\beta,\a)}\overline{{\mathcal W}_1(\varpi_E^{r}g)}{{\mathcal W}_2(\varpi_E^{r}g)}\widehat{\Phi}_{0}(e_1\prescript{t}{}{(\varpi_E^{r}g)})dg
\]
and $q_{\a}=q^{n/e}=q_{E}^{n/d}$. 

Considering the support of $\widehat{\Phi}_{0}$ (cf. \cite[Lemma 7.7]{SP}), the sum in (\ref{shell-sum}) is effectively over $r\geq m$ with  
\[
 S_r={\rm vol}_F(e_n\p^{1+m})q^{m}_{\a}\left\{\begin{array}{ll}
  \int\limits_{J(\beta,\a)} \overline{{\mathcal J}_1(\varpi_E^rX)}{{\mathcal J}_2(\varpi_E^rX)}dX&\mbox{if }r>m,\\
  \int\limits_{J(\beta,\a)} \overline{{\mathcal J}_1(\varpi_E^rX)}{{\mathcal J}_2(\varpi_E^rX)}\phi_r(X) dX&\mbox{if }r=m.\end{array}\right.
\]
Here, $\phi_r(X)=\phi(\varpi_E^{r}X)$ is the additive character as in \S\ref{sec-ni}, and $\mathcal{J}_i$ is the Bessel function corresponding to ${\mathcal W}_i$, $i=1,2$. We consider the following two cases. 
 \subsection{The case $\tau_1\ncong \tau_2$} In this case, $\pi_1$ is not equivalent to any unramified twist of $\pi_2$, and by \cite[Lemma 7.10]{SP} we have 
\[
 \int\limits_{J(\beta,\a)} \overline{\mathcal{J}_1( \varpi_E^rX)}  \mathcal{J}_2( \varpi_E^rX) dX=0,
\]
for any $r\in {\mathbb Z}$. Consequently 
\begin{equation}\label{dual-zeta}
 Z(1-s,\check{{\mathcal W}_1},{\mathcal W}_2,\widehat{\Phi}_0)={\rm vol}_F(e_n\p^{m+1})q^{ms}_{\a} \int\limits_{J(\beta,\a)} \overline{{\mathcal W}_1(\varpi_E^mX)}{\mathcal W}_2(\varpi_E^mX)\phi_m(X) dX.
\end{equation}
Combining (\ref{dual-zeta}) and (\ref{prop-zeta}), we obtain the following:
\begin{proposition}
\label{RS-gamma}
Suppose $\tau_1\ncong \tau_2$. Then 
\[
   \gamma(s,\pi_1\times \check{\pi}_2,\psi) =\upsilon \omega_{\pi_2}(-1)^{n-1}q^{ms}_{\a}\int\limits_{J(\beta,\a)} \overline{{\mathcal W}_1(\varpi_E^{m}X)}{\mathcal W}_2(\varpi_E^{m}X) \phi_m(X) dX
\]
with $\upsilon=\dfrac{{\rm vol}_F(e_n\p^{m+1})}{{\rm vol}_{U_n}(U_n(F)H^1(\beta,\a))}$. In particular $f(\pi_1 \times \check{\pi}_2,\psi)=-(mn)/e$.
\end{proposition}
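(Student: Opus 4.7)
The plan is to apply the Rankin--Selberg functional equation \eqref{fe} to the pair $(\pi_1,\check{\pi}_2)$ using the distinguished Whittaker pair $({\mathcal W}_1,\check{\mathcal W}_2)$ and the test function $\Phi_0$ introduced in Subsection~\ref{PS}, and then read off both sides using the identities already assembled there. Because $\pi_2$ is unitary, the function $\check{\mathcal W}_2 = \overline{{\mathcal W}_2}$ genuinely lies in ${\mathcal W}(\check{\pi}_2,\overline{\psi})$, so both sides of \eqref{fe} for the pair $(\pi_1,\check{\pi}_2)$ are well-defined. Note also that $\omega_{\check{\pi}_2}(-1) = \omega_{\pi_2}(-1)$, so the central-character factor in \eqref{fe} is exactly the one appearing in the statement.

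For the ``input'' side, equation \eqref{prop-zeta} already gives
\[
Z(s,{\mathcal W}_1,\check{\mathcal W}_2,\Phi_0)={\rm vol}_{U_n}(U_n(F)H^1(\beta,\a)),
\]
which will become the denominator of $\upsilon$. For the ``dual'' side $Z(1-s,\check{\mathcal W}_1,{\mathcal W}_2,\widehat{\Phi}_0)$ I use the shell expansion \eqref{shell-sum}: the support analysis of $\widehat{\Phi}_0$ already recorded in the excerpt cuts off the sum at $r\geq m$, while the hypothesis $\tau_1\ncong\tau_2$ together with \cite[Lemma 7.10]{SP} forces the unmodulated Bessel integral over $J(\beta,\a)$ to vanish for every $r>m$. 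Hence only the shell at $r=m$ survives, and this is precisely the content of \eqref{dual-zeta}.

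Substituting \eqref{prop-zeta} and \eqref{dual-zeta} into \eqref{fe} and solving for $\gamma(s,\pi_1\times\check{\pi}_2,\psi)$ yields the claimed identity, with $\upsilon$ arising as the ratio ${\rm vol}_F(e_n\p^{m+1})/{\rm vol}_{U_n}(U_n(F)H^1(\beta,\a))$ and with the power $q_{\a}^{ms}$ appearing from the $r=m$ shell. For the conductor assertion, the hypothesis $\tau_1 \ncong \tau_2$ implies that $\pi_1$ and $\pi_2$ (hence $\pi_1$ and $\check{\pi}_2$) are not unramified twists of one another, so by the standard properties of Rankin--Selberg $L$-factors for supercuspidals, $L(s,\pi_1\times\check{\pi}_2)=L(1-s,\check{\pi}_1\times\pi_2)=1$. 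Therefore $\gamma=\varepsilon$ is a monomial of the form $c\,q^{-f(\pi_1\times\check{\pi}_2,\psi)\,s}$, and comparing exponents via $q^{ms}_{\a}=q^{mns/e}$ gives $f(\pi_1\times\check{\pi}_2,\psi)=-mn/e$.

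The only point requiring genuine care is the normalization bookkeeping that produces $\upsilon$ and the exact power $q^{ms}_{\a}$, rather than some adjacent power like $q^{m(s-1)}_{\a}$. This has to be done with the conventions fixed in Subsection~\ref{PS} --- namely ${\rm vol}(U_n(F)\cap J(\beta,\a))=1$, $dx$ self-dual with respect to $\psi$, and $q_{\a}=q^{n/e}=q_E^{n/d}$ --- but once those are pinned down, the factors assembled in \eqref{shell-sum} and \eqref{dual-zeta} combine without further manipulation.
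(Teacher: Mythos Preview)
Your proposal is correct and follows essentially the same route as the paper: the paper's argument is literally the sentence ``Combining \eqref{dual-zeta} and \eqref{prop-zeta}, we obtain the following,'' and you have simply unpacked what that combination entails via the functional equation \eqref{fe}. Your additional remark deriving the conductor value from the triviality of the $L$-factors (since $\tau_1\ncong\tau_2$ forces $\pi_1$ and $\check{\pi}_2$ to lie in distinct inertial classes) and the identity $q_{\a}^{ms}=q^{(mn/e)s}$ is the intended reading of the ``in particular'' clause, which the paper leaves implicit.
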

\subsection{The case $\tau=\tau_1\cong \tau_2$}
\label{RS-nonsplit} 
In this situation, there exists an unramified quasi-character $\chi$ of $F^{\times}$ so that $\pi_1\cong \pi_2\otimes(\chi\circ\det)$. If we write $\chi(x)=\|x\|^{s_{0}}, s_{0}\in \mathbb{C}$, then $\gamma(s,\pi_1\times\check{\pi}_2)=\gamma(s+s_{0},\pi_1\times\check{\pi}_1)$. Hence we may assume $\chi$ is trivial, i.e., $\pi=\pi_1\cong \pi_2$. It follows from \cite[(6.2.5)]{Kut3} that 
\begin{equation}
\label{L-equal-RSLS}
L(s,\pi\times \check{\pi})=(1-q_{\a}^{-s})^{-1}.
\end{equation}
Put ${\mathcal W}={\mathcal W}_1={\mathcal W}_2$ and ${\mathcal J}={\mathcal J}_1={\mathcal J}_2$. Let ${\mathcal U}\subset {\mathcal M}\subset \tilde{J}(\beta,\a)$ be as in \S\ref{whitt}. It is shown in \cite[Section 7.3]{SP} that, for $\Re(s)<1$,  
\[
\begin{split}
Z(1-s,\check{{\mathcal W}_1},{\mathcal W}_2,\widehat{\Phi}_0)=\sum_{r\geq m\in \Z}S_r q_{\a}^{r(s-1)}&={\rm vol}_{F}(e_n\p^{m+1})q_{\a}^{ms}{\rm vol}(\mathcal U)\left\{\frac{q_{\a}^s-1}{1-q_{\a}^{s-1}}\right\}\\
&={\rm vol}_{F}(e_n\p^{m+1}){\rm vol}(\mathcal U)q_{\a}^{(m+1)s}\frac{L(1-s,\pi\times\check{\pi})}{L(s,\pi\times\check{\pi})}.\\
\end{split}
\]

By analytic continuation we obtain 
\[
\gamma(s,\pi\times \check{\pi},\psi)= \upsilon {\rm vol}(\mathcal U)\omega_{\pi}(-1)^{n-1}q^{(m+1)s}_{\a}\frac{L(1-s,\pi\times\check{\pi})}{L(s,\pi\times\check{\pi})}.
\]
We apply the functional equation \eqref{RS-FE} to obtain 
\[
1=\upsilon^2{\rm vol}(\mathcal U)^2q_{\a}^{m+1}
\]
which in turn implies $\upsilon{\rm vol}(\mathcal U)=q_{\a}^{-\frac{m+1}{2}}$. 


\begin{proposition}\label{RS-gamma-equal}
For $\pi=\pi_1\cong \pi_2$ as above, we have  
\[
  \gamma(s,\pi\times \check{\pi},\psi)=\omega_{\pi}(-1)^{n-1}q_{\a}^{(m+1)(s-\frac{1}{2})}\frac{L(1-s,\pi\times\check{\pi})}{L(s,\pi\times\check{\pi})}.
 \]
 In particular $f(\pi\times\check{\pi},\psi)=-(m+1)n/e$.
\end{proposition}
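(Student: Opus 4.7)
My plan is to assemble the pieces already put together in Subsection~\ref{RS-nonsplit}. The computation of the zeta integral via the shell decomposition, combined with the explicit $L$-factor formula \eqref{L-equal-RSLS}, yields, after analytic continuation, the identity
\[
\gamma(s,\pi\times \check{\pi},\psi)= \upsilon\,{\rm vol}(\mathcal U)\,\omega_{\pi}(-1)^{n-1}\,q^{(m+1)s}_{\a}\,\frac{L(1-s,\pi\times\check{\pi})}{L(s,\pi\times\check{\pi})},
\]
in which the only unknown quantity is the positive real constant $\upsilon\,{\rm vol}(\mathcal U)$, which depends only on the type data and on the choice of Haar measures (in particular, it is independent of $s$). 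So the first step of the proof is just to record this formula.

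The second step is to fix the constant $\upsilon\,{\rm vol}(\mathcal U)$ by exploiting the local Rankin-Selberg functional equation \eqref{RS-FE}. Observe that the whole derivation leading to the displayed formula above is symmetric under dualization: the contragredient representation $\check\pi$ is again supercuspidal and unitary, its attached maximal simple type is dual to that of $\pi$, its central character is $\omega_\pi^{-1}$, and crucially, the numerical invariant $m$ and volume $\mathrm{vol}(\mathcal U)$ are unchanged under this duality. Hence the same expression holds with $(\pi,\psi)$ replaced by $(\check\pi,\bar\psi)$ and $s$ replaced by $1-s$, and substituting both into \eqref{RS-FE} makes the ratios of $L$-factors and the central character signs cancel, leaving
\[
1=[\upsilon\,{\rm vol}(\mathcal U)]^{2}\,q_{\a}^{m+1}.
\]
Taking the positive square root gives $\upsilon\,{\rm vol}(\mathcal U)=q_{\a}^{-(m+1)/2}$, exactly as asserted in the text preceding the proposition, and substituting back produces the claimed closed form for $\gamma(s,\pi\times\check{\pi},\psi)$.

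For the conductor statement, I would then rewrite the resulting formula in the standard form $\gamma=\varepsilon\cdot L(1-s,\check\pi\times\pi)/L(s,\pi\times\check{\pi})$, from which one reads off
\[
\varepsilon(s,\pi\times\check{\pi},\psi)=\omega_{\pi}(-1)^{n-1}\,q_{\a}^{(m+1)(s-\frac{1}{2})}.
\]
Since $q_{\a}=q^{n/e}$, the exponent of $q^{-s}$ in $\varepsilon$ is $-(m+1)n/e$, whence $f(\pi\times\check{\pi},\psi)=-(m+1)n/e$. The main delicate point in the plan is the symmetry-under-duality argument in the second step: one must verify that the numerical invariant $m$, the subgroup $\mathcal U$, and the volume normalizations transform appropriately when passing from $(\pi,\psi)$ to $(\check\pi,\bar\psi)$. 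This is implicit in the Paskunas-Stevens setup reviewed in Subsections~\ref{whitt} and \ref{sec-ni}, and once invoked, the proof is immediate.
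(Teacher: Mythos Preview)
Your proposal is correct and follows essentially the same approach as the paper: the paper likewise records the formula $\gamma(s,\pi\times\check\pi,\psi)=\upsilon\,{\rm vol}(\mathcal U)\,\omega_\pi(-1)^{n-1}q_{\a}^{(m+1)s}L(1-s,\pi\times\check\pi)/L(s,\pi\times\check\pi)$ from the shell computation, applies the functional equation \eqref{RS-FE} to obtain $1=\upsilon^2{\rm vol}(\mathcal U)^2 q_{\a}^{m+1}$, and takes the positive square root. Your write-up is a bit more explicit than the paper in spelling out the duality symmetry needed to invoke \eqref{RS-FE} (that $m$, $\mathcal U$, and $\upsilon$ are unchanged under $(\pi,\psi)\mapsto(\check\pi,\bar\psi)$), but this is purely expository.
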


\section{The Langlands-Shahidi Local Coefficient} 
\label{LS}
First, we recall the necessary basics of the Langlands-Shahidi method. Let $G,B,P,\ldots$ for the moment be as general as in \S\ref{cov}. Suppose $\Delta$ is a set of simple roots (restricted) in $G$, let $\Delta^L\subset\Delta$ correspond to $L$. Now, assume $P$ is maximal and let $\alpha$ be the unique simple root whose root subgroup belongs to $N$. Let $\widetilde{{\sf w}_{0}}$ be the unique Weyl group element in $G$ such that $\widetilde{{\sf w}_{0}}(\Delta^L)\subset \Delta$ while $\widetilde{{\sf w}_{0}}(\alpha)<0$. We will also assume $P$ is self-associate, i.e., $\widetilde{{\sf w}_{0}}(\Delta^L)=\Delta^L$. It is a standard fact that $X(L)$ is equipped with the structure of a complex torus. This allows us to talk about ``regular" and ``rational" functions of $\chi\in X(L)$ in a certain sense. We write $\iota_{P}^{G}$ to denote the normalized parabolic induction functor  and write ${\mathcal F}_P(\cdot)$ to denote the space of the induced representation $\iota_P^G(\cdot)$. 

For a smooth irreducible representation $\sigma$ of $L$, consider the standard intertwining operator $A(\chi,\sigma,{\sf w}_{0}): {\iota}_P^G(\sigma\otimes\chi)\longrightarrow {\iota}_P^G({\sf w}_0(\sigma\otimes\chi))$ given by 
\begin{equation}\label{eqn-int}
A(\chi,\sigma,{\sf w}_{0})f(g)=\int\limits_N f({\sf w}^{-1}_0ng)dn,\,f\in {\mathcal F}_{P}(\sigma\otimes\chi).
\end{equation}
The integral converges for $\Re(\chi)\gg 0$ and defines a rational function on a non-empty Zariski open subset of the complex torus $X(L)$.  

Now, for $\psi\in \widehat{F}$ as before, it defines a character $\psi^G$ of the maximal unipotent subgroup $U$ of $G$ as explained in \cite[Section 3]{Sha-Ramanujan}. We also pick the representative ${\sf w}_{0}$ so that it is {\it{compatible}} with $\psi^G$, this means that its restriction to $U\cap L$ has the following property:
\[
\psi^G(u)=\psi^G({\sf w}_{0}^{-1}u{\sf w}_{0}), u\in U\cap L. 
\]
Note that $\psi^G$ also determines a character of the maximal unipotent radical $U\cap L$ of $L$ via restriction which we denote as $\psi^L$. Suppose $\sigma$ is generic with respect to this $\psi^L$ and fix a non-zero $\psi^L$-Whittaker functional $\Omega^{L}$ on the space of $\sigma$. For $f\in {\mathcal F}_P(\sigma\otimes\chi)$ such that $\text{Supp}(f)\subset P{\sf w}^{-1}_0N$, define 
\[
\Omega(\chi,\sigma)(f)=\int\limits_N\Omega^L(f({\sf w}^{-1}_0 n) )\overline{\psi^G(n)}dn.
\]
It is well-known that this admits a unique extension to give a non-zero $\psi^G$-Whittaker functional $\Omega(\chi,\sigma)$ on all ${\mathcal F}_{P}(\sigma\otimes\chi)$;  further $\chi\mapsto \Omega(\chi,\sigma)$ is a holomorphic function \cite[Proposition 2.1]{cass2}. As before, for the purpose of calculations, we need \cite[Corollary 2.3]{cass2} which gives a formula for the extension $\Omega(\chi,\sigma)$ in the following sense: Given a compact open subgroup $K$ of $G$, there exists a suitably large compact open subgroup $N_{*}\subset N$ such that 
\[
\Omega(\chi,\sigma)(f)=\int\limits_{N_{*}}\Omega^L(f({\sf w}^{-1}_0 n) ) \overline{\psi^{G}(n)}dn
\]
for all $\chi$ and for all $f\in{\mathcal F}_P(\sigma\otimes\chi)^{K}$. One similarly defines the non-zero functional $\Omega({\sf w}_{0}(\chi),{\sf w}_{0}(\sigma))$ on ${\iota}_P^G({\sf w}_0(\sigma\otimes\chi))$. 

The Langlands-Shahidi local coefficient attached to $\sigma,\psi$ and ${\sf w}_{0}$ is the non-zero constant $C_{\psi}(\chi,\sigma,{\sf w}_{0})$ given by Rodier's multiplicity-one theorem, i.e., 
\begin{equation}
\label{lc}
C_{\psi}(\chi,\sigma,{\sf w}_0)(\Omega({\sf w}_{0}(\chi),{\sf w}_{0}(\sigma))\circ A(\chi,\sigma,{\sf w}_{0}))=\Omega(\chi,\sigma).
\end{equation}

\par
We return to the notation of \S\ref{c} and take $G=GL_{2n}(F), L=GL_n(F)\times GL_n(F)$. Let $\sigma=\pi_1\times\pi_2\in{\mathfrak R}(L)$ be supercuspidal with $\pi_1$ and $\pi_2$ associated to the same endo-class. We then have the extended maximal simple types $(\tilde{J}_i,\tilde{\lambda}_i)$, $i=1,2$, satisfying (\ref{eq1}) and (\ref{eq2}) with 
\[
\tilde{J}_1=\tilde{J}_2=E^{\times}J(\beta,\a);
\]
and $\lambda_i=\kappa\otimes\tau_i, i=1,2$, as in (\ref{betaext}). This determines a ${\mathfrak t}$-type $(J_L,\lambda_L)$ in $L$. For $s\in \mathbb{C}$, let $\chi_s\in X(L)$ be the unramified character given by 
\[
\chi_s(g)=||\mathrm{det}(g_1)||^{s/2}\; ||\mathrm{det}(g_2)||^{-s/2}; \,g=(g_1,g_2)\in L.
\]
With ${\sf w}_0$ fixed as in \S\ref{c}, we note that it is compatible with the character $\psi^G$ of $U$ given by (\ref{psi}), and ${\sf w}_0(\chi_s)=\chi_{-s}$. We write $A(s,\sigma)$ to denote the intertwining operator $A(\chi_{s},\sigma,{\sf w}_0)$ and $C_{\psi}(s,\sigma)$ to denote the corresponding local coefficient $C_{\psi}(\chi_s,\sigma,{\sf w}_0)$. We choose unramified quasi-characters $|| \cdot||^{s_1}$ and $|| \cdot||^{s_2}$ of $F^{\times}$, $s_1, s_2 \in \mathbb{C}$ so that $\pi_1 \otimes || \cdot||^{-s_1}$ and $\pi_2 \otimes || \cdot||^{-s_2}$ are unitary. We put $\pi_1^{\circ}=\pi_1 \otimes || \cdot||^{-s_1}$ and $\pi_2^{\circ}=\pi_2 \otimes || \cdot||^{-s_2}$. One can easily check that
\begin{equation}
\label{Switch-Unitary}
  C_{\psi}(s,\pi_1 \times \pi_2)=C_{\psi}(s+s_1-s_2,\pi^{\circ}_1 \times \pi^{\circ}_2).
\end{equation}
Hence for calculation purposes, we may assume that both $\pi_1$ and $\pi_2$ are unitary.

\subsection{A note on measures}\label{sec-measures} Clearly the definition of $A(\chi,\sigma,{\sf w}_{0})$ and $\Omega(\chi,\sigma)$ involves a choice of Haar measure $dn$ on $N$. Following \cite[\S 5.2]{BHK}, for a random measure $dn$ on $N$, we always choose the measure $d\overline{n}$ on $\overline{N}$ that is dual to $dn$, relative to $\psi$. Then the measure $dn\otimes d\overline{n}$ on $N\times \overline{N}$ is independent of the initial choice of $dn$. (It only depends on $L$ and the additive character $\psi$.) Hence, for any compact open subgroup $K\leq G$, the product of volumes ${\rm vol}(K\cap N){\rm vol}(K\cap\overline{N})$ is independent of the choice of $dn$. We will exploit this fact in our calculations with $K=J'$ as in (\ref{jprime}).


\subsection{\bf The case $\tau_1\ncong \tau_2$} \label{sec:unequal}
Let $(J,\lambda)=(J'_m,\lambda'_m)$ be the cover as in (\ref{B}).
In this situation, we have  
\[
\pi_2\not\cong \pi_1\otimes(\chi\circ\det),
\] 
for any unramified character $\chi$ of $F^{\times}$, which in turn implies that the cover $(J,\lambda)$ splits (cf. Proposition~\ref{split}). We introduce certain functions in the induced representation space for later use. Let $K$ be any compact open subgroup of $\overline{N}$ and let $V_{\sigma}$ denote the space of $\sigma$. For $u\in V_{\sigma}$, consider the function $f_u=f_{u,K}\in \mathcal{F}_{P}(\sigma)$ defined as follows (cf. \cite[\S 1.2]{BHK}): $f_u$ is supported in $PK$ and 
\begin{equation}
\label{f}
f_u(xk)=\delta_{P}(x)^{1/2}\sigma(x)u, x\in P, k\in K.
\end{equation}
For any compact open subgroup $K$ of $N$, similarly define the function $f'_u=f'_{u,K}\in {\mathcal F}_{P}({\sf w}_0(\sigma))$ supported in $P{\sf w}_0K$ and given as
\begin{equation}
\label{f'}
f'_u(x{\sf w}_0k)=\delta_{P}(x)^{1/2}{\sf w}_0(\sigma)(x)u, x\in P, k\in K.
\end{equation}

\par
Let us re-write (\ref{f}) and (\ref{f'}) in the current context. Let $W_i$ denote the representation space of $\tilde{\lambda}_i$, $i=1,2$; thus the representation space $W$ of $\tilde{\lambda}_{L}$ is given by $W=W_1\otimes W_2$. Since each $\lambda_i$ contains $\psi$, it follows that $\lambda_L$ contains the character $\psi^L=\psi\times\psi$ of $U\cap L=U_n(F)\times U_n(F)$. Further, there is a canonical $\tilde{J}_i$-homomorphism from $W_i$ into the space of $\pi_i$ given by $w_i\mapsto \varphi_{w_i}$, where $\varphi_{w_i}$ is supported in $\tilde{J}_i$. Given $w_1\in W_1$ and $w_2\in W_2$, let $w=w_1\otimes w_2$ and let $\varphi_w=\varphi_{w_1}\otimes \varphi_{w_2}$. One checks that $\varphi_w$ belong to the $\lambda_L$-isotypic subspace of $\pi$. Let $f_w\in {\mathcal F}_P(\sigma\otimes\chi_s)$ and $f'_w\in {\mathcal F}_P({\sf w}_0(\sigma\otimes\chi_s))$ denote the function $f_{\varphi_w}$ and $f'_{\varphi_w}$, respectively. 

\begin{remark}
It will be helpful to relate our notation to that of \cite{BHK}. For instance, the function $f_w=\text{vol}(J\cap\overline{N})\,f_{\varphi_w}^u$, where $f_{\varphi_w}^u$ is the function defined in  \cite[\S1.3]{BHK}. On the other hand, since ${\sf w}_0\overline{P}{\sf w}_0^{-1}=P$, for any smooth representation $\tau$ of $L$, we see that the map $L({\sf w}_0): i_{\overline{P}}^G(\tau)\longrightarrow i_{P}^{G}({\sf w}_0(\tau))$, given by $f\mapsto f({\sf w}_0^{-1}\cdot)$, is a $G$-isomorphism. Thus if $f_{\varphi_w}^{l}\in i_{\overline{P}}^{G}(\sigma\otimes\chi_s)$ is as in \cite[\S1.2]{BHK}, then $f'_w=\text{vol}(J\cap N)\,L({\sf w}_0)f_{\varphi_w}^{l}$. 
\end{remark}
 \begin{proposition}\label{lemma-1}
If the $G$-cover $(J,\lambda)$ of $(J_L,\lambda_L)$ splits, then  
\[
A(s,\sigma)f_w=\mathrm{vol}(J\cap\overline{N}) f'_w \quad \text{and} \quad A(-s,{\sf w}_0(\sigma)) \circ A(s,\sigma)f_w=\mathrm{vol}(J\cap N)\mathrm{vol}(J\cap\overline{N})f_w.
\] 
\end{proposition}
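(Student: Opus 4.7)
The plan is to prove the first identity by direct computation at the base point ${\sf w}_{0}$, and then promote the resulting pointwise equality to a global one using the structural constraint that the splitting of the cover imposes on the $\lambda$-isotypic subspace. The second identity will follow by running the same argument symmetrically and composing the two.

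For the first identity, I start from the defining integral (\ref{eqn-int}) with $g = {\sf w}_{0}$ and make the change of variables $n = {\sf w}_{0}\overline{n}\,{\sf w}_{0}^{-1}$ with $\overline{n} \in \overline{N}$. By the dual-measure convention of \S\ref{sec-measures} no stray Jacobian enters, and I obtain
\[
A(s,\sigma)f_w({\sf w}_{0}) = \int_{\overline{N}} f_w(\overline{n})\, d\overline{n}.
\]
Since $\mathrm{Supp}(f_w) = P(J\cap\overline{N})$ and $P\cap\overline{N} = \{1\}$, the integrand vanishes outside $J\cap\overline{N}$, on which set (\ref{f}) gives $f_w(\overline{n}) = \varphi_w$. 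The integral thus equals $\mathrm{vol}(J\cap\overline{N})\,\varphi_w = \mathrm{vol}(J\cap\overline{N})\,f'_w({\sf w}_{0})$.

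To upgrade this pointwise equality to an equality of functions, I invoke the splitting hypothesis. Under the equivalence ${\mathfrak R}^{\mathfrak s}(G)\cong\mathcal{H}(G,\lambda)$-$\text{Mod}$, the isomorphism $j_P:\mathcal{H}(L,\lambda_L)\xrightarrow{\sim}\mathcal{H}(G,\lambda)$ identifies the $\lambda$-isotypic module of $\iota_P^G({\sf w}_{0}(\sigma\otimes\chi_s))$ with the $\lambda_L$-isotypic module of ${\sf w}_{0}(\sigma\otimes\chi_s)$ over $L$, which is one-dimensional by irreducibility of $\sigma$ and the type property of $(J_L,\lambda_L)$. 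Both $A(s,\sigma)f_w$ (by $J$-equivariance of the intertwining operator) and $f'_w$ (by its construction from $\varphi_w$) lie in this one-dimensional component, so they must be proportional, and the evaluation at ${\sf w}_{0}$ pins the scalar down to $\mathrm{vol}(J\cap\overline{N})$. The analogous direct computation for $A(-s,{\sf w}_{0}(\sigma))f'_w$ at $g = 1$---where the integration collapses onto $J\cap N$ via $\overline{P}\cap N = \{1\}$ and the identity ${\sf w}_{0}^{-1}P{\sf w}_{0} = \overline{P}$---gives $A(-s,{\sf w}_{0}(\sigma))f'_w = \mathrm{vol}(J\cap N)\,f_w$, and composing the two identities yields the claimed product of volumes.

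The main obstacle is the isotypic-rigidity step: establishing that the $\lambda$-isotypic component is one-dimensional over $\mathbb{C}$ is precisely where the splitting hypothesis is indispensable. Without splitting, $\mathcal{H}(G,\lambda)$ strictly contains $j_P(\mathcal{H}(L,\lambda_L))$, the $\lambda$-isotypic module acquires extra generators, and the proportionality between $A(s,\sigma)f_w$ and $f'_w$ ceases to hold. This is exactly the obstruction that forces the Hecke-algebra-isomorphism detour of Subsection~\ref{sec:equal} in the $\tau_1\cong\tau_2$ case.
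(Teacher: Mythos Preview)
Your overall strategy---compute $A(s,\sigma)f_w$ at the base point ${\sf w}_0$, then use one-dimensionality of the $\lambda$-isotypic subspace to promote the pointwise equality to a global one---is sound and more explicit than the paper's proof, which simply cites \cite[Proposition 2.4]{BHK} and tracks volume normalizations via the preceding Remark.

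There is, however, a genuine error in your justification of one-dimensionality. You claim that $j_P$ identifies $\iota_P^G({\sf w}_0(\sigma\otimes\chi_s))_\lambda$ with the $\lambda_L$-isotypic component of ${\sf w}_0(\sigma\otimes\chi_s)$, and that the latter is one-dimensional. In fact, in the split case $\tau_1\ncong\tau_2$, the representation ${\sf w}_0(\sigma)=\pi_2\times\pi_1$ does \emph{not} contain $\lambda_L=\lambda_1\otimes\lambda_2$: since $(J_1,\lambda_1)$ is a type for the inertial class of $\pi_1$ and $\pi_2$ lies in a different inertial class, $\mathrm{Hom}_{J_1}(\lambda_1,\pi_2)=0$. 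So $({\sf w}_0(\sigma\otimes\chi_s))_{\lambda_L}=0$, not one-dimensional, and the identification via $j_P$ you invoke does not even apply: the statement that $j_P$ realizes $\iota_P^G$ concerns representations of $L$ in the block ${\mathfrak R}^{\mathfrak t}(L)$, and ${\sf w}_0(\sigma)$ is not in that block.

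The fix is easy. Either (i) use $L({\sf w}_0):\iota_{\overline P}^G(\sigma\otimes\chi_s)\xrightarrow{\sim}\iota_P^G({\sf w}_0(\sigma\otimes\chi_s))$ and apply $j_{\overline P}$ (also an isomorphism by splitting) to identify the $\lambda$-isotypic with $\sigma_{\lambda_L}$, which is genuinely one-dimensional; or (ii) observe that splitting makes ${\mathcal H}(G,\lambda)\cong{\mathcal H}(L,\lambda_L)$ commutative, so every simple ${\mathcal H}(G,\lambda)$-module is one-dimensional, and $\iota_P^G({\sf w}_0(\sigma\otimes\chi_s))$ is irreducible for generic $s$. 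A minor separate point: the dual-measure convention of \S\ref{sec-measures} is Fourier duality, not compatibility under ${\sf w}_0$-conjugation, so your claim that ``no stray Jacobian enters'' needs a word of justification (it holds because ${\sf w}_0\,\overline n(X)\,{\sf w}_0=n(X)$ identifies the underlying $M_n(F)$'s, cf.\ the computation in the proof of Lemma~\ref{volume-conductor}).
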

\begin{proof}
This is a reformulation of \cite[Proposition 2.4]{BHK}. Namely, assuming that the measures are normalized so that $\text{vol}(J\cap N)=\text{vol}(J\cap\overline{N})=1$, it is shown in loc.cit. that $A(s,\sigma)f_{\varphi_w}^{u}=f_{\varphi_w}^{l}$. However, with the above remark in mind, this in turn implies our assertion.
\end{proof}

 We fix $\psi$-Whittaker functionals $\Omega_i$ for $\pi_i$, $i=1,2$,  as in \S\ref{whitt}; then $\Omega^{L}=\Omega_1\otimes\Omega_2$ is a $\psi^L$-Whittaker functional for $\sigma=\pi_1\times \pi_2$. Let $\Omega(s,\sigma)$ denote $\Omega({\chi_s},\sigma)$ defined with respect to $\psi^G$. Before we proceed further, writing a typical element of $N$ as $n(X)=\left(\begin{matrix}I_n&X\\&I_n\end{matrix}\right)$, we note that the character 
 \[
 X\mapsto \psi^G(n(X))=\psi(X_{n1})
 \]
 is nothing but the additive character $\phi$ of $A$ introduced in \S\ref{sec-ni}.
 
  \begin{lemma}\label{lemma-2}
Keeping the above notation, we have 
\[
\Omega(-s,{\sf w}_0(\sigma))(f'_w)=\Omega^L(\varphi_w)\mathrm{vol}(J\cap N).
\]
Further, we may choose $w$ so that $\Omega^L(\varphi_w)=1$.
\end{lemma}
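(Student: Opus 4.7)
The plan is to use the small support of $f'_w$ to reduce $\Omega(-s,{\sf w}_0(\sigma))(f'_w)$ to a concrete integral over $J\cap N$, and then exploit the definition of the numerical invariant $m$ to show that the relevant additive character is trivial there.

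First I would invoke the integral formula for Shahidi's Whittaker functional on the big Bruhat cell. Since ${\sf w}_0^{-1}={\sf w}_0$ and $f'_w$ is supported in $P{\sf w}_0(J\cap N)\subset P{\sf w}_0 N$ with compact support modulo $P$, the absolutely convergent integral
\[
\Omega(-s,{\sf w}_0(\sigma))(f'_w)=\int\limits_N \Omega^L(f'_w({\sf w}_0^{-1}n))\,\overline{\psi^G(n)}\,dn
\]
computes the functional; here I use that a $\psi^L$-Whittaker functional for $\sigma$ is automatically one for ${\sf w}_0(\sigma)$ thanks to the compatibility of ${\sf w}_0$ with $\psi^G$. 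Next I would analyze the support of the integrand: the condition ${\sf w}_0^{-1}n\in P{\sf w}_0(J\cap N)$ forces $n\in {\sf w}_0 P{\sf w}_0(J\cap N)=\overline{P}(J\cap N)$, and intersecting with $N$ together with $\overline{P}\cap N=\{1\}$ yields $n\in J\cap N$. For any such $n$ the decomposition ${\sf w}_0^{-1}n=1\cdot{\sf w}_0\cdot n$ is trivial, so that $f'_w({\sf w}_0^{-1}n)=\varphi_w$ by definition of $f'_w$. This gives
\[
\Omega(-s,{\sf w}_0(\sigma))(f'_w)=\Omega^L(\varphi_w)\int\limits_{J\cap N}\overline{\psi^G(n)}\,dn.
\]

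The crux is then the observation that $\psi^G|_{J\cap N}\equiv 1$. By (\ref{B}) an element of $J\cap N$ has the form $n(X)$ with $X\in\varpi_E^{m}\mathfrak{H}^1(\beta,\a)$, and the restriction of $\psi^G$ to $N$ is exactly the character $\phi$ of $A$ introduced in \S\ref{sec-ni}. Since $v_{\a}(\varpi_E)=1$ and $\mathfrak{H}^1(\beta,\a)\subset \p$, one has $X\in \p^{m+1}$; but by the very definition of $m$ the character $\phi$ is trivial on $\p^{m+1}$. Hence the integral equals $\mathrm{vol}(J\cap N)$, establishing the first assertion.

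For the final normalization statement, I would write $\Omega^L=\Omega_1\otimes\Omega_2$ and choose each $w_i\in W_i$ as in the proof of Proposition~\ref{Whitt}, namely the unique vector satisfying $\mu_i(w_i)=1$. Then $\Omega_i(\varphi_{w_i})={\mathcal W}_i(1)=1$ by that proposition, and taking $w=w_1\otimes w_2$ yields $\Omega^L(\varphi_w)=1$. The only potentially subtle point in the argument is justifying the Shahidi-Casselman integral formula for $f'_w$, but this is immediate because $f'_w$ has compact support modulo $P$ and lies entirely in the big Bruhat cell $P{\sf w}_0 N$.
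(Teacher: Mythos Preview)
Your proof is correct and follows essentially the same approach as the paper: both reduce the integral to $J\cap N$ using the support of $f'_w$, observe that $X\in\p^{m+1}$ there so that $\phi(X)=1$ by definition of $m$, and then pick $w=w_1\otimes w_2$ with ${\mathcal W}_i(1)=1$. Your version simply spells out a few steps (the support reduction via $\overline{P}\cap N=\{1\}$, the inclusion $\mathfrak{H}^1(\beta,\a)\subset\p$, and the compatibility of $\Omega^L$ with ${\sf w}_0(\sigma)$) that the paper leaves implicit.
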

\begin{proof}
This is easy since the function $f'_w$ is supported on $P{\sf w}_0N$. Namely,
\[
\Omega(-s,{\sf w}_0(\sigma))(f'_w) =  \int\limits_{J\cap N}\Omega^L(f'_w({\sf w}_0n)\overline{\psi^G(n)}dn=\Omega^L(\varphi_w)\int\limits_{J\cap N}\overline{\psi^G(n)}dn.
\]
Since 
\[
n=n(X)\in N\cap J\implies X\in \p^{m+1},
\]
it follows that $\psi^G(n(X))=\phi(X)=1$ for $u=n(X)\in J\cap N$. Consequently
\[
\Omega(-s,{\sf w}_0(\sigma))(f'_w)=\Omega^L(\varphi_w)\text{vol}(J\cap N).
\] 
For $i=1,2$, let ${\mathcal W}_i={\mathcal W}_{\pi_i}$ be the Whittaker function defined in \S\ref{whitt}. By construction ${\mathcal W}_i={\mathcal W}_{\varphi_{w_i}}$ for a unique $w_i\in W_i$. Then $\Omega^L(\varphi_w)={\mathcal W}_1(1){\mathcal W}_2(1)=1$ for the corresponding $w=w_1\otimes w_2$. 
\end{proof}

This brings us to the central issue of computing $\Omega(s,\sigma)(f_w)$. It is not straightforward since $f_w$, unlike $f'_{w}$, is supported near the identity element. Let us fix $w=w_1\otimes w_2$ as in the above lemma so that ${\mathcal W}_i={\mathcal W}_{\varphi_{w_{i}}}={\mathcal W}_{\pi_i}, i=1,2$. For $X\in GL_n(F)$, let $n(X)\in N$ be as in the proof above and let $\bar{n}(X)\in\overline{N}$ denote the element $\bar{n}(X)=\left(\begin{matrix}I_n&\\X&I_n\end{matrix}\right)$. 
We have 
\[
{\sf w}_{0}n(X)=\left(\begin{matrix}-X^{-1}&\\&X\end{matrix}\right)\left(\begin{matrix}I_n&-X\\&I_n\end{matrix}\right)\left(\begin{matrix}I_n&\\X^{-1}&I_n\end{matrix}\right),
\]
hence 
\[
f_w({\sf w}_0n(X))=\left\{ \begin{array}{ll}\text{det}(X)^{-s-n}(\pi_1(-X^{-1})\otimes\pi_2(X)\cdot \varphi_w)&\mbox{ if }\bar{n}(X^{-1})\in J\cap\overline{N}\\0&\mbox{otherwise.}\end{array}\right.
\]
Since
\[
   J \cap \overline{N}=\begin{pmatrix} I_n & \\ \varpi^{-m}_E\mathfrak{J}(\beta,\mathfrak{a}) & I_n \end{pmatrix},
\]
it follows that $\Omega(s,\sigma)(f_w)$ equals 
\[
\int\limits_{\left\{\substack{X\in \p^{-t}:\text{det}(X)\neq 0\\X^{-1}\in \varpi_E^{-m}{\mathfrak J}(\beta,\a)}\right\}}\|\text{det}(X)\|^{-s-n}\Omega^{L}(\pi_1(-X^{-1})\otimes\pi_2(X)\cdot \varphi_w)\overline{\phi(X)}dX
\]
for some $t\gg 0$. 

Let us write $\Omega^{L}(\pi_1(-X^{-1})\otimes\pi_2(X)\cdot \varphi_w)$ as ${\mathcal W}_1(-X^{-1}){\mathcal W}_2(X)$,
where, for $i=1,2$, ${\mathcal W}_i={\mathcal W}_{\pi_i}={\mathcal W}_{{\varphi_{w_i}}}$ as in the above lemma. Then 
\[
\begin{aligned}
\Omega(s,\sigma)(f_w)=\int\limits_{\left\{\substack{X\in \p^{-t}:\text{det}(X)\neq 0\\X^{-1}\in \varpi_E^{-m}{\mathfrak J}(\beta,\a)}\right\}}\|\text{det}(X)\|^{-s}{\mathcal W}_1(-X^{-1}){\mathcal W}_2(X)\overline{\phi(X)}d^{\times}X,
\end{aligned}
\]
where $d^{\times}X=\frac{dX}{\|\text{det}(X)\|^n}$ which is invariant for the adjoint action of $L$ on $N$. Making the change of variable $X \mapsto -X$, we obtain 
\[
\Omega(s,\sigma)(f_w)=\omega_{\pi_2}(-1)\int\limits_{ \left\{\substack{X\in \p^{-t}:\text{det}(X)\neq 0\\X^{-1}\in \varpi_E^{-m}{\mathfrak J}(\beta,\a)}\right\}}
   {\mathcal W}_1(X^{-1}){\mathcal W}_2(X) \phi(X) ||\mathrm{det}(X)||^{-s}d^{\times}X
\]
for sufficiently large $t$. Since the Whittaker functions ${\mathcal W}_i,\; i=1,2$, are both supported in $U_n(F)E^{\times}J(\beta,\a)$, it is enough to consider those $X$ which belong to the set
\[
  \bigcup_{-t \leq r \leq m} U_n(F) \varpi_E^r J(\beta,\a).
\]
For any integer $r$, define the {\it shell} $\mathcal{D}_r$ by
\[
 {\mathcal D}_r= \{ X \in U_n(F)\varpi_E^r J(\beta,\a) :  X \in \mathfrak{p}^{-t}, X^{-1} \in \varpi^{-m}_E\mathfrak{J}(\beta,\mathfrak{a}) \},
\]
and re-write the above integral as 
\[
\Omega(s,\sigma)(f_w)= \omega_{\pi_2}(-1)\sum_{-t \leq r \leq m}  q_{\a}^{rs}I_r, 
\]
where 
\[
 I_r=\int\limits_{\mathcal{D}_r}  {\mathcal W}_1(X^{-1}){\mathcal W}_2(X) \phi(X) d^{\times} X.
 \]
Using Proposition~\ref{lemma-1} and Lemma~\ref{lemma-2} we obtain 
\begin{equation}
\label{sum-shell}
\mathrm{vol}(J\cap N)  {\rm vol}(J\cap\overline{N}) C_{\psi}(s,\pi_1\times\pi_2)=\omega_{\pi_2}(-1)\sum_{-t \leq r \leq m}  q_{\a}^{rs}I_r.
\end{equation}
On the other hand \cite[Proposition 2.1(b)]{Sha10} combined with the above expression implies that $C_{\psi}(s,\pi_1\times\pi_2)\in \mathbb{C}[q^s,q^{-s}]^{\times}$, i.e., a monomial in $q^{-s}$. Therefore all but one of the integrals $I_r$ must vanish. We claim $I_{m}\neq 0$, completing the computation of the local coefficient in this case. To that end, we require the following lemma.

\begin{lemma}
\label{shell-indetification}
One has $\mathcal{D}_{m}=\varpi_E^{m}J(\beta,\a)$.
\end{lemma}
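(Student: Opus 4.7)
The plan is to prove both inclusions of the claimed equality $\mathcal{D}_m = \varpi_E^m J(\beta,\a)$ directly.

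The inclusion $\varpi_E^m J(\beta,\a) \subseteq \mathcal{D}_m$ should be straightforward. For $X = \varpi_E^m j$ with $j \in J(\beta,\a)$, the decomposition $X = I_n \cdot \varpi_E^m \cdot j$ shows $X \in U_n(F)\varpi_E^m J(\beta,\a)$. Since $v_{\a}(\varpi_E) = 1$, we have $X \in \mathfrak{p}^m$, which sits inside $\mathfrak{p}^{-t}$ for any large enough $t$. For the inverse condition, I would invoke property \ref{B-Hen3} of \S\ref{T} (more precisely, that $E^{\times}$ normalizes $J(\beta,\a)$, which follows from $E^{\times} \subset K(\a)$ and the definition of $J(\beta,\a)$): this yields $\varpi_E^m j^{-1}\varpi_E^{-m} \in J(\beta,\a) \subset \mathfrak{J}(\beta,\a)$, whence $X^{-1} = \varpi_E^{-m}\bigl(\varpi_E^m j^{-1}\varpi_E^{-m}\bigr) \in \varpi_E^{-m}\mathfrak{J}(\beta,\a)$.

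For the reverse inclusion, the plan is to take $X \in \mathcal{D}_m$, write $X = u\,\varpi_E^m\, j$ with $u \in U_n(F)$ and $j \in J(\beta,\a)$, and force $u$ into $J(\beta,\a)$ so that it can be absorbed. Explicitly,
\[
X^{-1} = j^{-1}\varpi_E^{-m}u^{-1} = \varpi_E^{-m}\bigl(\varpi_E^{m}j^{-1}\varpi_E^{-m}\bigr)u^{-1} = \varpi_E^{-m}\, j'\, u^{-1},
\]
where $j' := \varpi_E^{m}j^{-1}\varpi_E^{-m} \in J(\beta,\a)$ by the same normalization property. Since $j' \in J(\beta,\a) = \mathfrak{J}(\beta,\a)^{\times}$, the hypothesis $X^{-1} \in \varpi_E^{-m}\mathfrak{J}(\beta,\a)$ is equivalent to $u^{-1} \in \mathfrak{J}(\beta,\a)$, i.e., $u^{-1} \in U_n(F)\cap \mathfrak{J}(\beta,\a)$.

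The key technical point, and the only step requiring real argument, is the identification
\[
U_n(F)\cap \mathfrak{J}(\beta,\a) = U_n(F)\cap J(\beta,\a).
\]
One inclusion is immediate from $J(\beta,\a) = \mathfrak{J}(\beta,\a)^{\times}$. For the other, given $v = I_n + N \in \mathfrak{J}(\beta,\a)$ with $N$ strictly upper nilpotent, I would use that $\mathfrak{J}(\beta,\a)$ is an $\o_F$-order and therefore closed under multiplication, so the finite Neumann expansion $v^{-1} = \sum_{k=0}^{n-1}(-N)^k$ has all terms in $\mathfrak{J}(\beta,\a)$, placing $v$ in $\mathfrak{J}(\beta,\a)^{\times} = J(\beta,\a)$. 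Applied to $u^{-1}$, this gives $u \in J(\beta,\a)$, and then rewriting
\[
X = u\,\varpi_E^m\, j = \varpi_E^m\,\bigl(\varpi_E^{-m}u\,\varpi_E^m\bigr) j \in \varpi_E^m J(\beta,\a),
\]
again because $\varpi_E \in E^{\times}$ normalizes $J(\beta,\a)$, completes the argument. I expect the only mildly subtle point to be that last identification of $U_n(F)\cap \mathfrak{J}(\beta,\a)$ with $U_n(F)\cap J(\beta,\a)$; everything else is bookkeeping with the normalization action of $E^{\times}$ on the hereditary order data.
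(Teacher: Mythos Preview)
Your proof is correct and follows essentially the same approach as the paper's: both argue the easy inclusion directly, then for the reverse write $X = u\varpi_E^m j$, use that $\varpi_E$ normalizes $J(\beta,\a)$, and deduce from $X^{-1}\in\varpi_E^{-m}\mathfrak{J}(\beta,\a)$ that $u^{-1}\in J(\beta,\a)$. Your Neumann-series justification that a unipotent element of the order $\mathfrak{J}(\beta,\a)$ is automatically a unit is a detail the paper simply asserts; otherwise the arguments are the same.
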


\begin{proof}
We will use the fact that $\varpi_E$ normalizes the $\o_F$- order ${\mathfrak J}(\beta,\a)$ and hence its unit group $J(\beta,\a)={\mathfrak J}(\beta,\a)^{\times}$. It is clear that $\varpi_E^{m}J(\beta,\a) \subset \mathcal{D}_{m}$. For the reverse inclusion, suppose $X \in \mathcal{D}_{m}$, write
\[
   X=u\varpi_E^{m}j, \quad u \in U_n(F), j \in J(\beta,\a).
\]
Since $X^{-1} \in \varpi^{-m}_E\mathfrak{J}(\beta,\a)$, it follows that $u^{-1} \in J(\beta,\a)$ which implies that $X \in \varpi_E^{m} J(\beta,\a)$. 
\end{proof}

It follows from Lemma \ref{shell-indetification} that
\[
  I_{m}=\int\limits_{J(\beta,\a)}  {\mathcal W}_1((\varpi_E^{m}X)^{-1}){\mathcal W}_2(\varpi_E^mX) \phi_m(X) dX,
\]
which is precisely the integral in the expression for $\gamma(s,\pi_1\times\check{\pi}_2,\psi)$ in Proposition~\ref{RS-gamma}. Hence $I_m\neq 0$ and we obtain the following theorem.

\begin{theorem}
\label{LS-equal}
Suppose $\pi_1 \cong \mathrm{c}\text{-}\mathrm{Ind}^{GL_n(F)}_{E^{\times}J(\beta,\a)}(\tilde{\lambda}_1)$ and $\pi_2 \cong  \mathrm{c}\text{-}\mathrm{Ind}^{GL_n(F)}_{E^{\times}J(\beta,\a)}(\tilde{\lambda}_2)$ are unitary supercuspidal representations of $GL_n(F)$ associated to the same endo-class and $(J,\lambda)$ is the corresponding cover as above. Assume $\pi_2 \not\cong \pi_1 \otimes(\chi\circ\det)$ for any unramified character $\chi$ of $F^{\times}$. Then $I_r=0$ for $r < m$ and
\[
 C_{\psi}(s,\pi_1 \times \pi_2)=\upsilon^{-1}\mathrm{vol}(J\cap N)^{-1}\mathrm{vol}(J\cap\overline{N})^{-1}\omega_{\pi_2}(-1)^n \gamma(s,\pi_1 \times \check{\pi}_2,\psi).
\]
\end{theorem}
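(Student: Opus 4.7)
The starting point is that equation \eqref{sum-shell} has already been assembled from the defining relation \eqref{lc}, the splitting of the cover through Proposition \ref{lemma-1} (which turns the intertwining operator into a clean volume factor times $f'_w$), and Lemma \ref{lemma-2} (which evaluates $\Omega(-s,\mathsf{w}_{0}(\sigma))(f'_w)$). So to prove the theorem, I need only (i) show that the shell integrals $I_r$ vanish for $r<m$, and (ii) pin down $I_m$ in a form that matches Proposition \ref{RS-gamma}.

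For step (i), I plan to invoke \cite[Proposition 2.1(b)]{Sha10}, which asserts that for supercuspidal inducing data the local coefficient $C_{\psi}(s,\pi_1\times\pi_2)$ is a unit in $\mathbb{C}[q^s,q^{-s}]$, i.e., a monomial in $q^{-s}$. Comparing this with the right-hand side of \eqref{sum-shell}, which is a Laurent polynomial $\omega_{\pi_2}(-1)\sum_{-t\leq r\leq m} q_{\a}^{rs}I_r$ in $q^{-s}$ with pairwise distinct exponents $(n/e)rs$, linear independence of the monomials $q_{\a}^{rs}$ forces at most one of the $I_r$ to be nonzero. To isolate the surviving shell, I will use Lemma \ref{shell-indetification} to rewrite
\[
I_m=\int\limits_{J(\beta,\a)}{\mathcal W}_1((\varpi_E^m X)^{-1}){\mathcal W}_2(\varpi_E^m X)\phi_m(X)\,dX,
\]
and then apply the unitarity identity from Proposition \ref{Whitt}, namely ${\mathcal W}_1(g^{-1})=\overline{{\mathcal W}_1(g)}$ valid on $\tilde{J}(\beta,\a)$, to replace ${\mathcal W}_1((\varpi_E^m X)^{-1})$ by $\overline{{\mathcal W}_1(\varpi_E^m X)}$. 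The result is exactly the integral appearing in Proposition \ref{RS-gamma}, whose nonvanishing follows because $\gamma(s,\pi_1\times\check{\pi}_2,\psi)$ is a nonzero rational function in $q^{-s}$. Thus $r=m$ is the unique surviving shell and $I_r=0$ for $r<m$.

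For the closing step, I substitute the evaluation from Proposition \ref{RS-gamma} into \eqref{sum-shell}: the factor $q_{\a}^{ms}$ cancels and the remaining sign simplifies via $\omega_{\pi_2}(-1)\cdot\omega_{\pi_2}(-1)^{n-1}=\omega_{\pi_2}(-1)^{n}$, yielding
\[
\mathrm{vol}(J\cap N)\mathrm{vol}(J\cap\overline{N})\,C_{\psi}(s,\pi_1\times\pi_2)=\upsilon^{-1}\omega_{\pi_2}(-1)^{n}\gamma(s,\pi_1\times\check{\pi}_2,\psi),
\]
which rearranges to the claim. The conceptual obstacle is the monomial argument for (i): only because the supercuspidal local coefficient is a priori known to be a Laurent monomial can we collapse the shell sum to a single term, and this would fail entirely outside the supercuspidal setting. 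The unitarity identity plays a complementary role — without it we could not recognize $I_m$ as the Rankin–Selberg integral of Proposition \ref{RS-gamma}, which is what certifies $I_m\neq 0$ and pins down which shell survives.
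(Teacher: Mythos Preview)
Your proposal is correct and follows essentially the same approach as the paper: both use \cite[Proposition 2.1(b)]{Sha10} to force the shell sum to collapse to a single term, then invoke Lemma~\ref{shell-indetification} to rewrite $I_m$ and identify it with the integral in Proposition~\ref{RS-gamma}, thereby certifying $I_m\neq 0$ and deducing the formula. You are slightly more explicit than the paper in spelling out the unitarity identity ${\mathcal W}_1(g^{-1})=\overline{{\mathcal W}_1(g)}$ needed to match $I_m$ with the Rankin--Selberg integral, which the paper leaves implicit in the phrase ``which is precisely the integral.''
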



\subsection{\bf The case $\tau_1 \cong \tau_2$}
\label{sec:equal}
In this case, as noted in \S\ref{RS-nonsplit}, we may take $\pi_1\cong \pi_2$ which henceforth will be denoted as $\pi$. Throughout \S\ref{sec:equal}, we do not assume $\pi$ is unitary. Let $\tau$ denote the representation $\tau_1\cong \tau_2$. We follow the paradigm of \cite[\S4.2]{BHK} to reduce the calculation of the local coefficient to that associated with a unramified principal series representation of $GL_2(\mathfrak k)$, where ${\mathfrak k}$ is an unramified field extension of $E$ (see below). In this latter situation, the local coefficient was first computed by Casselman \cite{cass2} which we review below. (See \cite{KK} for a treatment of  ramified principal series representations via the theory of types and covers.) Let us first collect certain properties of the cover $(J',\lambda')$ (cf. \S\ref{c}) in this situation. 

Fix a maximal simple type $(J_{1},\lambda_1)$ associated to a simple stratum $[\a,k,0,\beta]$ contained in $\pi$ as in Proposition~\ref{l1}. Let $(J_L,\lambda_L)$ be the corresponding ${\mathfrak t}$-type in $L$, namely, $J_L=J_1\times J_1$ and $\lambda_{L}=\lambda_1\times\lambda_1$. Recall associated to this data is another simple stratum $[\a',2k,0,\beta']$ in ${\mathcal A}=M_{2n}(F)$ with associated subgroups $H^1(\beta',\a')\subseteq J^{1}(\beta',\a')\subseteq J(\beta',\a')$. By definition $J'$ is a subgroup of $J(\beta',\a')$ containing $H^1(\beta',\a')$. According to \cite[Theorem (7.2.17)]{Kut3} there is a representation $\eta$ of $J(\beta',\a')$ so that $\lambda'$ is the natural representation of $J'$ on the space of $J^1(\beta',\a')\cap N$-fixed vectors in $\eta$ and $\eta={\rm Ind}_{J'}^{J(\beta',\a')}{\lambda'}$. In the terminology of loc.cit., the parabolic subgroup $P$ is {\it subordinate} to $(J(\beta',\a'),\eta)$ and $(J_1,\lambda_1)$ is the {\it associated} maximal simple type of $(J(\beta',\a'),\eta)$. 
This entails a support preserving isomorphism of Hecke algebras ${\mathcal H}(G,\lambda')\cong {\mathcal H}(G,\eta)$, meaning, if $\phi'\in {\mathcal H}(G, \lambda')$ has support  $J'gJ'$ for some $g\in G$, then its image $\phi\in {\mathcal H}(G,\eta)$ has support $J(\beta',\a')gJ(\beta',\a')$, and the space of functions supported on a double coset is one dimensional (cf. \cite[7.2.19]{Kut3}). By \cite[(5.6.6)]{Kut3}, ${\mathcal H}(G,\eta)$ is a affine Hecke algebra.

Let ${\mathfrak k}$ be the unramified extension of degree $n/d$ over $E$ with ${\mathfrak k}^{\times}\subset {\mathfrak K}(\b)$. Put ${\mathcal C}={\rm End}_{\mathfrak k}(V)$, ${\mathfrak c}'={\a'}\cap {\mathcal C}=\b'\cap{\mathcal C}$, and choose the decomposition $V=V_1\oplus V_2$ so that it is a ${\mathfrak k}$-decomposition subordinate to the $\o_{\mathfrak k}$-order $\c'$. Select a $\o_{\mathfrak k}$-basis of the lattice chain ${\mathcal L}$ so that $G':={\mathcal C}^{\times}$ is identified with $GL_2(\mathfrak k)$ and $\c'$ is identified with 
\[
\c'=\begin{pmatrix}\o_{\mathfrak k}&\o_{\mathfrak k}\\\p_{\mathfrak k}&\o_{\mathfrak k}\end{pmatrix}.
\] 
Utilizing this basis construct a $\o_E$-basis of the lattice chain ${\mathcal L}$ as in \cite[(5.5.2)]{Kut3} so that $V=V_1\oplus V_2$ is also a $E$-decomposition that is subordinate to $\a'$. With this configuration, the following properties hold \cite[(7.6.17)]{Kut3}: 
\begin{enumerate}
\item $J(\beta',\a')\cap G'={\mathcal I}'$, where ${\mathcal I}'(=U(\c'))=\begin{psmallmatrix}\o_{\mathfrak k}^{\times}&\o_{\mathfrak k}\\{\p}_{\mathfrak k}&\o_{\mathfrak k}^{\times}\end{psmallmatrix}$ is the standard Iwahori subgroup of $GL_2(\mathfrak k)$;
\item $P\cap G'=B'$ is the standard Borel subgroup of $G'$ and is subordinate to the simple type $({\mathcal I}',1_{{\mathcal I}'})$ in $G'$ ($1_{{\mathcal I}'}$ is the trivial character of ${\mathcal I}'$);
\item $L\cap G'=A'\cong {\mathfrak k}^{\times}\times {\mathfrak k}^{\times}$ is the diagonal torus in $G'$ and $B'=A'U'$, $U^{\prime} =N\cap G'$ is the upper triangular matrix consisting of $1$'s on the diagonal.
\end{enumerate}

Let $K^{\prime}= GL_2(\mathfrak{o}_{\mathfrak k})$ denote the maximal compact subgroup and let $A_0^{\prime}=A^{\prime} \cap K^{\prime}$. Let ${\sf W}=N_{G^{\prime}}(A^{\prime})/A^{\prime}$ be the Weyl group and let $\widetilde{\sf W}=N_{G^{\prime}}(A^{\prime})/{A_0^{\prime}}$ denote the affine Weyl group. Let $X_{\ast}=X_{\ast}(A')$ denote the group of cocharacters of $A'$. It is a free abelian group of rank $2$ and is identified with ${\mathbb Z}^2$ as follows: to the pair $(m,n)$ corresponds the cocharacter which sends $z$ in the multiplicative group ${\mathfrak k}^{\times}$ to the diagonal matrix $\begin{psmallmatrix}z^m&\\&z^n\end{psmallmatrix}$. We write $\Lambda^{\prime}_{+}$ to denote the set of dominant weights given by ${A_{+}^{\prime}}\slash {A_0^{\prime}}$ where 
\[
{A_{+}^{\prime}}=\{ a \in A^{\prime} \; : \; a(\mathcal{I}^{\prime} \cap U^{\prime})a^{-1} \subseteq (\mathcal{I}^{\prime} \cap U^{\prime}) \}.
\]
It corresponds to the set of all pairs $(m,n)$ satisfying $m\geq n$. There is a canonical isomorphism $A'/A'_{0}\cong X_{\ast}$ under which a cocharacter $\mu$ corresponds to the class of the element $\mu(\varpi_{\mathfrak k})$. Consequently $\widetilde{\sf W}=X_{\ast} \rtimes \sf W$. 

\par
We note that elements in $\widetilde{\sf W}$ may be viewed as elements in $G$ since $G'$ is a subgroup of $G$.  
Then ${\sf W}=\{1, {\sf w}_0\}$. It is a well known fact that $\widetilde{\sf W}$ can also be viewed as an extension of a Coxeter group: Namely, put 
\[
   t=\begin{pmatrix}  & 1 \\ \varpi_{\k} & \end{pmatrix}, s_{0}=\begin{pmatrix}  & \varpi_{\k}^{-1} \\ \varpi_{\k} & \end{pmatrix},\mbox{and }s_1={\sf w}_0;
  \] 
then $ts_1t^{-1}=s_{0}$ and $s_{0}^2=s_1^2=(s_0s_1)^3=1$. Let $R\leq \widetilde{\sf W}$ be the subgroup generated by the elements $\{s_{0},s_{1}\}$; it is a Coxeter group. One has $\widetilde{\sf W}=\langle t \rangle \rtimes R$. Hence every ${\sf w} \in \widetilde{\sf W}$ has a unique expression 
\begin{equation}
\label{H-reduced}
{\sf w}=t^k s_{j_1}s_{j_2}\ldots s_{j_{\ell}}, \mbox{with }s_{j_i}\in \{s_{0},s_{1}\},
\end{equation}
where $\ell=\ell({\sf w})$ is the smallest number of $s_j$ needed and is called the length of ${\sf w}$. It satisfies the formula 
\[
 q_{\a}^{\ell({\sf w})}=[{\mathcal I}'{\sf w}{\mathcal I}': {\mathcal I}']=[{\mathcal I}': \mathcal{I}^{\prime}\cap {\sf w}\mathcal{I}^{\prime}{\sf w}^{-1}].
\]
(Observe that $q_{\a}=[\mathfrak{o}_{\k} : \mathfrak{p}_{\k} ]$.)

Suppose the Haar measure on $G'$ is so that ${\rm vol}({\mathcal I}')=1$. For ${\sf w} \in \widetilde{\sf W}$, let ${\vartheta}'_{\sf w}\in {\mathcal H}(G',1_{\mathcal I}')$ denote the characteristic function of the double coset ${\mathcal I}'{\sf w}{\mathcal I}'$. For a cocharacter $\mu$, this is interpreted as the characteristic function of the double coset ${\mathcal I}'\mu(\varpi_{\mathfrak k}){\mathcal I}'$. The collection $\{\vartheta'_{\sf w}\}_{{\sf w} \in \widetilde{\sf W}}$ forms a ${\mathbb C}$-basis for the algebra ${\mathcal H}(G',1_{\mathcal I}')$ and the following relations are known:
\begin{eqnarray}
\vartheta'_{{\sf w}_{1}}\star \vartheta'_{{\sf w}_{2}}&=&\vartheta'_{{\sf w}_{1}{\sf w}_{2}} \mbox{ if }\ell({\sf w}_{1}{\sf w}_{2})=\ell({\sf w}_{1})+\ell({\sf w}_{2}),\label{length}\\
\vartheta'_{{\sf w}_{0}}\star \vartheta'_{{\sf w}_{0}}&=&(q_{\a}-1)\vartheta'_{{\sf w}_{0}}+q_{\a}\vartheta'_{1}\label{quad-prime}.
\end{eqnarray}

If $1_{\mathfrak k}$ is the trivial character of ${\mathfrak k}^{\times}$, then $1_{\mathfrak k}\times 1_{\mathfrak k}$ is a supercuspidal representation of $A'$ whose associated type is given by the compact open subgroup $A'_{0}\cong {\mathfrak o}_{\mathfrak k}^{\times}\times {\mathfrak o}_{\mathfrak k}^{\times}$ and its trivial character. The pair $({\mathcal I}', 1_{{\mathcal I}'})$, which is a simple type in $G'$, is a $G'$-cover of $(A'_{0},1_{A'_{0}})$ whose associated maximal simple type is $({\o}^{\times}_{\k}, 1_{{\o}^{\times}_{\k}})$. We identify ${\mathcal H}(A', 1_{A'_{0}})={\mathcal H}(\k^{\times}, 1_{\o^{\times}_{\k}})\otimes{\mathcal H}({\k}^{\times},1_{\o^{\times}_{\k}})$ and ${\mathcal H}(L,\lambda_L)={\mathcal H}(GL_n(F),\lambda_1)\otimes {\mathcal H}(GL_n(F),\lambda_1)$. By \cite[(7.6.20)]{Kut3}, there are canonical algebra isomorphisms $\Psi: {\mathcal H}(G',1_{{\mathcal I}'})\longrightarrow {\mathcal H}(G,\lambda')$ and $\Psi': {\mathcal H}(A',1_{A'_{0}})\longrightarrow {\mathcal H}(L,\lambda_{L})$ so that the following diagram is commutative:
\begin{equation}\label{fig-ind}
\xymatrix{
{\mathcal H}(A',1_{A'_{0}}) \ar[d]_{j_{B'}} \ar[r]^{\Psi'}
& {\mathcal H}(L,\lambda_L)  \ar[d]^{j_P} \\
{\mathcal H}(G',1_{{\mathcal I}'}) \ar[r]_{\Psi} & {\mathcal H}(G,\lambda'). }
\end{equation}
where the vertical maps $j_{B'}$ and $j_P$ realize the (normalized) induction functor  ${\iota}_{B'}^{G'}$ and ${\iota}_{P}^{G}$, respectively, as explained in loc.cit. To keep notations short, we write ${\mathcal H}$ to denote Hecke algebra ${\mathcal H}(G,\lambda')$ and ${\mathcal H}'$ to denote the Hecke algebra ${\mathcal H}(G',1_{{\mathcal I}'})$. 

Observe that the Hecke algebra ${\mathcal H}$ is supported on $J'\widetilde{\sf W}J'$. The representation $\lambda_L$ extends to a representation of $A^{\prime}$ via $\widetilde{\lambda}_L$ (since ${\mathfrak k}^{\times}\subset \widetilde{J}_1$) and to a representation of $\sf W$ via permutation of vectors in tensor products. We denote this extended representation of $\widetilde{\sf W} \ltimes J_L$ as $\dot{\lambda}_{L}$. For ${\sf w} \in \widetilde{\sf W}$, let $\vartheta_{\sf w} \in \mathcal{H}$ denote the (normalized) element so that 
\[
  \mathrm{Supp}(\vartheta_{\sf w})=J'{\sf w}J'\quad \text{with} \quad \vartheta_{\sf w}({\sf w})={\rm vol}(J')^{-1}v_{\sf w}\dot{\lambda}_L({\sf w}),
\]
where $v_{\sf w}=\left(\frac{{\rm vol}({\mathcal I}'{\sf w}{\mathcal I}')}{{\rm vol}_{J'}(J'{\sf w}J')}\right)^{1/2}$ with ${\rm vol}_{J'}(J'{\sf w}J')=[J'{\sf w}J':J']$, a factor  that is independent of any Haar measure on $G$. Let $a\in \widetilde{\sf W}$ denote the element $a=\begin{pmatrix}\varpi_{\k}&\\&1\end{pmatrix}$. In general, there is an ambiguity up to a scalar in describing an isomorphism between ${\mathcal H}' \cong {\mathcal H}$, the isomorphism $\Psi$ above is fixed (see \cite[(7.6.24)]{Kut3}) by stipulating 
\begin{equation}\label{eqn-a}
\Psi(\vartheta'_{a})=\vartheta_{a}.
\end{equation}
(The normalizing factors ``$\delta_P$" and ``$\delta_Q$" in loc.cit. are absorbed as ``$v_{\sf w}$" in the definition of $\vartheta_{\sf w}$.)

\par
In what follows, we write $\Sigma_{\sf w}$ to denote the set of right cosets 
\[
\Sigma_{\sf w}=J'/(J'\cap J'^{{\sf w}}), {\sf w} \in \widetilde{\sf W}.
\]
We need to determine the effect of $\Psi$ on all basis elements $\vartheta_{\sf w} , {\sf w} \in \widetilde{\sf W}$. To that end, we start with the following:

\begin{lemma}\label{lemma-s1}
Keep the above notation. Then, for $s:=s_1={\sf w}_0$, we have  
\[
\Psi(\vartheta'_{s})=\omega_{\pi}(-1)\vartheta_{s}.
\]
 \end{lemma}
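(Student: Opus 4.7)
The plan is to exploit support preservation of the isomorphism $\Psi$ together with the quadratic relation \eqref{quad-prime} and a direct computation of $\vartheta_s \star \vartheta_s$ in $\mathcal{H}$.

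First, I would note that $\Psi$ is support-preserving, so $\Psi(\vartheta'_s)$ belongs to the one-dimensional space of functions in $\mathcal{H}$ supported on $J'sJ'$ (cf.\ \cite[(7.2.19)]{Kut3}). Hence $\Psi(\vartheta'_s) = c\,\vartheta_s$ for some $c \in \mathbb{C}^{\times}$, and the rest of the proof is devoted to pinning down $c$. Applying $\Psi$ to \eqref{quad-prime} and using $\Psi(\vartheta'_1) = \vartheta_1$ yields
\[
c^{2}\,(\vartheta_s \star \vartheta_s) \;=\; (q_\a - 1)\,c\,\vartheta_s + q_\a\,\vartheta_1.
\]
The crux is to show directly that $\vartheta_s \star \vartheta_s = \omega_\pi(-1)(q_\a - 1)\,\vartheta_s + q_\a\,\vartheta_1$ in $\mathcal{H}$, from which both resulting coefficient identities (together with $\omega_\pi(-1)^{2} = 1$) force $c = \omega_\pi(-1)$.

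To compute $\vartheta_s \star \vartheta_s$ I would take a right-coset decomposition $J'sJ' = \bigsqcup_i y_iJ'$. Since $\vartheta_s(hxk) = \check\lambda'(h)\vartheta_s(x)\check\lambda'(k)$, the dependence on the right-$J'$ factor cancels in the product $\vartheta_s(y_ik)\vartheta_s((y_ik)^{-1}g)$, and the convolution collapses to
\[
(\vartheta_s \star \vartheta_s)(g) \;=\; \mathrm{vol}(J')\sum_i \vartheta_s(y_i)\vartheta_s(y_i^{-1}g).
\]
At $g = 1$, the identity $\dot\lambda_L(s)^{2} = \mathrm{id}$ together with the normalization $v_s^{2} = q_\a / [J'sJ' : J']$ yields precisely $q_\a\,\vartheta_1(1)$. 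At $g = s$, for each nontrivial representative a Bruhat-type decomposition $y_i^{-1}s = u_1\,h_i\,s\,u_2$ with $u_1,u_2 \in J' \cap N$ trivial under $\lambda'$ and Levi factor $h_i \in J_L$ of the form $\mathrm{diag}(z_i^{-1},\,-z_i)$ (up to $J(\beta,\a)$-units on each component) isolates the matrix coefficient $\check\lambda'(h_i) = \check\lambda_1(z_i^{-1})\otimes\check\lambda_1(-z_i)$; after interaction with $\dot\lambda_L(s)$ via the swap of tensor factors this collapses to the scalar $\omega_\pi(-1)$ through the central character of $\pi$, yielding the $(q_\a - 1)\omega_\pi(-1)\,\vartheta_s$ term.

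The main obstacle is executing this Bruhat decomposition cleanly in the general (positive level, arbitrary $n$) setting and verifying that the nontrivial permutation action of $\dot\lambda_L(s)$ on $W_1 \otimes W_2$ does not introduce any additional scalar beyond $\omega_\pi(-1)$. The level-zero $GL_2$ case provides the prototype: the factorization $\begin{psmallmatrix}1 & 0 \\ -z_i & 1\end{psmallmatrix} = u \cdot \mathrm{diag}(z_i^{-1}, -z_i) \cdot s \cdot u$ together with the character evaluation $\bar\tau(z_i)\bar\tau(-z_i)^{-1} = \bar\tau(-1)^{-1} = \omega_\pi(-1)$ makes the mechanism transparent, and the general case proceeds by an analogous computation using the Iwahori decomposition of $J'$ and the compatibility of $\kappa'$ with the permutation action. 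The sign ambiguity in $v_s$ noted in \cite[Remark 4.2.6]{Kim} is fixed along the way by this computation.
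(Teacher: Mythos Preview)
Your overall strategy—support preservation forces $\Psi(\vartheta'_s)=c\,\vartheta_s$, then apply $\Psi$ to \eqref{quad-prime} and compute $\vartheta_s\star\vartheta_s$ directly via a coset decomposition and the Bruhat factorization $sn(-y)s=\begin{psmallmatrix}y^{-1}&I_n\\&-y\end{psmallmatrix}s\,n(-y^{-1})$—is exactly the paper's approach. The evaluation at $g=1$ is fine and gives $(\vartheta_s\star\vartheta_s)(1)=q_{\a}\vartheta_1(1)$, hence $c^2=1$, just as in the paper.

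The gap is at $g=s$. Your claim that the contribution there ``collapses to the scalar $\omega_\pi(-1)$ through the central character'' is only literally true when $\lambda_1$ is one-dimensional (the level-zero $GL_2$ prototype you cite). In general, after extracting the central sign $\omega_\pi(-1)$ from the $-y$ block, each summand contributes the \emph{operator} $(\lambda_1(y^{-1})\otimes\lambda_1(y))\dot\lambda_L(s)$ on $W_1\otimes W_1$, and there is no reason the sum over $y\in\Sigma'_s$ should be a scalar multiple of $\dot\lambda_L(s)$ by inspection. The paper does \emph{not} establish $(\vartheta_s\star\vartheta_s)(s)=\omega_\pi(-1)(q_\a-1)\vartheta_s(s)$ as an operator identity; instead it multiplies both sides of $c^2(\vartheta_s\star\vartheta_s)=q_\a\vartheta_1+c(q_\a-1)\vartheta_s$ at $s$ on the left by $\dot\lambda_L({\sf w}_0)$ and takes \emph{traces}, using the elementary computation $\mathrm{Tr}\big((\lambda_1(y^{-1})\otimes\lambda_1(y))\dot\lambda_L(s)\big)=\dim\lambda_1$ (diagonalize $\lambda_1(y)$). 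That trace identity, combined with $c^2=1$ from the evaluation at $1$, then forces $c=\omega_\pi(-1)$ and incidentally yields the dimension formula $\dim\lambda_1=|\Sigma'_s|\,v_s(q_\a-1)^{-1}$. Your proposal is missing precisely this trace step; without it the ``collapse to a scalar'' assertion is unjustified beyond the rank-one level-zero case, and the appeal to ``compatibility of $\kappa'$ with the permutation action'' does not supply it.
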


 \begin{proof}
 Since the space of functions supported on a double coset is one dimensional, we have $\Psi(\vartheta'_{s})=c_s \vartheta_{s}$ for some scalar $c_s \in {\mathbb C}^{\times}$. The quadratic relation $\vartheta'_{s}\star \vartheta'_{s}=q_{\a}\vartheta'_1+(q_{\a}-1)\vartheta'_{s}$ implies that 
  \begin{equation}\label{eqn-w0}
c_s^2(\vartheta_{s}\star \vartheta_{s})= \Psi(\vartheta'_{s}\star \vartheta'_{s})=q_{\a}\vartheta_1+c_s(q_{\a}-1)\vartheta_{s}.
 \end{equation}
 \par
On the other hand 
\[
(\vartheta_{s}\star \vartheta_{s})(s)=\int\limits_{J'sJ'}\vartheta_{s}(x) \vartheta_{s}(x^{-1}s)dx={\rm vol}(J') \int\limits_{J'sJ'/J'}\vartheta_{s}(x) \vartheta_{s}(x^{-1}s)dx 
= v_{s}\sum_{j\in \Sigma_s}\dot{\lambda}_{L}(s) \vartheta_{s}(sj^{-1}s).
\]
Using the Iwahori factorization of $J'$, one sees that the map $n(x)=\left(\begin{smallmatrix}I_n&x\\&I_n\end{smallmatrix}\right)\mapsto x$ gives a bijection between 
\[
\Sigma_s\cong \varpi_{E}^{-1}{\mathfrak H}^1(\beta,\a)/\varpi_E{\mathfrak J}(\beta,\a),
\]
and that $J'sJ'=J's(J'\cap N)$. Let us write $j_x$ to denote the matrix $\left(\begin{smallmatrix}I_n&x\\&I_n\end{smallmatrix}\right)$. Writing $j=n(y)$ in the above summand, we see that 
\begin{equation}\label{imply1}
sn(-y)s\in J'sJ' \Leftrightarrow sn(-y)sn(x)s\in J'
\end{equation}
for some $x\in \varpi^{-1}_E{\mathfrak H}^1(\beta,\a)$. Using the Bruhat decomposition (for $y\neq 0$)
 \[
   s\begin{pmatrix} I_n & -y \\ & I_n \end{pmatrix} s=\begin{pmatrix} y^{-1} & I_n \\ & -y \end{pmatrix} s \begin{pmatrix}I_n & -y^{-1} \\ & I_n  \end{pmatrix},
  \]
  we see that 
  \[
  (\ref{imply1})\Leftrightarrow \begin{pmatrix}y^{-1}&I_n\\&-y\end{pmatrix}\begin{pmatrix}I_n&\\x-y^{-1}&I_n\end{pmatrix}\in J'
  \]
  which in turn implies that $x,y\in {\mathfrak J}(\beta,\a)^{\times}=J(\beta,\a)$. Hence, for $y$ such that $sj_ys$ belongs to the support of $\vartheta_{s}$, 
  \[
  \vartheta_{s}(sj_y s)={\rm vol}(J')^{-1}v_{s}\omega_{\pi}(-1)(\lambda_1(y^{-1})\otimes \lambda_1(y))\dot{\lambda}_{L}
(s) 
 \]
Now, by choosing an eigenbasis for the operator $\lambda_1(y)$ acting on $W_1$ (the space of $\lambda_1$) we find that the trace of the operator $(\lambda_1(y^{-1})\otimes\lambda_1(y))\dot{\lambda}_L(s)$ acting on $W_1\otimes W_1$ is given by  
\[
{\rm Tr}((\lambda_1(y^{-1})\otimes\lambda_1(y))\,\dot{\lambda}_L(s))={\rm dim } (W_1) (={\rm dim } (\lambda_1))
\] 
  
  \par
 Put  
  \[
  \Sigma'_s=\left\{j\in \Sigma_s: \vartheta_{s}(sj^{-1}s)\neq 0\right\}.
  \]
Evaluating both sides of (\ref{eqn-w0}) at $s$, then multiplying by the permutation operator $\dot{\lambda}_L({\sf w}_0)$ on the left and then taking the trace, we obtain 
   
  \begin{equation}\label{eqn-d}
  {\rm vol}(J')^{-1}c_s^2v^2_{s}\omega_{\pi}(-1){\rm dim} (\lambda_1)|\Sigma'_s|={\rm vol}(J')^{-1}c_s(q_{\a}-1)v_{s}{\rm dim}(\lambda_1)^2
  \end{equation}
  which yields $c_s=\omega_{\pi}(-1)\frac{(q_{\a}-1){\rm dim} (\lambda_1)}{|\Sigma'_s|v_{s}}$. On the other hand, evaluating both sides of (\ref{eqn-w0}) at the identity element gives 
  \[
  {\rm vol}(J')^{-2}c_s^2v^2_{s}{\rm vol}(J'sJ')={\rm vol}(J')^{-1}q_{\a}.
  \]
  Since ${\rm vol}({\mathcal I}'s{\mathcal I}')=q_{\a}$, it follows from this that $c_s^2=1$. Now, from (\ref{eqn-d}) we obtain  
  \[
  c_s=\omega_{\pi}(-1) \mbox{ and that } {\rm dim}(\lambda_1)=|\Sigma'_s|v_{s}(q_{\a}-1)^{-1}.
  \]
  \end{proof}
  
 \begin{remark}
 The dimension formula 
 \[
 {\rm dim}(\lambda_1)=|\Sigma'_{{\sf w}_0}|v_{{\sf w}_0}(q_{\a}-1)^{-1}
 \]
  generalizes the case of level-zero representations established by R. Howe \cite[Appendix 3]{Howe85}.
\end{remark}

  Next, in $\widetilde{\sf W}$ we have the relation $t s_{0}=a=s_1t$. It follows from (\ref{length}) that 
    \[
  \vartheta'_t\star \vartheta'_{s_{0}}=\vartheta'_{a}=\vartheta'_{s_1}\star \vartheta'_{t}
  \] 
  in the Hecke algebra ${\mathcal H}'$. We now show that the analogue of this holds in ${\mathcal H}$:
  
  \begin{lemma} \label{lemma-s2}
  Keeping the hypothesis of Lemma~\ref{lemma-s1}, for elements $s_{0}, t$ and $a$ in $\widetilde{\sf W}$ as above, we have
$
\vartheta_t \star \vartheta_{s_{0}}=\vartheta_a= \vartheta_{s_1} \star \vartheta_{t}.
$
As a result
\[
\Psi(\vartheta'_s)=\omega_{\pi}(-1)\vartheta_s
\text{\;
for $s=s_{0},t$. }
\]
\end{lemma}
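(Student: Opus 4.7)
The plan is twofold. First, I will establish the convolution identities $\vartheta_t \star \vartheta_{s_0} = \vartheta_a$ and $\vartheta_{s_1} \star \vartheta_t = \vartheta_a$ directly in $\mathcal{H}$, as the analogue in $\mathcal{H}$ of the length-additive braid relation \eqref{length} for the Iwahori Hecke algebra $\mathcal{H}'$. Then I will combine these identities with Lemma~\ref{lemma-s1} and the normalization $\Psi(\vartheta'_a)=\vartheta_a$ from \eqref{eqn-a} to force the scalars in $\Psi(\vartheta'_t)=c_t\vartheta_t$ and $\Psi(\vartheta'_{s_0})=c_{s_0}\vartheta_{s_0}$ to both equal $\omega_{\pi}(-1)$.

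For the first product identity, note that $\ell(a)=\ell(s_1)+\ell(t)=1+0$, since $t$ is a length-zero (``rotation-type'') element of $\widetilde{\sf W}$. The underlying reason for \eqref{length} is the decomposition $\mathcal{I}'s_1\mathcal{I}'\cdot \mathcal{I}'t\mathcal{I}'=\mathcal{I}'a\mathcal{I}'$; the analogous statement $J's_1J'\cdot J'tJ'=J'aJ'$ in $G$ should follow from the Iwahori factorization of $J'$ with respect to $P$ together with the fact that $t$ (being length-zero) is compatible with this factorization. This would show that $\vartheta_{s_1}\star\vartheta_t$ is supported on the single double coset $J'aJ'$. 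Evaluating both sides at $a$ gives
\[
(\vartheta_{s_1}\star\vartheta_t)(a)=\mathrm{vol}(J')^{-1}v_{s_1}v_t\,\dot{\lambda}_L(s_1t),
\]
which I match against $\vartheta_a(a)=\mathrm{vol}(J')^{-1}v_a\,\dot{\lambda}_L(a)$ by verifying $v_{s_1}v_t=v_a$. This last identity comes from the fact that both $[\mathcal{I}'w\mathcal{I}':\mathcal{I}']$ and $[J'wJ':J']$ are multiplicative on length-additive products, hence so are their square roots. The identity $\vartheta_t\star\vartheta_{s_0}=\vartheta_a$ is handled identically using $a=ts_0$.

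Given these identities, I apply $\Psi$ to $\vartheta'_{s_1}\star\vartheta'_t=\vartheta'_a$. Since $\Psi$ preserves the support on double cosets, I may write $\Psi(\vartheta'_t)=c_t\vartheta_t$ for some scalar $c_t\in\mathbb{C}^\times$. By Lemma~\ref{lemma-s1} and \eqref{eqn-a}, the relation becomes $\omega_\pi(-1)c_t(\vartheta_{s_1}\star\vartheta_t)=\vartheta_a$, so $\omega_\pi(-1)c_t=1$; since $\omega_\pi(-1)^2=1$, this forces $c_t=\omega_\pi(-1)$. Writing $\Psi(\vartheta'_{s_0})=c_{s_0}\vartheta_{s_0}$ and applying $\Psi$ to $\vartheta'_t\star\vartheta'_{s_0}=\vartheta'_a$ gives $c_tc_{s_0}=1$, whence $c_{s_0}=c_t^{-1}=\omega_\pi(-1)$.

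The main obstacle is the direct Hecke algebra calculation: justifying $J's_1J'\cdot J'tJ'=J'aJ'$ and the volume identity $v_{s_1}v_t=v_a$ (and their analogues for $ts_0$). Both require careful bookkeeping in the Iwahori factorization of $J'$ adapted to $P$ and use in an essential way that $t$ has length zero so that conjugation by $t$ interacts transparently with $J'\cap N$, $J'\cap L$, and $J'\cap\overline{N}$. Once these combinatorial identities are in hand, the extraction of the scalars $c_t$ and $c_{s_0}$ from the Hecke algebra identities applied by $\Psi$ is mechanical.
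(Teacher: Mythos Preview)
Your overall strategy is the same as the paper's, and the second half---deducing $c_t=c_{s_0}=\omega_\pi(-1)$ from the convolution identities together with Lemma~\ref{lemma-s1} and \eqref{eqn-a}---is exactly right. The gap is in the first half, and it is not merely bookkeeping.

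The intuition you rely on, that ``$t$ has length zero so that conjugation by $t$ interacts transparently with $J'\cap N$, $J'\cap L$, and $J'\cap\overline{N}$,'' is false. Length zero means $t$ normalizes the Iwahori $\mathcal{I}'$ of $G'$, but $t$ does \emph{not} normalize $J'$: from the description \eqref{J-prime} one computes that $J'/(J'\cap tJ't^{-1})\cong \varpi_{\mathfrak k}^{-1}\mathfrak{H}^1(\beta,\mathfrak{a})/\mathfrak{J}(\beta,\mathfrak{a})$, which is nontrivial in general. Consequently your two key claims both fail as stated. First, the index $[J'wJ':J']$ is \emph{not} multiplicative on length-additive products; in particular $v_{s_1}v_t\neq v_a$ in general. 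Second, the evaluation $(\vartheta_{s_1}\star\vartheta_t)(a)=\mathrm{vol}(J')^{-1}v_{s_1}v_t\,\dot\lambda_L(a)$ is wrong: when one expands the convolution as a sum over $j\in\Sigma_{s_1}$ and determines for which $j$ the term $\vartheta_t(s_1j^{-1}s_1t)$ is nonzero, one finds not a single coset but all of $\mathfrak{H}^1(\beta,\mathfrak{a})/\varpi_{\mathfrak k}\mathfrak{J}(\beta,\mathfrak{a})$, producing an extra factor of that index. The paper's proof is precisely this explicit computation: it identifies $\Sigma_t$, $\Sigma_{s_0}$, $\Sigma_{s_1}$ concretely, determines which cosets contribute, and then checks that the resulting combination of indices matches $v_a$. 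The two discrepancies from your formulas cancel, which is why the lemma is true, but neither step is as you describe. You need to carry out the coset analysis in $J'$ directly rather than import it from $\mathcal{I}'$.
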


\begin{proof}
The function $\vartheta_t\star \vartheta_{s_{0}}$ is supported on $J'tJ's_0J'$. It is a direct calculation to see that $t(J'\cap \overline{N})t^{-1}\subset J'\cap N$ and $s_{0}(J'\cap N)s_{0}\subset J\cap \overline{N}$ and therefore $J'tJ's_0J'=J'ts_{0}J'$. Similarly $\vartheta_{s_{1}}\star \vartheta_t$ is supported on $J's_1J'tJ'=J's_1tJ'$.  
 Let us evaluate $\vartheta_t\star \vartheta_{s_0}$ at $ts_0=a$:
\begin{equation}
\begin{split}
 (\vartheta_{t} \star \vartheta_{s_{0}})(t s_{0})
 &=\int\limits_{J'tJ'} \vartheta_{t} (u) \vartheta_{s_{0}}(u^{-1} t s_{0})du
 =\sum_{j \in \Sigma_t} \int_{J^{\prime}}\vartheta_{t}(j t z) \vartheta_{s_{0}}(z^{-1} t^{-1} j^{-1} t s_{0}) dz\\
 &=v_t \sum_{j \in \Sigma_t} 
 \dot{\lambda}_L(t) \vartheta_{s_{0}}(t^{-1} j^{-1} t s_{0}).
\end{split}
\end{equation}
Using the Iwahori factorization of $J'$ one checks that 
\begin{itemize}
\item $J'\cap tJ't^{-1}=J'\cap {\overline{N}}\cdot J'_{L}\cdot t(J'\cap {\overline{N}})t^{-1}$ 
\item $J'\cap s_{0}J's_{0}^{-1}=s_{0}(J'\cap N)s_{0}^{-1}\cdot J'_{L}\cdot (J'\cap N)$ 
\end{itemize} 
Hence we may identify 
\[
\Sigma_t
= \begin{pmatrix} I_n & \varpi_{\mathfrak{k}}^{-1}\mathfrak{H}^1(\beta,\mathfrak{a}) \big\slash  \mathfrak{J}(\beta,\mathfrak{a}) \\ & I_n \end{pmatrix}; \quad
   \Sigma_{s_0}= \begin{pmatrix} I_n &  \\  \varpi_{\mathfrak{k}}\mathfrak{J}(\beta,\mathfrak{a}) \big\slash  \varpi_{\mathfrak{k}}\mathfrak{H}^1(\beta,\mathfrak{a})  & I_n \end{pmatrix}.
\]
Now $\vartheta_{s_{0}}(t^{-1} j^{-1} t s_{0}) \neq 0\Leftrightarrow t^{-1} j^{-1} t s_{0} \in J^{\prime} s_{0} J^{\prime}$. Writing $j=n(x), x\in \varpi_{\k}^{-1}{\mathfrak H}^{1}(\beta,\a)$, we see that 
\[
t^{-1} n(-x) t s_{0}\in J^{\prime}s_{0}J^{\prime}\Leftrightarrow t^{-1}n(-x)ts_{0}\overline{n}(y)s_{0}\in J'
\]
for some $y\in \varpi_{\k}{\mathfrak J}(\beta,\a)$. Since $t^{-1}n(-x)t=\overline{n}(-x\varpi_{\k})$ and $s_{0}\overline{n}(y)s_0=n(\varpi_{\k}^{-1}y\varpi^{-1}_{\k})$, the latter condition holds if and only if $x\in {\mathfrak J}(\beta,\a)$. Consequently
\[
(\vartheta_t\star \vartheta_{s_0})(ts_0)={\rm vol}(J')^{-1}v_t v_{s_{0}}\dot{\lambda}_L(ts_{0})={\rm vol}(J')^{-1}v_tv_{s_{0}}\dot{\lambda}_L(a).
\]
We claim that $v_tv_{s_{0}}=v_{a}$. To see this note that 
    \[
    \begin{split}
   \mathrm{vol} _{J'}(J^{\prime} t   J^{\prime}   ) \mathrm{vol}_{J'}  (J^{\prime} s_{0}    J^{\prime}   )&=\mathrm{vol} (\mathfrak{H}^1(\beta,\mathfrak{a}) \slash \varpi_{\mathfrak{k}} \mathfrak{J}(\beta,\mathfrak{a}))\mathrm{vol} (   \varpi_{\mathfrak{k}}\mathfrak{J}(\beta,\mathfrak{a}) \slash  \varpi_{\mathfrak{k}}\mathfrak{H}^1(\beta,\mathfrak{a})    )\\
   &=[   \mathfrak{H}^1(\beta,\mathfrak{a}):   \varpi_{\mathfrak{k}}\mathfrak{H}^1(\beta,\mathfrak{a})     ]
      \end{split}
       \]
      which equals 
       $
       \mathrm{vol}_{J'}  \left(J^{\prime} a   J^{\prime}   \right)=[ \varpi^{-1}_{\mathfrak{k}}    \mathfrak{H}^1(\beta,\mathfrak{a}): \mathfrak{H}^1(\beta,\mathfrak{a})   )  ].
       $
        Thus $(\vartheta_{t} \star \vartheta_{s_{0}})(t s_{0})=\vartheta_a(a)$ proving \[
        \vartheta_t\star \vartheta_{s_{0}}=\vartheta_a.
        \]
 
 \par
   Next, we prove $(\vartheta_{s_1}\star \vartheta_t)(s_1t)=\vartheta_a(a)$: 
   \[
   \begin{split}
  ( \vartheta_{s_1} \star \vartheta_{t}) (s_1 t)
   &= \int\limits_{J's_1J'} \vartheta_{s_1}(u)  \vartheta_{t}(u^{-1} s_1 t)du
  =  \sum_{j \in \Sigma_{s_1}} \int\limits_{J^{\prime}} \vartheta_{s_1}(js_1z)\vartheta_{t}(z^{-1}s_1j^{-1} s_1  t )dz\\
  & = v_{s_1}   \sum_{j \in \Sigma_{s_1}} 
   \dot{\lambda}_L(s_1) \vartheta_{t}(s_1j^{-1} s_1 t ),
   \end{split}
   \]
   where $\Sigma_{s_1}
     =  \begin{pmatrix} I_n & \varpi_{\mathfrak{k}}^{-1}\mathfrak{H}^1(\beta,\mathfrak{a}) \big\slash  \varpi_{\mathfrak{k}}\mathfrak{J}(\beta,\mathfrak{a}) \\ & I_n \end{pmatrix} $ as in the proof of Lemma~\ref{lemma-s1}. As above, write $j=n(x), x\in \varpi_{\k}^{-1}{\mathfrak H}^{1}(\beta,\a)$, then 
     \begin{eqnarray}
     s_1n(-x) s_1 t \in J^{\prime} t J^{\prime}&\Leftrightarrow & t^{-1}n(y)s_1n(-x)s_1t\in J'\mbox{ for some }y\in \Sigma_t;\nonumber\\&\Leftrightarrow& t^{-1}n(y)tn(-\varpi^{-1}_{\k}x)\in J'\nonumber\\&\Leftrightarrow & x\in {\mathfrak H}^{1}(\beta,\a).\nonumber
     \end{eqnarray}
    Consequently 
         \begin{equation}\label{eqn-s1}
     \begin{split}
     ( \vartheta_{s_1} \star \vartheta_{t})(s_1 t) 
      &={\rm vol}(J')^{-1}
      v_{s_1}v_{t}
      \sum_{x \in \mathfrak{H}^1(\beta,\mathfrak{a}) \slash  \varpi_{\mathfrak{k}}\mathfrak{J}(\beta,\mathfrak{a})} \dot{\lambda}_L(s_1) \dot{\lambda}_L(t) 
      \\
      &=
     {\rm vol}(J')^{-1} v_{s_1}v_{t} \mathrm{vol}(\mathfrak{H}^1(\beta,\mathfrak{a}) \slash  \varpi_{\mathfrak{k}}\mathfrak{J}(\beta,\mathfrak{a}))
       \dot{\lambda}_L(a).
      \end{split}
     \end{equation}
     But 
      \[
      \begin{split}
     \frac{\mathrm{vol}_{J'}  (J^{\prime} s_1    J^{\prime}   )^{1/2} \mathrm{vol} _{J'}(J^{\prime} t    J^{\prime}   )^{1/2}}{\mathrm{vol}(\mathfrak{H}^1(\beta,\mathfrak{a}) \slash  \varpi_{\mathfrak{k}}\mathfrak{J}(\beta,\mathfrak{a}))}
      & =\frac{\mathrm{vol} (\varpi_{\mathfrak{k}}^{-1}\mathfrak{H}^1(\beta,\mathfrak{a}) \slash  \varpi_{\mathfrak{k}}\mathfrak{J}(\beta,\mathfrak{a}))^{1/2} 
      \mathrm{vol} (\mathfrak{H}^1(\beta,\mathfrak{a}) \slash \varpi_{\mathfrak{k}} \mathfrak{J}(\beta,\mathfrak{a}))^{1/2} }{\mathrm{vol}(\mathfrak{H}^1(\beta,\mathfrak{a}) \slash  \varpi_{\mathfrak{k}}\mathfrak{J}(\beta,\mathfrak{a}))} \\
   &  = [\varpi_{\mathfrak{k}}^{-1}\mathfrak{H}^1(\beta,\mathfrak{a}) : \mathfrak{H}^1(\beta,\mathfrak{a})]^{1/2}
      \end{split}
      \]
and hence the product of the volume factors in the last equality in (\ref{eqn-s1}) equals $v_a$ proving 
      \[
       \vartheta_{s_1} \star \vartheta_{t} =\vartheta_a.
\]

 To conclude, since $\Psi$ is support preserving, for $s=s_{0},t$, we have $\Psi(\vartheta'_s)=c_s \vartheta_s$ for some $c_s\in {\mathbb C}^{\times}$. Applying $\Psi$ to the relation $\vartheta'_a
 = \vartheta'_{s_1}\star \vartheta'_{t}$, it follows from Lemma~\ref{lemma-s1}, (\ref{eqn-a}) and the above relation that
 \[
 c_t \omega_{\pi}(-1)(\vartheta_{s_1}\star \vartheta_{t})=\vartheta_a \Rightarrow c_t=\omega_{\pi}(-1).
 \]
 Likewise, using $\vartheta_t\star \vartheta_{s_0}=\vartheta_a$, it follows that $c_{s_0}=\omega_{\pi}(-1)$. 
   \end{proof}

\begin{lemma}\label{lemma-alpha}
For elements $s_0$ and $s_1$ in $\widetilde{\sf W}$ as above, let $\alpha^{\vee}$ be the simple coroot corresponding to $s_1={\sf w}_{0}$. Then 
\[
  \vartheta_{s_1} \star \vartheta_{s_0}=\vartheta_{\alpha^{\vee}}.
  \]
\end{lemma}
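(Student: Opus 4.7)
The plan is to first observe, by a direct matrix multiplication, that $s_1 s_0 = \alpha^{\vee}$ in $\widetilde{\sf W}$, and that $\ell(s_1) + \ell(s_0) = 2 = \ell(\alpha^{\vee})$. Relation~\eqref{length} in the affine Hecke algebra $\mathcal{H}'$ then gives $\vartheta'_{s_1} \star \vartheta'_{s_0} = \vartheta'_{\alpha^{\vee}}$. Applying the isomorphism $\Psi : \mathcal{H}' \to \mathcal{H}$ and using Lemmas~\ref{lemma-s1} and~\ref{lemma-s2} (which give $\Psi(\vartheta'_{s_j}) = \omega_{\pi}(-1) \vartheta_{s_j}$ for $j \in \{0,1\}$), together with $\omega_{\pi}(-1)^2 = 1$, I obtain $\vartheta_{s_1} \star \vartheta_{s_0} = \Psi(\vartheta'_{\alpha^{\vee}})$. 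Since $\Psi$ preserves supports and the space of functions in $\mathcal{H}$ supported on $J' \alpha^{\vee} J'$ is one-dimensional, this common element equals $c \cdot \vartheta_{\alpha^{\vee}}$ for some $c \in \mathbb{C}^{\times}$, and the remaining task is to show $c = 1$.

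I will pin down $c$ by evaluating both sides at $\alpha^{\vee}$, following the template of Lemma~\ref{lemma-s2}. Decomposing $J' s_1 J' = \bigsqcup_{j \in \Sigma_{s_1}} j s_1 J'$ with $j = n(x)$ and $x$ ranging over $\varpi_E^{-1} \mathfrak{H}^1(\beta,\a) / \varpi_E \mathfrak{J}(\beta,\a)$, and using $s_1 n(-x) s_1 = \overline{n}(-x)$, the convolution formula gives
\[
  (\vartheta_{s_1} \star \vartheta_{s_0})(\alpha^{\vee}) = v_{s_1} \sum_{j \in \Sigma_{s_1}} \dot{\lambda}_L(s_1) \, \vartheta_{s_0}(\overline{n}(-x) s_0).
\]
The key step is to show that only the trivial coset contributes. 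The condition $\overline{n}(-x) s_0 \in J' s_0 J'$ is equivalent to $\overline{n}(-x) \in J' \cdot s_0 J' s_0$. Using the Iwahori factorization of $J'$ together with $s_0 n(x) s_0 = \overline{n}(\varpi_E^2 x)$ and $s_0 \overline{n}(y) s_0 = n(\varpi_E^{-2} y)$, one computes $J' \cdot s_0 J' s_0 = \overline{n}(\varpi_E \mathfrak{J}) \, J'_L \, n(\varpi_E^{-1} \mathfrak{J}) \, \overline{n}(\varpi_E \mathfrak{H}^1)$. Since $\overline{n}(-x)$ has trivial $L$- and $N$-parts, the uniqueness of the big-cell Bruhat factorization forces the middle factors in any such decomposition to vanish, yielding $x \in \varpi_E \mathfrak{J}$.

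The sole surviving term then contributes $v_{s_1} \dot{\lambda}_L(s_1) \vartheta_{s_0}(s_0) = \mathrm{vol}(J')^{-1} v_{s_1} v_{s_0} \dot{\lambda}_L(\alpha^{\vee})$, and the proof concludes with the volume identity $v_{s_1} v_{s_0} = v_{\alpha^{\vee}}$, which is the length-additive index equality $|\Sigma_{s_1}| \cdot |\Sigma_{s_0}| = [\varpi_E^{-1} \mathfrak{H}^1 : \varpi_E \mathfrak{J}] \cdot [\varpi_E \mathfrak{J} : \varpi_E \mathfrak{H}^1] = [\varpi_E^{-1} \mathfrak{H}^1 : \varpi_E \mathfrak{H}^1] = |\Sigma_{\alpha^{\vee}}|$. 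The main obstacle is the Bruhat support analysis: unlike Lemma~\ref{lemma-s2}, where one factor had length zero and the Iwahori factorization collapsed cleanly, here both $s_0$ and $s_1$ are genuine length-one reflections, so ruling out the nontrivial cosets in $\Sigma_{s_1}$ requires the big-cell Bruhat argument above.
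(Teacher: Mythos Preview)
Your evaluation of the convolution at $\alpha^{\vee}$---decomposing $J's_1J'$ into cosets indexed by $\Sigma_{s_1}$, showing only the trivial coset contributes, and verifying the index identity $v_{s_1}v_{s_0}=v_{\alpha^{\vee}}$---is exactly the paper's argument. The preliminary reduction via $\Psi$ and Lemmas~\ref{lemma-s1}--\ref{lemma-s2} is correct but redundant: the paper establishes the single-double-coset support directly from $s_1(J'\cap\overline{N})s_1\subset J'\cap N$ and $s_0(J'\cap N)s_0\subset J'\cap\overline{N}$, and once that is known your direct evaluation already pins down the convolution completely, without any need to introduce the constant $c$.
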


\begin{proof}
Since $s_1(J' \cap \overline{N})s_1 \subset J' \cap N$ and $s_0(J' \cap N) s_0 \subset J' \cap \overline{N}$, the function $\vartheta_{s_1} \star \vartheta_{s_0}$ is supported on $J' s_1 J'  s_0 J' =J'  s_1s_0 J'$. Now we evaluate at $s_1s_0=\alpha^{\vee}(\varpi_{\mathfrak k})$:
\[
\begin{split}
  \vartheta_{s_1} \star \vartheta_{s_0}(s_1s_0)&=\int\limits_{J's_1J'} \vartheta_{s_1} (u) \vartheta_{s_{0}}(u^{-1} s_1 s_{0})du
 =\sum_{j \in \Sigma_{s_1}} \int_{J^{\prime}}\vartheta_{s_1}(j s_1 z) \vartheta_{s_{0}}(z^{-1} s_1 j^{-1} s_1 s_{0}) dz\\
 &=v_{s_1} \sum_{j \in \Sigma_{s_1}} 
 \dot{\lambda}_L(s_1) \vartheta_{s_{0}}(s_1 j^{-1} s_1 s_{0}).
\end{split}
\]
As in Lemma~\ref{lemma-s2}, writing $j=n(x)$, $x \in \varpi_{\mathfrak{k}}^{-1}\mathfrak{H}^1(\beta,\mathfrak{a})$, we have 
\[
\begin{split}
  s_1 n(-x) s_1 s_{0} \in J's_0J'  &\Leftrightarrow s_0 \overline{n}(-y) s_1 n(-x) s_1 s_{0} \in J' \mbox{ for some } y \in \varpi_{\mathfrak{k}}\mathfrak{J}(\beta,\mathfrak{a}) \\
  &\Leftrightarrow n(-\varpi^{-2}_{\mathfrak{k}}(x+y)) \in J^{\prime},
  \end{split}
\]
which implicates $x=y=0$. Combining these together, we have
\[
(\vartheta_{s_1} \star \vartheta_{s_0})(s_1s_0)= { {\rm vol}(J')^{-1}        } v_{s_1} v_{s_0} \dot{\lambda}_L(s_1)  \dot{\lambda}_L(s_0)  
= { {\rm vol}(J')^{-1}        } v_{s_1} v_{s_0} \dot{\lambda}_L(s_1s_0). 
\]
But 
\[
\begin{array}{lll}
 {  \mathrm{vol}_{J'} (J^{\prime} s_1   J^{\prime}   ) \mathrm{vol}_{J'}  (J^{\prime} s_{0}    J^{\prime}   )}
 &=&\mathrm{vol}(
 \varpi_{\mathfrak{k}}^{-1}\mathfrak{H}^1(\beta,\mathfrak{a}) \big\slash  \varpi_{\mathfrak{k}}\mathfrak{J}(\beta,\mathfrak{a})   ) 
 \mathrm{vol} (   \varpi_{\mathfrak{k}}\mathfrak{J}(\beta,\mathfrak{a}) \slash  \varpi_{\mathfrak{k}}\mathfrak{H}^1(\beta,\mathfrak{a})    ),\\
 &=&[ \varpi_{\mathfrak{k}}^{-1}\mathfrak{H}^1(\beta,\mathfrak{a}): \varpi_{\mathfrak{k}}\mathfrak{H}^1(\beta,\mathfrak{a})  ],\\&=& {  \mathrm{vol}_{J'} (J^{\prime} \alpha^{\vee}(\varpi_{\mathfrak{k}})   J^{\prime}   ) }.
 \end{array}
\]
Thus $v_{s_1} v_{s_0}=v_{\alpha^{\vee}}$ and $(\vartheta_{s_1} \star \vartheta_{s_0})(s_1s_0)=\vartheta_{\alpha^{\vee}}(\alpha^{\vee})$ proving the lemma.
\end{proof}

\subsubsection{\bf The unramified principal series representation of $G'=GL_2(\mathfrak k)$}
\label{unramified}
We continue with the Haar measure on $G'$ that assigns ${\mathcal I}'$ unit volume. We have the following decompositions $G^{\prime}=\mathcal{I}^{\prime}\widetilde{\sf W}  \mathcal{I}^{\prime}$ and $K^{\prime}=\mathcal{I}^{\prime} \cup \mathcal{I}^{\prime} {\sf w}_0 \mathcal{I}^{\prime}$. Following Bernstein, for $\mu\in \Lambda'_{+}$, let  $\theta'_{\mu}={q_{\a}^{-\ell(\mu)/2} } \vartheta'_{\mu}$. 
For $\mu\in X_{\ast}$, write $\mu=\mu^{+}-\mu^{-}$ with $\mu^{+},\mu^{-}\in \Lambda'_{+}$, and then define $\theta'_{\mu}=\theta'_{\mu_1}\star\theta'^{-1}_{\mu_2}$.
This is well defined and $\theta'_{\mu}\star \theta'_{\nu}=\theta'_{\mu+\nu}$ for all $\mu,\nu\in X_{\ast}$. The commutation relation between $\vartheta'_{{\sf w}_{0}}$ and $\theta'_{\mu}$ is given by the formula (cf. \cite{Lusz}):  
 \begin{equation}\label{commutant}
 \theta'_{\mu}\star \vartheta'_{{\sf w}_{0}}-\vartheta'_{{\sf w}_{0}}\star\theta'_{{\sf w}_{0}'(\mu)}=(q_{\a}-1)\frac{\theta'_{\mu}-\theta'_{{\sf w}_{0}(\mu)}}{1-\theta'_{-\alpha^{\vee}}}.
  \end{equation}

Let $\mathcal{H}^{\prime}_{K^{\prime}}$ be the finite dimensional subalgebra of $\mathcal{H}^{\prime}$ generated by $\vartheta^{\prime}_{\sf w}$ for ${\sf w}  \in {\sf W}$ and let $\mathcal{H}'_{\rm ab}$ be the commutative subalgebra of $\mathcal{H}^{\prime}$ generated by $\vartheta^{\prime}_{\mu}$, $\mu \in \Lambda'_+$, together with their inverses. Then $\mathcal{H}_{\rm ab}'=\mathbb{C}[A^{\prime} \slash A_0^{\prime}]$ is the group algebra of $A^{\prime} \slash A_0^{\prime}$ consisting of functions $\phi : A^{\prime} \slash A_0^{\prime} \longrightarrow \mathbb{C}$ of finite support with product given by convolution. In particular, $\mathcal{H}'_{\rm ab} \cong \mathbb{C}[t_1^{\pm 1},t_2^{\pm 1}]$, where $t_1,t_2$, are indeterminates. Since ${\sf w}_{0}(\mu)=\mu-\langle\mu,\alpha\rangle \alpha^{\vee}$, the fraction $\frac{\theta'_{\mu}-\theta'_{{\sf w}_{0}(\mu)}}{1-\theta'_{-\alpha^{\vee}}}$ belongs to ${\mathcal H}_{\rm ab}'$. The following result follows from the proof of \cite[Proposition 3.7]{Lusz}. (See also \cite{ChanSavin2}.)
\begin{proposition}\label{prop-basis}
The elements $\vartheta'_{\sf w}\star \theta'_{\mu}$, ${\sf w}\in {\sf W}, \mu\in X_{*}$, form a ${\mathbb C}$-basis of ${\mathcal H}'$. Alternately, the multiplication map 
\[
{\sf m}:\mathcal{H}^{\prime}_{K^{\prime}} \otimes_{\mathbb C} {\mathcal H}_{\rm ab}'\longrightarrow {\mathcal H}'
\]
given by ${\sf m}(\vartheta\otimes \theta)=\vartheta\star\theta$ is a linear isomorphism. 
\end{proposition}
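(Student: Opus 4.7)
This proposition is a version of the Bernstein-Lusztig presentation of the Iwahori-Hecke algebra ${\mathcal H}'$, specialized to the rank-one case. My plan is to exhibit a triangular relationship between the proposed family $\{\vartheta'_{\sf w} \star \theta'_\mu\}$ and the standard basis $\{\vartheta'_{\sf x}\}_{{\sf x} \in \widetilde{\sf W}}$, indexed via the bijection $({\sf w},\mu) \leftrightarrow {\sf w}\cdot\mu$ coming from $\widetilde{\sf W} = X_{\ast} \rtimes {\sf W}$.

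The dominant case is the base of the argument. For $\mu \in \Lambda^{\prime}_+$ and ${\sf w} \in {\sf W}$, one can arrange (possibly by interchanging the order of multiplication using (\ref{commutant}), whose right-hand side lies in ${\mathcal H}^{\prime}_{\rm ab}$) so that the length additivity $\ell(\mu\cdot{\sf w}) = \ell(\mu)+\ell({\sf w})$ applies, and then relation (\ref{length}) gives $\vartheta'_\mu \star \vartheta'_{\sf w} = \vartheta'_{\mu\cdot{\sf w}}$. Hence $\theta'_\mu \star \vartheta'_{\sf w} = q_{\a}^{-\ell(\mu)/2}\,\vartheta'_{\mu\cdot{\sf w}}$, a nonzero scalar multiple of a standard basis vector. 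Combined with (\ref{commutant}), which exchanges the order of multiplication at the cost of an element of ${\mathcal H}^{\prime}_{\rm ab}$, this shows that, for $\mu$ dominant, $\vartheta'_{\sf w} \star \theta'_\mu$ admits a triangular expansion in $\{\vartheta'_{\sf x}\}$ whose leading term is a nonzero multiple of $\vartheta'_{{\sf w}\cdot\mu}$.

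For general $\mu = \mu^+ - \mu^- \in X_{\ast}$, I would first establish that $\theta'_{\mu^-}$ is invertible in ${\mathcal H}^{\prime}$: the reduced expression (\ref{H-reduced}) writes $\vartheta'_{\mu^-}$ as a product of $\vartheta'_t$ (a unit, since $t$ has length zero) and the simple reflections $\vartheta'_{s_0}, \vartheta'_{s_1}$, each invertible via the quadratic relation (\ref{quad-prime}). I would then induct on $\ell(\mu^-)$, using the commutation relation (\ref{commutant}) repeatedly to reduce $\vartheta'_{\sf w}\star\theta'_\mu$ to a $\mathbb{C}$-combination of products $\vartheta'_{{\sf w}'}\star\theta'_{\mu'}$ with $\ell({\mu'}^-) < \ell(\mu^-)$, together with an element of ${\mathcal H}^{\prime}_{\rm ab}$ that is already in the span $\{\vartheta'_1\star\theta'_\lambda\}$ by the dominant case. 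This yields the triangular expansion
\[
\vartheta'_{\sf w}\star\theta'_\mu = c_{{\sf w},\mu}\,\vartheta'_{{\sf w}\cdot\mu} + \sum_{{\sf y}\prec{\sf w}\cdot\mu} c_{\sf y}\,\vartheta'_{\sf y}, \qquad c_{{\sf w},\mu}\neq 0,
\]
for a suitable partial order $\prec$ on $\widetilde{\sf W}$ (e.g.\ a Bruhat-type order).

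The triangularity immediately gives both linear independence and spanning of $\{\vartheta'_{\sf w}\star\theta'_\mu\}_{{\sf w},\mu}$, proving the first assertion. The linear isomorphism ${\sf m}$ then follows since $\{\vartheta'_{\sf w}\}_{{\sf w}\in{\sf W}}$ is a basis of ${\mathcal H}^{\prime}_{K^{\prime}}$ and $\{\theta'_\mu\}_{\mu\in X_{\ast}}$ is a basis of ${\mathcal H}^{\prime}_{\rm ab}$ (the latter via the identification with $\mathbb{C}[A^{\prime}/A_0^{\prime}]$ noted before the statement). The main obstacle is tracking the triangularity through the induction on $\ell(\mu^-)$; the commutation relation (\ref{commutant}) is the essential tool, as its right-hand side lies in ${\mathcal H}^{\prime}_{\rm ab}$ and is therefore absorbed by the dominant case.
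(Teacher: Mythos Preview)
Your approach is the Bernstein--Lusztig argument that the paper simply cites (\cite[Proposition~3.7]{Lusz}), so the strategy is the right one. Two corrections, however. First, for $\mu$ dominant the length additivity that actually holds is $\ell({\sf w}\mu)=\ell({\sf w})+\ell(\mu)$, not $\ell(\mu{\sf w})=\ell(\mu)+\ell({\sf w})$; hence $\vartheta'_{\sf w}\star\vartheta'_\mu=\vartheta'_{{\sf w}\mu}$ directly by \eqref{length}, and no appeal to \eqref{commutant} is needed in the dominant case. Second, your induction on $\ell(\mu^-)$ is not clearly set up: the commutation relation \eqref{commutant} relates $\theta'_\mu\star\vartheta'_{s_1}$ to $\vartheta'_{s_1}\star\theta'_{s_1(\mu)}$ plus an abelian term, and it is not evident how this lowers $\ell(\mu^-)$. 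The standard (and cleaner) route, which is what Lusztig does, is to prove linear independence by right-multiplying an arbitrary finite relation $\sum c_{{\sf w},\mu}\vartheta'_{\sf w}\star\theta'_\mu=0$ by $\theta'_\nu$ with $\nu$ dominant enough that every $\mu+\nu$ is dominant; then each $\vartheta'_{\sf w}\star\theta'_{\mu+\nu}$ is a nonzero scalar times the distinct basis element $\vartheta'_{{\sf w}(\mu+\nu)}$, forcing all $c_{{\sf w},\mu}=0$. Spanning follows similarly: the elements $\vartheta'_{{\sf w}\lambda}$ for $\lambda$ dominant already exhaust a cofinal family, and inverting $\theta'_\nu$ recovers the rest.
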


Let ${\mathfrak s}'$ denote the inertial class of the pair $(A',1_{\mathfrak k}\times 1_{\mathfrak k})$ in $G'$ and let ${\mathfrak t}'$ denote the corresponding inertial class in $A'$. Then the category ${\mathfrak R}^{{\mathfrak s}'}(G')$ consists of those representations $(\pi,V)$ generated by their ${\mathcal I}'$-fixed vectors and $V^{\mathcal{I}^{\prime}}$ is naturally a $\mathcal{H}'$-module. The map $V \mapsto V^{\mathcal{I}^{\prime}}$ is an equivalence of categories $\mathfrak{R}^{{\mathfrak s}'}(G^{\prime}) \cong \mathcal{H}'-\mathrm{Mod}$.

\par 
Recall $X(A^{\prime})$ the group of unramified quasi-characters of $A^{\prime}$ which is equipped with the structure of a complex torus whose ring of regular functions is $\mathbb{C}[A^{\prime} \slash A_0^{\prime}]$. It can be identified with $(\mathbb{C} \slash \frac{2\pi i}{\log q_{\a}}\mathbb{Z})^2 \cong (\mathbb{C}^{\times})^2$. For $\chi \in X(A^{\prime})$, let $\iota(\chi):=\iota_{B^{\prime}}^{G^{\prime}}(\chi)$ be the unramified principal series representation of $G^{\prime}$ and let ${\mathcal F}(\chi) (={\mathcal F}_{B'}(\chi))$ denote the space of this representation. The dimension of ${\mathcal F}(\chi)^{K^{\prime}}$ is $1$ and it follows from the Iwasawa decomposition of $G'$ that the function $\phi_{K',\chi}$ given by
\[
 {\phi}_{K',\chi}(uak)=\delta_{B^{\prime}}^{1/2}(a)\chi(a) \quad \text{for} \; u \in U^{\prime}, a \in A^{\prime}\; \text{and} \; k \in K^{\prime},
\]
is a basis for ${\mathcal F}(\chi)^{K^{\prime}}$. On the other hand, the dimension of ${\mathcal F}(\chi)^{\mathcal{I}^{\prime}}$ is $2$ and the functions $\phi_{1,\chi}$ and $\phi_{{\sf w}_0,\chi}$ given by 
\[
  \phi_{1,\chi}\;(uak) =
  \begin{cases}
  \delta_{B^{\prime}}^{1/2}(a)\chi(a), &  k \in \mathcal{I}^{\prime} \\
  0,  & k \notin \mathcal{I}^{\prime}
  \end{cases} \\
  \quad
  \text{and}
  \quad
  \phi_{{\sf w}_0,\chi}(uak) =
   \begin{cases}
  \delta_{B^{\prime}}^{1/2}(a)\chi(a), &  k \in \mathcal{I}^{\prime}{\sf w}_0 \mathcal{I}^{\prime} \\
  0, & k \notin \mathcal{I}^{\prime}{\sf w}_0 \mathcal{I}^{\prime}.
  \end{cases} 
\]
forms a basis of ${\mathcal F}(\chi)^{{\mathcal I}'}$. Since $K'={\mathcal I}'\cup {\mathcal I}'{\sf w}_{0}{\mathcal I}'$, it follows that $\phi_{K',\chi}=\phi_{1,\chi}+\phi_{{\sf w}_{0},\chi}$. 

 The proposition below gives the complete structure of ${\mathcal F}(\chi)^{\mathcal{I}^{\prime}}$. The result is not new and an equivalent formulation in a more general context can be found in \cite{Reeder}. One can also find parallel discussions in \cite{Kim, Prasad}. We include a straightforward proof in our setting for the sake of completeness. 

\begin{proposition}
\label{GL2-structure}
Keeping the above notation, we have the following:
\begin{enumerate}[label=$(\mathrm{\alph*})$]
\item\label{GL2-structure-1} ${\phi}'_{{\sf w}_0,\chi}$ is an eigenvector for $\mathcal{H}'_{\rm ab}$. To be more precise, for $\mu \in \Lambda^{\prime}_+$ we have
\[
  \iota_{B^{\prime}}^{G^{\prime}}(\chi) ( \theta'_{\mu}) {\phi}'_{{\sf w}_0,\chi}=  {\sf w}_0(\chi)(\mu(\varpi_{\mathfrak k})) {\phi}'_{{\sf w}_0,\chi}.
\]
\item\label{GL2-structure-2} Let $\iota' :{\mathcal F}(\chi)^{\mathcal{I}^{\prime}}\rightarrow \mathcal{H}^{\prime}_{K^{\prime}}$ denote the map $\iota' (\phi)=\check{\phi}|_{K'}$, i.e., $\iota'(\phi)(k)=\phi(k^{-1})$. Then $\iota'$ is well defined and is
an $\mathcal{H}^{\prime}_{K^{\prime}}$-module isomorphism for the left action on $\mathcal{H}^{\prime}_{K^{\prime}}$ given by convolution. 

\item\label{GL2-structure-3} As $\mathcal{H}^{\prime}$-modules, we get an isomorphism
\[
  {\mathcal F}(\chi)^{\mathcal{I}^{\prime}} \cong   \mathcal{H}^{\prime}  \otimes_{\mathcal{H}_{\rm ab}'} {\mathbb C}_{{\sf w}_0(\chi)}.
\]
\end{enumerate}
\end{proposition}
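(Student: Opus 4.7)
I would establish (a), (b), (c) in that order.

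For (a), I plan a direct computation based on the coset decomposition $\mathcal{I}' \mu(\varpi_{\mathfrak{k}}) \mathcal{I}' = \bigsqcup_j j \mu(\varpi_{\mathfrak{k}}) \mathcal{I}'$, valid because $\mu \in \Lambda'_+$, with $j$ ranging over representatives of $(\mathcal{I}' \cap U') / \mu(\varpi_{\mathfrak{k}})(\mathcal{I}' \cap U')\mu(\varpi_{\mathfrak{k}})^{-1}$. The convolution action gives
\[
\bigl(\iota_{B'}^{G'}(\chi)(\vartheta'_\mu)\phi\bigr)(g) = \sum_{j} \phi\bigl(g j \mu(\varpi_{\mathfrak{k}})\bigr).
\]
Since a $\phi \in \mathcal{F}(\chi)^{\mathcal{I}'}$ is determined by its values at representatives of $B' \backslash G' / \mathcal{I}' = \{1, {\sf w}_0\}$, it is enough to check both sides at these two points. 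At $g = 1$ each $j \mu(\varpi_{\mathfrak{k}}) \in B'$ lies outside $B'{\sf w}_0\mathcal{I}'$, so both sides vanish. At $g = {\sf w}_0$, a case analysis of the Iwasawa form of ${\sf w}_0 j \mu(\varpi_{\mathfrak{k}})$ shows that every $j \neq 1$ falls into $B' \mathcal{I}'$ (disjoint from the support of $\phi'_{{\sf w}_0,\chi}$), leaving only the $j = 1$ contribution ${\sf w}_0 \mu(\varpi_{\mathfrak{k}}) = {\sf w}_0(\mu(\varpi_{\mathfrak{k}})){\sf w}_0$. This evaluates to $\delta_{B'}^{1/2}({\sf w}_0(\mu(\varpi_{\mathfrak{k}})))\chi({\sf w}_0(\mu(\varpi_{\mathfrak{k}}))) = q_{\mathfrak{a}}^{\ell(\mu)/2}\,{\sf w}_0(\chi)(\mu(\varpi_{\mathfrak{k}}))$, and multiplying by the normalization $q_{\mathfrak{a}}^{-\ell(\mu)/2}$ from $\theta'_\mu = q_{\mathfrak{a}}^{-\ell(\mu)/2}\vartheta'_\mu$ yields the stated eigenvalue.

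For (b), I would first compute the images of the basis $\{\phi'_{1,\chi}, \phi'_{{\sf w}_0,\chi}\}$ directly. Using $(\mathcal{I}')^{-1} = \mathcal{I}'$ and $(\mathcal{I}' {\sf w}_0 \mathcal{I}')^{-1} = \mathcal{I}' {\sf w}_0 \mathcal{I}'$, the Iwahori factorization, and the fact that $\delta_{B'}^{1/2}\chi$ is trivial on $B' \cap \mathcal{I}'$ (by unramifiedness of $\chi$), one verifies
\[
\iota'(\phi'_{1,\chi}) = \vartheta'_1 \quad \text{and} \quad \iota'(\phi'_{{\sf w}_0,\chi}) = \vartheta'_{{\sf w}_0}.
\]
This simultaneously shows that $\iota'$ takes values in $\mathcal{H}'_{K'}$ and is a linear isomorphism (mapping basis to basis). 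The intertwining property follows from a single change of variable: for $f \in \mathcal{H}'_{K'}$ and $\phi \in \mathcal{F}(\chi)^{\mathcal{I}'}$,
\[
\iota'\bigl(\iota_{B'}^{G'}(\chi)(f)\phi\bigr)(k) = \int_{G'} f(x)\phi(k^{-1}x)\,dx = (f \star \iota'(\phi))(k),
\]
which is immediate from the definitions of the two actions.

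For (c), I would define $\Phi : \mathcal{H}' \otimes_{\mathcal{H}'_{\rm ab}} \mathbb{C}_{{\sf w}_0(\chi)} \longrightarrow \mathcal{F}(\chi)^{\mathcal{I}'}$ by $f \otimes 1 \mapsto \iota_{B'}^{G'}(\chi)(f)\phi'_{{\sf w}_0,\chi}$. Part (a) guarantees that $\iota_{B'}^{G'}(\chi)(\theta)\phi'_{{\sf w}_0,\chi} = {\sf w}_0(\chi)(\theta)\phi'_{{\sf w}_0,\chi}$ for $\theta \in \mathcal{H}'_{\rm ab}$, so $\Phi$ descends to the tensor product; by construction it is a homomorphism of $\mathcal{H}'$-modules. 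By Proposition~\ref{prop-basis} the source is two-dimensional with basis $\{1 \otimes 1,\ \vartheta'_{{\sf w}_0} \otimes 1\}$; their images $\phi'_{{\sf w}_0,\chi}$ and $\iota_{B'}^{G'}(\chi)(\vartheta'_{{\sf w}_0})\phi'_{{\sf w}_0,\chi}$ are linearly independent because evaluation at $1 \in G'$ gives $0$ and $\mathrm{vol}(\mathcal{I}' {\sf w}_0 \mathcal{I}') = q_{\mathfrak{a}} \neq 0$ respectively, so $\Phi$ is an isomorphism. The main obstacle I anticipate lies in (a): carrying out the Iwasawa decomposition of ${\sf w}_0 j \mu(\varpi_{\mathfrak{k}})$ carefully for each coset representative $j$ and verifying that the modulus character $\delta_{B'}^{1/2}$ exactly cancels the normalization $q_{\mathfrak{a}}^{-\ell(\mu)/2}$ so as to yield the Weyl-twisted eigenvalue ${\sf w}_0(\chi)(\mu(\varpi_{\mathfrak{k}}))$ rather than $\chi(\mu(\varpi_{\mathfrak{k}}))$.
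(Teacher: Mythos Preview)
Your proof is correct and follows essentially the same line as the paper's. The only differences are cosmetic: for (a) the paper simply cites \cite[Lemma 3.9]{cass1} together with the normalization of $\theta'_\mu$, whereas you unpack that computation directly; and for (c) the paper verifies that the change-of-basis matrix is invertible by using (b) to rewrite $\vartheta'_{{\sf w}_0}\cdot\phi'_{{\sf w}_0,\chi}=q_{\a}\phi'_{1,\chi}+(q_{\a}-1)\phi'_{{\sf w}_0,\chi}$ via the quadratic relation, while you obtain linear independence by the cleaner trick of evaluating at $g=1$.
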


\begin{proof}
Part \ref{GL2-structure-1} follows from \cite[Lemma 3.9]{cass1} and the definition of $\theta'_{\mu}$. For \ref{GL2-structure-2}, 
we first check that $\iota(\phi)$ is an element of $\mathcal{H}^{\prime}_{K^{\prime}}$, i.e., that it is a bi ${\mathcal I}'$-invariant function on $K'$. Let $i,i^{\prime} \in \mathcal{I}^{\prime}$ and $k \in K^{\prime}$, since $K'=\mathcal{I}^{\prime} {\sf W}\mathcal{I}^{\prime}$, we may take $k\in {\sf W}=\{1,{\sf w}_{0}\}$. Using the Iwahori factorization, decompose $i^{-1}=i_{U^{\prime}}i_{T^{\prime}}i_{\overline{U}^{\prime}}$, $i_{U^{\prime}} \in U^{\prime}$, $i_{T^{\prime}} \in T^{\prime}$ and $i_{\overline{U}^{\prime}} \in \overline{U}^{\prime}$. Then 
\[
\begin{split}
  \iota' (\phi)(i' k i)&=\phi(i^{-1} k^{-1})=\delta_{B^{\prime}}^{1/2}(i_{T^{\prime}})\chi(i_{T^{\prime}})\phi(i_{\overline{U}^{\prime}}k^{-1})\\
  &=\phi(i_{\overline{U}^{\prime}}k^{-1})=\phi(k^{-1} ki_{\overline{U}^{\prime}}k^{-1})=\phi(k^{-1})= \iota' (\phi)(k).
\end{split}
\]
Here, the last equality follows since $ki_{\overline{U}^{\prime}}k^{-1} \in \mathcal{I}^{\prime}$ for $k\in {\sf W}$. To verify that $\iota'$ is an $\mathcal{H}^{\prime}_{K^{\prime}}$-module homomorphism: Suppose that $\vartheta'_0 \in \mathcal{H}^{\prime}_{K^{\prime}}$ and $\phi\in {\mathcal F}(\chi)^{{\mathcal I}'}$. We have
\[
 (\iota'(\vartheta'_0\cdot \phi))(k) =(\vartheta'_0\cdot\phi)(k^{-1})=\int\limits_{K^{\prime}} \vartheta'_0(x) \phi(k^{-1}x) dx
 =\int\limits_{K'} \vartheta'_0(x) \iota'(\phi) (x^{-1}k)dx=( \vartheta'_0\star \iota' (\phi))(k).
\]
To see that $\iota'$ is an isomorphism, it is enough to check that it takes a basis of ${\mathcal F}(\chi)^{{\mathcal I}'}$ to a basis of $\mathcal{H}^{\prime}_{K^{\prime}}$. This is clear since $\iota'(\phi'_{{\sf w},\chi})=\vartheta'_{\sf w}$ for ${\sf w}\in {\sf W}$. Finally, consider the map from 
\begin{equation}\label{eqn-iso}
\mathcal{H}^{\prime}  \otimes_{\mathcal{H}_{\rm ab}'} {\mathbb C}_{{\sf w}_{0}(\chi)}\rightarrow {\mathcal F}(\chi)^{\mathcal{I}^{\prime}}
\end{equation}
given by $\vartheta' \otimes 1\mapsto \vartheta'\cdot \phi'_{{\sf w}_{0},\chi}$. This is clearly a morphism of ${\mathcal H}'$-modules. To see that the map is an isomorphism, observe that Part \ref{GL2-structure-1} and Proposition~\ref{prop-basis} together imply that $\{\vartheta'_{\sf w}\otimes 1: {\sf w}\in {\sf W}\}$ is a ${\mathbb C}$-basis of ${\mathcal H}^{\prime}\otimes _{{\mathcal A}'}{\mathbb C}_{{\sf w}_{0}(\chi)}$. On the other hand, we also have the basis $\{\phi'_{1,\chi},\phi'_{{\sf w}_{0},\chi}\}$ of ${\mathcal F}(\chi)^{{\mathcal I}'}$ and 
\begin{eqnarray}
\vartheta'_{1}\otimes 1\mapsto \vartheta'_{1}\cdot \phi'_{{\sf w}_{0},\chi}&=&\phi'_{{\sf w}_{0},\chi};\nonumber\\
\vartheta'_{{\sf w}_{0}}\otimes 1\mapsto \vartheta'_{{\sf w}_{0}}\cdot \phi'_{{\sf w}_{0},\chi}&=&c_1\phi'_{1,\chi}+c_2\phi'_{{\sf w}_{0},\chi}.\nonumber 
\end{eqnarray}
To determine the complex numbers $c_1,c_2$, we apply the map $\iota'$ to the second equation and use \ref{GL2-structure-2} to obtain:
\[
\vartheta'_{{\sf w}_{0}}\star \vartheta'_{{\sf w}_{0}}=c_1\vartheta'_{1}+c_2\vartheta'_{{\sf w}_0}
\]
from which it follows that $c_1=q_{\a}, c_2=q_{\a}-1$. Hence the change of basis matrix with respect to these bases is invertible establishing that the map (\ref{eqn-iso}) is an isomorphism. 
\end{proof}

\begin{remark}
Evaluating the second equation at ${\sf w}_{0}$, we obtain the volume formula 
\[
{\rm vol}(\overline{B}'{\mathcal I}'\cap {\mathcal I}'{\sf w}_{0}{\mathcal I}')=q_{\a}-1.
\] 
\end{remark}
Henceforth we assume that $\chi \neq  {\sf w}_0(\chi)$ (such a $\chi$ is also called {\it regular}).
Consider the intertwining operator $A(\chi,{\sf w}_{0}) : {\mathcal F}(\chi) \longrightarrow {\mathcal F}({\sf w}_{0}(\chi))$ defined as in (\ref{eqn-int}) for the data $(G',A',\chi)$, i.e., 
\[
 A(\chi,{\sf w}_{0})(\phi)(g)=\int\limits_{U^{\prime}} \phi({\sf w}_0ug)\;du, \quad \phi \in  {\iota}(\chi), \; g \in G^{\prime}.
\]
It defines a regular function on the domain of regular characters. Since $\chi$ is regular, every $G^{\prime}$-morphism from ${\mathcal F}(\chi)$ to ${\mathcal F}({\sf w}_{0}(\chi))$ is a scalar multiple of $A(\chi,{\sf w}_{0})$. The operator $A(\chi,{\sf w}_{0})$ is in particular a $K^{\prime}$-homomorphism, therefore it takes $\phi'_{K^{\prime},\chi}$ to a scalar multiple of $\phi'_{K^{\prime},{\sf w}_{0}(\chi)}$. This scalar has been computed by Casselman for a general unramified principal series representation in \cite{cass1}. In the current setting, his formula takes the following shape:

\begin{lemma}
\label{intertwining-spherical}
Suppose $\chi=\chi_1 \otimes \chi_2$ in $X(A')$ is regular. Then 
\[
  A(\chi,{\sf w}_{0})(\phi'_{1,{\chi}})=(c_{{\sf w}_0}(\chi)-1)\phi'_{1,\chi}+\frac{1}{q_{\a}}\phi'_{{\sf w}_0,\chi}  
  \quad \text{and} \quad
 A(\chi,{\sf w}_{0})(\phi'_{{\sf w}_{0},\chi})=\phi'_{1,\chi}+\left(c_{{\sf w}_0}(\chi)-\frac{1}{q_{\a}} \right) \phi'_{{\sf w}_{0},\chi}, 
\]
where  
$
c_{{\sf w}_0}(\chi)=\dfrac{1-q^{-1}_{\a}\chi_1(\varpi_{\mathfrak k})\chi_2^{-1}(\varpi_{\mathfrak k})}{1-\chi_1(\varpi_{\mathfrak k})\chi_2^{-1}(\varpi_{\mathfrak k})}.
$
Consequently 
\[
  A(\chi,{\sf w}_{0})(\phi'_{K',\chi})= c_{{\sf w}_0}(\chi) \phi'_{K',{\sf w}_{0}(\chi)}.
\]
\end{lemma}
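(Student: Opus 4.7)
The plan is to exploit the fact that $A(\chi,{\sf w}_0)$ is a $K'$-intertwiner, so it sends the two-dimensional space ${\mathcal F}(\chi)^{{\mathcal I}'}$ into ${\mathcal F}({\sf w}_0(\chi))^{{\mathcal I}'}$, whose basis $\{\phi'_{1,{\sf w}_0(\chi)},\phi'_{{\sf w}_0,{\sf w}_0(\chi)}\}$ is characterized by the values at $1$ and ${\sf w}_0$. I would therefore write
\[
A(\chi,{\sf w}_0)(\phi'_{1,\chi}) = a_{1,1}\,\phi'_{1,{\sf w}_0(\chi)} + a_{1,{\sf w}_0}\,\phi'_{{\sf w}_0,{\sf w}_0(\chi)},
\]
and similarly for $\phi'_{{\sf w}_0,\chi}$, then identify the four coefficients by evaluating the defining integral at $k_0\in\{1,{\sf w}_0\}$.

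To compute each coefficient I would parametrize $u=\bigl(\begin{smallmatrix}1&x\\&1\end{smallmatrix}\bigr)$, $x\in\mathfrak k$, and determine the subset of $\mathfrak k$ on which the integrand $\phi'_{?,\chi}({\sf w}_0 u k_0)$ is nonzero. The key computational input is the explicit Bruhat--Iwahori decomposition
\[
\begin{pmatrix} 0 & 1 \\ 1 & x \end{pmatrix} = \begin{pmatrix} 1 & x^{-1} \\ 0 & 1 \end{pmatrix}\begin{pmatrix} -x^{-1} & 0 \\ 0 & x \end{pmatrix}\begin{pmatrix} 1 & 0 \\ x^{-1} & 1 \end{pmatrix},\quad x\neq 0,
\]
and the analogous decomposition for ${\sf w}_0 u{\sf w}_0=\bigl(\begin{smallmatrix}1&0\\x&1\end{smallmatrix}\bigr)$, combined with $\delta_{B'}^{1/2}(\mathrm{diag}(-x^{-1},x))=|x|^{-1}$ and the unramifiedness $\chi(\mathrm{diag}(-x^{-1},x))=(\chi_1^{-1}\chi_2)(x)$. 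For instance, $\phi'_{1,\chi}({\sf w}_0 u)$ vanishes for $x\in\mathfrak o_{\mathfrak k}$ (as then ${\sf w}_0 u\in{\mathcal I}'{\sf w}_0{\mathcal I}'$) and reduces for $v(x)\le -1$ to $|x|^{-1}(\chi_1^{-1}\chi_2)(x)$, so with $\alpha=(\chi_1^{-1}\chi_2)(\varpi_{\mathfrak k})$ the integral becomes
\[
a_{1,1}=(1-q_{\a}^{-1})\sum_{n\le -1}\alpha^{n},
\]
which, summed geometrically and rewritten, is precisely $c_{{\sf w}_0}(\chi)-1$. The evaluations at ${\sf w}_0$ are even simpler: for $\phi'_{1,\chi}$ the support condition forces $x\in\mathfrak p_{\mathfrak k}$, giving $a_{1,{\sf w}_0}=q_{\a}^{-1}$; while $a_{{\sf w}_0,1}=\mathrm{vol}(\mathfrak o_{\mathfrak k})=1$, and $a_{{\sf w}_0,{\sf w}_0}$ is the analogous sum over $v(x)\le 0$, which gives $(1-q_{\a}^{-1})+\bigl(c_{{\sf w}_0}(\chi)-1\bigr)=c_{{\sf w}_0}(\chi)-q_{\a}^{-1}$.

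The main technical point is that the integral defining $A(\chi,{\sf w}_0)$ converges only on a half-space of $X(A')$, where also $|\alpha|<1$; the geometric series above is justified there, and the formulas extend to all regular $\chi$ by the fact that $A(\chi,{\sf w}_0)$ is a rational function of $\chi$ on $X(A')$ (noted in the paragraph following \eqref{eqn-int}). Finally, the displayed consequence $A(\chi,{\sf w}_0)(\phi'_{K',\chi})=c_{{\sf w}_0}(\chi)\phi'_{K',{\sf w}_0(\chi)}$ is immediate from $\phi'_{K',\chi}=\phi'_{1,\chi}+\phi'_{{\sf w}_0,\chi}$: the coefficients of $\phi'_{1,{\sf w}_0(\chi)}$ and $\phi'_{{\sf w}_0,{\sf w}_0(\chi)}$ sum to $(c_{{\sf w}_0}(\chi)-1)+1=c_{{\sf w}_0}(\chi)$ and $q_{\a}^{-1}+(c_{{\sf w}_0}(\chi)-q_{\a}^{-1})=c_{{\sf w}_0}(\chi)$, respectively.
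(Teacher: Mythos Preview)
Your approach is correct and in fact more explicit than what the paper does: the paper does not give its own proof of this lemma at all, but simply records it as the $GL_2(\mathfrak k)$ specialization of Casselman's general computation in \cite{cass1}. Your direct evaluation of the four matrix coefficients via the Bruhat--Iwahori decomposition of ${\sf w}_0 u$ and ${\sf w}_0 u{\sf w}_0$ is the standard hands-on verification and all four values you obtain are right.

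One small slip: in the convergence discussion you write ``where also $|\alpha|<1$'', but with your convention $\alpha=(\chi_1^{-1}\chi_2)(\varpi_{\mathfrak k})$ the geometric series $\sum_{n\le -1}\alpha^n=\sum_{m\ge 1}\alpha^{-m}$ converges precisely when $|\alpha|>1$, which is the same as $|\chi_1(\varpi_{\mathfrak k})\chi_2^{-1}(\varpi_{\mathfrak k})|<1$, the usual half-space for convergence of $A(\chi,{\sf w}_0)$. This is only a typo in the inequality direction; the logic (compute in the region of absolute convergence, then continue rationally to all regular $\chi$) is exactly right.
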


\par
Fix an additive character $\psi'$ of ${\mathfrak k}$ that is unramified and let us look at the corresponding local coefficient $C_{\psi'}(\chi)$. Let ${\iota}(\chi)_{{\sf w}_0}$ denote $B^{\prime}$-submodule of functions in ${\iota}(\chi)$ supported on the big cell $B^{\prime} {\sf w}_0 U^{\prime}$. As in \S\ref{LS}, there is a unique non-zero Whitaker functional $\Omega'_{\chi}$ defined on $\iota(\chi)$ so that, for $\phi\in {\iota}(\chi)_{{\sf w}_{0}}$, 
\[
\Omega'_{\chi}(\phi)=\int\limits_{U^{\prime}} \phi({\sf w}_0u)\psi^{-1}(u)du.
\]
This formula holds on all of $\iota(\chi)$ as a principal value integral. By definition of the local coefficient $C_{\psi'}(\chi)$, we have 
\begin{equation}\label{lcprime}
  C_{\psi'}(\chi)(\Omega^{\prime}_{{\sf w}_{0}(\chi)} \circ A(\chi,{\sf w}_{0}))=\Omega^{\prime}_{\chi}.
\end{equation} 
As mentioned in the beginning of Section \ref{sec:equal}, the explicit form of $C_{\psi'}(\chi)$ is known for a general unramified principal series representation by the work of Casselman and Shalika \cite{cass2}. In our setting, their result is as follows. 
\begin{lemma}
\label{GL(2)-local-coefficient}
For any $\chi=\chi_1 \otimes \chi_2\in X(A')$, we have 
\[
  \Omega_{\chi}^{\prime}(\phi'_{1,\chi})=-q_{\a}^{-1}\chi_1(\varpi_{\mathfrak k})\chi_2^{-1}(\varpi_{\mathfrak k}) \quad \text{and} \quad
  \Omega_{\chi}^{\prime}(\phi'_{{\sf w}_0,\chi})=1.
\]
If in addition $\chi$ is regular, by evaluating both sides of the equation (\ref{lcprime}) at the spherical function $\phi'_{K^{\prime},\chi}=\phi'_{1,\chi}+\phi'_{{\sf w}_0,\chi}$, we obtain 
\[
  C_{\psi'}(\chi)=\frac{1-\chi_1(\varpi_{\mathfrak k})\chi_2^{-1}(\varpi_{\mathfrak k})}{1-q_{\a}^{-1}\chi_1^{-1}(\varpi_{\mathfrak k})\chi_2(\varpi_{\mathfrak k})}.
\]
\end{lemma}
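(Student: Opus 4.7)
The plan is to evaluate the two Whittaker values directly from the integral definition, then deduce $C_{\psi'}(\chi)$ from \eqref{lcprime} at the spherical vector. Throughout, I parametrize $U'$ by $u(x) = \begin{psmallmatrix} 1 & x \\ 0 & 1 \end{psmallmatrix}$, $x \in \mathfrak{k}$, and exploit the Bruhat-type identity
\[
{\sf w}_0 u(x) = \begin{pmatrix} 1 & x^{-1} \\ 0 & 1 \end{pmatrix} \begin{pmatrix} -x^{-1} & 0 \\ 0 & x \end{pmatrix} \begin{pmatrix} 1 & 0 \\ x^{-1} & 1 \end{pmatrix}, \quad x \neq 0,
\]
whose rightmost factor lies in $\mathcal{I}'$ precisely when $v_{\mathfrak{k}}(x) < 0$. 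This is the concrete Iwasawa decomposition that sorts ${\sf w}_0 U'$ into the two $\mathcal{I}'$-cells.

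First I would compute $\Omega'_\chi(\phi'_{{\sf w}_0,\chi})$. Since $\phi'_{{\sf w}_0,\chi}$ is supported on $B' \cdot \mathcal{I}'{\sf w}_0\mathcal{I}'$, the above dichotomy shows that $\phi'_{{\sf w}_0,\chi}({\sf w}_0 u(x))$ equals $1$ for $x \in \o_{\mathfrak{k}}$ (since then $u(x) \in \mathcal{I}'$ and ${\sf w}_0 u(x) \in {\sf w}_0\mathcal{I}' \subset \mathcal{I}'{\sf w}_0\mathcal{I}'$) and vanishes for $v_{\mathfrak{k}}(x) < 0$. Hence the integral collapses to $\int_{\o_{\mathfrak{k}}} \psi'^{-1}(x)\, dx = 1$, using that $\psi'$ is unramified and $dx$ is self-dual.

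Next I would compute $\Omega'_\chi(\phi'_{1,\chi})$ as a principal value. Now $\phi'_{1,\chi}$ is supported on $B'\mathcal{I}'$, so the roles flip: the integrand vanishes on $x \in \o_{\mathfrak{k}}$, and for $v_{\mathfrak{k}}(x) = -m$ with $m \geq 1$ the Bruhat identity yields
\[
\phi'_{1,\chi}({\sf w}_0 u(x)) = \delta_{B'}^{1/2}(\mathrm{diag}(-x^{-1},x))\,\chi(\mathrm{diag}(-x^{-1},x)) = q_\a^{-m}(\chi_1\chi_2^{-1})^m(\varpi_{\mathfrak{k}}),
\]
using $\chi_1(-1)=1$ since $\chi_1$ is unramified. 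Writing $x = \varpi_{\mathfrak{k}}^{-m}u$ with $u \in \o_{\mathfrak{k}}^\times$, the Jacobian $q_\a^m$ cancels the factor $q_\a^{-m}$, and each shell reduces to the elementary Gauss sum $\int_{\o_{\mathfrak{k}}^\times} \psi'^{-1}(\varpi_{\mathfrak{k}}^{-m}u)\, du$. By character orthogonality (and the conductor-zero hypothesis on $\psi'$) this equals $-q_\a^{-1}$ for $m = 1$ and $0$ for $m \geq 2$. Hence only the first shell survives, giving $\Omega'_\chi(\phi'_{1,\chi}) = -q_\a^{-1}\chi_1\chi_2^{-1}(\varpi_{\mathfrak{k}})$.

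Finally, to extract $C_{\psi'}(\chi)$, I evaluate \eqref{lcprime} at $\phi'_{K',\chi} = \phi'_{1,\chi} + \phi'_{{\sf w}_0,\chi}$. The left side becomes $1 - q_\a^{-1}\chi_1\chi_2^{-1}(\varpi_{\mathfrak{k}})$, while by Lemma \ref{intertwining-spherical} the right side equals $c_{{\sf w}_0}(\chi)\, \Omega'_{{\sf w}_0(\chi)}(\phi'_{K',{\sf w}_0(\chi)})$; applying the same two formulas to ${\sf w}_0(\chi) = \chi_2 \otimes \chi_1$ gives $\Omega'_{{\sf w}_0(\chi)}(\phi'_{K',{\sf w}_0(\chi)}) = 1 - q_\a^{-1}\chi_2\chi_1^{-1}(\varpi_{\mathfrak{k}})$. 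The numerator of $c_{{\sf w}_0}(\chi)$ cancels against the left side, producing the claimed formula. The only delicate point is the principal-value interpretation of the integral defining $\Omega'_\chi(\phi'_{1,\chi})$; this causes no trouble here because the Gauss-sum cancellations reduce it to a finite (in fact, single-term) sum, a feature tied precisely to the conductor-zero choice of $\psi'$.
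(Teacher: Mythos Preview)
Your proof is correct and is precisely the Casselman--Shalika computation specialized to $GL_2(\mathfrak{k})$, which the paper simply cites rather than writes out; the final step of evaluating \eqref{lcprime} at $\phi'_{K',\chi}$ is exactly what the statement of the lemma indicates. The minor slip in calling $\Omega'_\chi(\phi'_{K',\chi})$ the ``left side'' of \eqref{lcprime} (it is the right side as written) is harmless, since your algebraic manipulation is correct.
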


\subsubsection{\bf Computation of the local coefficient for non-split cases} 
\label{computation}

In this subsection, we transfer the above results from $G'$ to $G$ using the support preserving algebra isomorphism 
\[
{\mathcal H}^{\prime} \xrightarrow{\Psi} {\mathcal H}.
\]
Recall the basis elements $\{\vartheta'_{\sf w}\}$ and $\{\vartheta_{\sf w}\}$ in ${\mathcal H}'$ and ${\mathcal H}$, respectively. As in the case of $G'$, define $\theta_{\mu}:=q_{\a}^{-\ell(\mu)/2}\vartheta_{\mu}$, $\mu \in \Lambda_{+}^{\prime}$. For an arbitrary $\mu\in X_{\ast}$, write $\mu=\mu_1-\mu_2, \mu_1,\mu_2\in \Lambda'_{+}$, and set $\theta_{\mu}=\theta_{\mu_1}\star \theta_{\mu_2}^{-1}$. 

\begin{lemma}\label{lemma-abelian}
Keep the above notation and assume that the Haar measures on $G'$ is normalized so that ${\mathcal I}'$ has unit volume. Then the the following relations hold:
\begin{align*}
  \theta_{\mu_1} \star \theta_{\mu_2}&=\theta_{\mu_1+\mu_2} \mbox{ for } \mu_1, \mu_2 \in X_{\ast};\nonumber \\
  \vartheta_{{\sf w}_0} \star \vartheta_{{\sf w}_0}&=q_{\a}\vartheta_1+\omega_{\pi}(-1)(q_{\a}-1)\vartheta_{{\sf w}_0}.\nonumber 
 \end{align*}
 \end{lemma}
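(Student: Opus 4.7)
Plan: The second identity is immediate: apply the algebra isomorphism $\Psi$ to the quadratic relation (\ref{quad-prime}) in $\mathcal{H}'$, invoke Lemma~\ref{lemma-s1} (which gives $\Psi(\vartheta'_{s_1})=\omega_\pi(-1)\vartheta_{{\sf w}_0}$), and use $\omega_\pi(-1)^2=\omega_\pi(1)=1$; the resulting equality in $\mathcal{H}$ is precisely $\vartheta_{{\sf w}_0}\star \vartheta_{{\sf w}_0}=q_{\mathfrak a}\vartheta_1+\omega_\pi(-1)(q_{\mathfrak a}-1)\vartheta_{{\sf w}_0}$.

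For the first identity, the strategy is to transport the corresponding Bernstein relation in $\mathcal{H}'$ across $\Psi$. Since $\Psi$ is support-preserving, for each $\mu\in \Lambda'_+$ there is a unique scalar $c_\mu\in \mathbb{C}^\times$ with $\Psi(\vartheta'_\mu)=c_\mu\vartheta_\mu$; applying $\Psi$ to the length-additive identity $\vartheta'_{\mu_1}\star \vartheta'_{\mu_2}=\vartheta'_{\mu_1+\mu_2}$ in $\mathcal{H}'$ yields
\[
c_{\mu_1}c_{\mu_2}(\vartheta_{\mu_1}\star \vartheta_{\mu_2})=c_{\mu_1+\mu_2}\vartheta_{\mu_1+\mu_2}
\]
in $\mathcal{H}$. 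Because $\theta_\mu=q_{\mathfrak a}^{-\ell(\mu)/2}\vartheta_\mu$ is defined by the same formula in both algebras and the length function is additive on $\Lambda'_+$, the desired identity $\theta_{\mu_1}\star \theta_{\mu_2}=\theta_{\mu_1+\mu_2}$ on $\Lambda'_+$ (and its formal extension to $X_\ast$ via the decomposition $\mu=\mu^+-\mu^-$) is equivalent to showing that $\mu\mapsto c_\mu$ is a monoid character on $\Lambda'_+$ that is identically $1$.

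Since $\Lambda'_+$ is generated by $a=(1,0)$ and the central cocharacters $z^{\pm 1}=(\pm 1,\pm 1)$, it suffices to verify $c_a=c_z=1$. The equality $c_a=1$ is precisely the normalization (\ref{eqn-a}). For $c_z$, use $z=t^2$ with $\ell(t)=\ell(z)=0$, giving $\vartheta'_z=\vartheta'_t\star \vartheta'_t$ in $\mathcal{H}'$; via $\Psi$ and Lemma~\ref{lemma-s2} (together with $\omega_\pi(-1)^2=1$) this reduces $c_z=1$ to the identity
\[
\vartheta_t\star \vartheta_t=\vartheta_z \quad \text{in }\mathcal{H}.
\]

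This last identity is the principal obstacle, and its proof follows the template of Lemmas~\ref{lemma-s2} and \ref{lemma-alpha}: parametrize $J'tJ'/J'\cong \Sigma_t$ via the Iwahori factorization of $J'$, use the Bruhat decomposition applied to $t^{-1}j^{-1}z=(t^{-1}j^{-1}t)\cdot t$ to show that only the trivial coset $j\in \Sigma_t$ contributes to the convolution at $z$, and match the volume factor $v_t^2=v_z$ together with $\dot{\lambda}_L(t)^2=\dot{\lambda}_L(t^2)=\dot{\lambda}_L(z)$ (valid since $\dot{\lambda}_L$ is an honest representation of $\widetilde{\sf W}\ltimes J_L$). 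The delicate point is tracking the effect of conjugation by $t$, which swaps the two copies of $GL_n(F)$ in $L$ and shifts by $\varpi_{\mathfrak k}$; once this is handled, $c_z=1$ follows, $c_\mu\equiv 1$ on all of $\Lambda'_+$, and the first identity of the lemma is established.
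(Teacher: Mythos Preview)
Your treatment of the second identity is correct and is exactly the paper's argument.

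For the first identity, however, your approach via $\Psi$ contains a circularity. You correctly observe that applying $\Psi$ to $\vartheta'_{\mu_1}\star\vartheta'_{\mu_2}=\vartheta'_{\mu_1+\mu_2}$ gives $c_{\mu_1}c_{\mu_2}(\vartheta_{\mu_1}\star\vartheta_{\mu_2})=c_{\mu_1+\mu_2}\vartheta_{\mu_1+\mu_2}$, and that $c_\mu\equiv 1$ would yield the desired relation. But you then propose to verify $c_\mu\equiv 1$ by checking on the generators $a$ and $z$ of $\Lambda'_+$; this reduction is only valid once you know $\mu\mapsto c_\mu$ is multiplicative, and the displayed equation gives multiplicativity only \emph{after} you know $\vartheta_{\mu_1}\star\vartheta_{\mu_2}=\vartheta_{\mu_1+\mu_2}$ in $\mathcal{H}$---precisely what you are trying to prove. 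Your proposed direct computation $\vartheta_t\star\vartheta_t=\vartheta_z$ (correct in spirit) would establish $c_z=1$, but not $c_{\mu}=1$ for general $\mu$.

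The paper avoids this detour by arguing directly in $\mathcal{H}$, without reference to $\Psi$. For dominant $\mu_1,\mu_2$, the Iwahori decomposition $J'=(J'\cap\overline{N})\cdot J_L\cdot(J'\cap N)$ together with \eqref{J-prime} and the fact that $\varpi_E^{\mu}$ normalizes $J_L$ yields $J'\varpi_E^{\mu_1}J'\varpi_E^{\mu_2}J'=J'\varpi_E^{\mu_1+\mu_2}J'$: conjugation by a dominant element contracts $J'\cap\overline{N}$ and its inverse contracts $J'\cap N$, so the middle $J'$ is absorbed on both sides. From this single-double-coset relation the identity $\vartheta_{\mu_1}\star\vartheta_{\mu_2}=\vartheta_{\mu_1+\mu_2}$ follows for all dominant $\mu_1,\mu_2$, and the extension to $X_\ast$ is formal. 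In other words, the very calculation you propose for the single element $t$ works uniformly for all dominant cocharacters, making the passage through $\Psi$ and the constants $c_\mu$ unnecessary. Indeed, in the paper the determination of the $c_\mu$ (Lemma~\ref{lemma-trivial}) comes \emph{after} and \emph{relies on} the present lemma, so your ordering would invert the logical dependence.
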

 \begin{proof}
 The first relation follows since $J'\varpi_E^{\mu}J'\varpi_E^{\mu'}J'=J'\varpi_E^{\mu+\mu'}J'$, which in turn follows from the Iwahori decomposition $J'=J'\cap \overline{N}\cdot J_L\cdot J'\cap N$ and (\ref{J-prime}). The second assertion follows from Lemma~\ref{lemma-s1} and applying $\Psi$ to the quadratic relation (\ref{quad-prime}).    \end{proof}

Let $\mathcal{H}_{\rm ab}$ be the abelian subalgebra of $\mathcal{H}$ generated by $\vartheta_{\mu}$ for $\mu \in \Lambda_+^{\prime}$ and their inverses. For $\mu\in X_{\ast}'$, write $\Psi(\vartheta'_{\mu})=\xi_{\mu}\vartheta_{\mu}$ for $\xi_{\mu}\in {\mathbb C}^{\times}$. Lemma~\ref{lemma-abelian} implies that $\mu\mapsto \xi_{\mu}$ defines a character of $X_{\ast}$, or equivalently, an unramified character of $A'$. We denote this character as $\xi$. 
\begin{lemma}\label{lemma-trivial}
$\xi$ is the trivial character of $A'$.
\end{lemma}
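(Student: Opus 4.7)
The plan is to exploit that $\xi$ is a character of the rank-two free abelian group $X_{\ast}\cong \mathbb{Z}^{2}$, so it suffices to verify $\xi_{\mu} = 1$ on any two elements generating $X_{\ast}$. I will take the cocharacters $a$ (corresponding to $(1,0)$) and $\alpha^{\vee}$ (corresponding to $(1,-1)$). These do generate $X_{\ast}$ since $(0,1) = (1,0) - (1,-1)$ shows the standard basis of $\mathbb{Z}^{2}$ lies in their $\mathbb{Z}$-span.

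The first value is essentially a book-keeping item: the normalization (\ref{eqn-a}) chosen to pin down the isomorphism $\Psi$ is precisely $\Psi(\vartheta'_{a}) = \vartheta_{a}$, that is, $\xi_{a} = 1$. For the second, I observe that in $\widetilde{\sf W}$ one has $s_{1}s_{0} = \alpha^{\vee}(\varpi_{\k})$ with reduced length $\ell(s_{1}s_{0}) = \ell(s_{1})+\ell(s_{0}) = 2$. Hence the braid relation (\ref{length}) in $\mathcal{H}'$ yields $\vartheta'_{\alpha^{\vee}} = \vartheta'_{s_{1}}\star\vartheta'_{s_{0}}$. Applying the algebra homomorphism $\Psi$ and then invoking Lemmas~\ref{lemma-s1} and~\ref{lemma-s2} on each factor gives
\[
\Psi(\vartheta'_{\alpha^{\vee}}) \;=\; \Psi(\vartheta'_{s_{1}})\star\Psi(\vartheta'_{s_{0}}) \;=\; \omega_{\pi}(-1)^{2}\,\vartheta_{s_{1}}\star\vartheta_{s_{0}} \;=\; \vartheta_{s_{1}}\star\vartheta_{s_{0}},
\]
and by Lemma~\ref{lemma-alpha} the right-hand side equals $\vartheta_{\alpha^{\vee}}$. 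Thus $\xi_{\alpha^{\vee}} = 1$.

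Combining the two computations, $\xi$ is trivial on the generating set $\{a, \alpha^{\vee}\}$ of $X_{\ast} \cong A'/A'_{0}$, and since $\xi$ is multiplicative (as noted just before the statement of the lemma, via Lemma~\ref{lemma-abelian}), it is trivial on all of $A'/A'_{0}$. There is essentially no obstacle here: the only potential pitfall was the sign $\omega_{\pi}(-1)$ picked up by the simple reflections in Lemmas~\ref{lemma-s1} and~\ref{lemma-s2}, but it enters squared when passing to the translation $\alpha^{\vee} = s_{1}s_{0}$ and so cancels automatically, leaving $\xi$ trivial.
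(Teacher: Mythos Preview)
Your proof is correct and follows essentially the same approach as the paper: both verify $\xi_{a}=1$ from the normalization \eqref{eqn-a} and $\xi_{\alpha^{\vee}}=1$ via the factorization $\vartheta'_{\alpha^{\vee}}=\vartheta'_{s_{1}}\star\vartheta'_{s_{0}}$ together with Lemmas~\ref{lemma-s1}, \ref{lemma-s2}, and \ref{lemma-alpha}, then conclude since $a$ and $\alpha^{\vee}$ generate $X_{\ast}$. The paper makes the last step explicit by passing through $b=\begin{psmallmatrix}1&\\&\varpi_{\k}\end{psmallmatrix}$, but this is only a cosmetic difference.
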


\begin{proof}
Recall we have elements $a,s_0,s_1\in \widetilde{\sf W}$ and note that $a$ corresponds to the dominant weight $(1,0)$. Since $\Psi(\vartheta_a')=\vartheta_a$ by definition, it follows that (i) $\Psi(\vartheta'_{a^m})=\vartheta_{a^m}$ for all $m\in \mathbb Z$. On the other hand,
\[
\Psi(\vartheta'_{\alpha^{\vee}})=\Psi(\vartheta'_{s_1}\star\vartheta'_{s_{0}})=\vartheta_{s_{1}}\star\vartheta_{s_0}=\vartheta_{\alpha^{\vee}}.
\]
Here, the second equality uses Lemma~\ref{lemma-s1} and Lemma~\ref{lemma-s2} and the third equality follows from Lemma~\ref{lemma-alpha}. Note $\alpha^{\vee}$ corresponds to the weight $(1,-1)$, put $\mu=(1,0)-(1,-1)$. Then $\vartheta'_{\mu}=\vartheta'_a\star \vartheta'^{-1}_{\alpha^{\vee}}$ and $\vartheta_{\mu}=\vartheta_a\star\vartheta^{-1}_{\alpha^{\vee}}$. It follows that $\Psi(\vartheta'_{\mu})=\vartheta_{\mu}$.  This in turn implies that (ii) $
\Psi(\vartheta'_{b^n})=\vartheta_{b^n}$, $n\in \Z$, where $b=\begin{psmallmatrix}1&\\&\varpi_{\k}\end{psmallmatrix}$. Now, (i) and (ii) combined implies that $\Psi(\vartheta'_{\mu})=\vartheta_{\mu}$ for any $\mu\in X_{\ast}$ and hence the assertion. \end{proof}

Let ${\mathcal H}_{K'}$ be the subalgebra of functions in ${\mathcal H}$ supported on $J'K'J'=J'{\sf W}J'$. Then $\Psi$ restricts to a support preserving isomorphism between subalgebras:
\[
{\mathcal H}'_{K'}\cong {\mathcal H}_{K'}\mbox{ and }{\mathcal H}_{\rm ab}'\cong {\mathcal H}_{\rm ab}. 
\]
By Proposition~\ref{prop-basis}, we may write ${\mathcal H}$ as a ``twisted" tensor product:
\[
  \mathcal{H} \cong  \mathcal{H}_{K'} \tilde{\otimes}_{\mathbb{C}} \mathcal{H}_{\rm ab}.
\]

 Next, for an unramified character $\chi\in X(L)$ of $L$, we turn our attention to the space $\mathcal{F}_P(\sigma \otimes \chi)_{\lambda'}$ of $\lambda'$-invariants which is a ${\mathcal H}$-module (cf. \S\ref{gen}). Note $W=W_1\otimes W_1$ is the space of $\lambda_L$ as well as $\lambda'$. There is a natural isomorphism between  
 \[
 \mathcal{F}_P(\sigma \otimes \chi)_{\lambda'}\otimes_{\mathbb C}W\longrightarrow \mathcal{F}_P(\sigma \otimes \chi)^{\lambda'}
 \]
 given by $\phi\otimes w\mapsto \phi(w)$. By \cite[Lemma 3.2.4]{Kim}, any $f\in \mathcal{F}_P(\sigma \otimes \chi)^{\lambda'}$ is determined by its restriction to $K'$. For $w\in W$, we have the function $\varphi_w\in \text{c-Ind }\tilde{\lambda}_L$ and the corresponding function $f_w=f_{\varphi_w}\in {\mathcal F}_P(\sigma\otimes \chi)^{\lambda'}$ (supported on $PJ'$) which of course depends on $\chi$ as in \S\ref{sec:unequal}. We set 
 \[
 f_{1,w,\chi}={\rm vol}(J')^{-1}f_{\varphi_w}.
 \]
 Using the extended representation $\dot{\lambda}_L$, we may also define a similar function $f=f_{{\sf w}_{0},w,\chi}\in {\mathcal F}_P(\sigma\otimes \chi)^{\lambda'}$ supported on $P{\sf w}_{0}J'$ as follows:
\[
f(p{\sf w}_0j)={\rm vol}(J')^{-1}\chi(p)\delta_{P}^{1/2}(p)\sigma(p)\varphi_{\dot{\lambda}_L({\sf w}_{0})\lambda'(j)w}.
\]
This is well defined since the map $w\mapsto \varphi_w$ is $\tilde{J}_L$-intertwining. For ${\sf w}\in {\sf W}$, the resulting map 
\[
  \phi_{{\sf w},{\chi}} : W \longrightarrow \mathcal{F}_P(\sigma \otimes \chi) \quad \text{given by }w\mapsto f_{{\sf w}, w,\chi} 
\]  
is clearly a $J'$-embedding, i.e., it belongs to ${\mathcal F}(\sigma\otimes\chi)_{\lambda'}$. As explained in \cite[Lemma 3.2.9]{Kim}, the set $\{\phi_{1,{\chi}} ,  \phi_{{\sf w}_0,\chi} \}$ forms a basis of $\mathcal{F}_P(\sigma \otimes \chi)_{\lambda^{\prime}}$. 

If $\phi\in {\mathcal F}(\sigma\otimes\chi)_{\lambda'}$, $w\in W$, and $x\in J'{\sf W}J'$, then $\phi(w)(x)$ belongs to the $\lambda_L$-isotypic component (c-${\rm Ind}\,\tilde{\lambda}_L)^{\lambda_L}\cong W$ and consequently $w\mapsto \phi(w)(x)$ defines an element in $\mathrm{End}_{\mathbb C}(W)$. By \cite[Lemma 3.2.6]{Kim} the map 
\[
\iota_2: \mathcal{F}_P(\sigma \otimes \chi)_{\lambda^{\prime}} \rightarrow\mathcal{H}_{K'}(G,{\check{\lambda}}^{\prime})
\]
given by $\iota_2(\phi)(x)=\phi(w)(x), x\in J'{\sf W}J'$, is $\mathcal{H}_{K'}(G,{\check{\lambda}}^{\prime})$-equivariant. Then composing $\iota_2$ with the anti-isomorphism $h\mapsto \check{h}$ from $\mathcal{H}_{K'}(G,{\check{\lambda}}^{\prime})\rightarrow \mathcal{H}_{K'}$, we obtain the $\mathcal{H}_{K'}$-equivariant map $\iota: \mathcal{F}_P(\sigma \otimes \chi)_{\lambda^{\prime}} \rightarrow\mathcal{H}_{K'}$.

We have the following analogue of Proposition~\ref{GL2-structure} for the group $G$:

\begin{proposition}\cite[Proposition 3.2.10.]{Kim} Keeping the above notation, the following holds:
\label{GLn-structure}
\begin{enumerate}[label=$(\mathrm{\alph*})$]
\item\label{GLn-structure-1}$\phi_{{\sf w}_0,\chi}$ is an eigenvector for $\mathcal{H}_{\rm ab}$. More precisely, for $\mu \in \Lambda_+^{\prime}$, we have
\[
  \iota_P^G(\sigma \otimes \chi) \left( \theta_{\mu}     \right)\phi_{{\sf w}_0,\chi}={{\sf w}_0}(\chi)(\mu(\varpi_{\mathfrak{k}}))\phi_{{\sf w}_{0},\chi}.
\]
\item\label{GLn-structure-2} The map $\iota$ (defined above) is an $\mathcal{H}_{K'}$-module isomorphism. 
\item\label{GLn-structure-3} As $\mathcal{H}$-modules, the map 
\[
  \mathcal{H} \otimes_{\mathcal{H}_{\rm ab}} \mathbb{C}_{{\sf w}_0(\chi)} \simeq \mathcal{F}_P(\sigma \otimes \chi)_{\lambda^{\prime}}
\]
given by $\vartheta\otimes 1\mapsto \vartheta\cdot \phi_{{\sf w}_{0},\chi}$ is an isomorphism. 
\end{enumerate}
\end{proposition}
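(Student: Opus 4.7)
The plan is to mirror the proof of Proposition~\ref{GL2-structure} inside $\mathcal{H}$, transporting everything through the isomorphism $\Psi$. The key auxiliary fact is Lemma~\ref{lemma-trivial}, which ensures $\Psi$ restricts to the support-preserving identification $\theta'_\mu\leftrightarrow\theta_\mu$ on the abelian subalgebras; combined with the twisted tensor decomposition $\mathcal{H}\cong\mathcal{H}_{K'}\,\tilde{\otimes}\,\mathcal{H}_{\mathrm{ab}}$ this will let part (c) follow from (a) and (b) quickly.

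For (a), I would compute $\iota_P^G(\sigma\otimes\chi)(\vartheta_\mu)\phi_{{\sf w}_0,\chi}$ directly. Since $\mu\in\Lambda'_+$ is dominant, one has $\mu^{-1}(J'\cap N)\mu\supseteq J'\cap N$ and $\mu(J'\cap\overline{N})\mu^{-1}\supseteq J'\cap\overline{N}$, so the Iwahori factorization of $J'$ yields the coset decomposition $J'\mu J'=\bigsqcup_x J'\mu x$ with $x$ ranging over representatives for $(J'\cap N)/\mu(J'\cap N)\mu^{-1}$. Each $\mu x$ lies in $P$, and since $\phi_{{\sf w}_0,\chi}$ is supported in $P{\sf w}_0 J'$ and transforms by $\chi\delta_P^{1/2}$ on $P$, every summand contributes $\delta_P^{1/2}(\mu)\chi({\sf w}_0^{-1}\mu{\sf w}_0)$ times $\phi_{{\sf w}_0,\chi}$. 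Absorbing the coset count $[J'\mu J':J']=q_{\mathfrak a}^{\ell(\mu)}$ into the volume factor $v_\mu$ inside the definition of $\vartheta_\mu$, and then passing from $\vartheta_\mu$ to $\theta_\mu=q_{\mathfrak a}^{-\ell(\mu)/2}\vartheta_\mu$, the residual $q_{\mathfrak a}$-powers combine with $\delta_P^{1/2}(\mu)$ and collapse to the intrinsic eigenvalue ${\sf w}_0(\chi)(\mu(\varpi_{\mathfrak k}))$. Because $\mu$ belongs to the translation lattice of $\widetilde{\sf W}$, $\dot\lambda_L(\mu)$ coincides with $\lambda_L(\mu)$ and none of the $\omega_\pi(-1)$ signs of Lemmas~\ref{lemma-s1}--\ref{lemma-s2} appear.

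Part (b) is then a bookkeeping check: $\iota$ is $\mathcal{H}_{K'}$-equivariant by construction (using \cite[Lemma 3.2.6]{Kim} composed with the canonical anti-isomorphism), both source and target are two-dimensional over $\mathbb{C}$, and the support conditions $\mathrm{Supp}(\phi_{1,\chi})\subseteq PJ'$, $\mathrm{Supp}(\phi_{{\sf w}_0,\chi})\subseteq P{\sf w}_0 J'$ force $\iota(\phi_{1,\chi})$ and $\iota(\phi_{{\sf w}_0,\chi})$ to be nonzero scalar multiples of $\vartheta_1$ and $\vartheta_{{\sf w}_0}$, respectively. For (c), the $\mathcal{H}$-linear map $\vartheta\otimes 1\mapsto\vartheta\cdot\phi_{{\sf w}_0,\chi}$ is well defined thanks to (a); it carries the basis $\{\vartheta_1\otimes 1,\vartheta_{{\sf w}_0}\otimes 1\}$ of $\mathcal{H}\otimes_{\mathcal{H}_{\mathrm{ab}}}\mathbb{C}_{{\sf w}_0(\chi)}$ (coming from the twisted tensor decomposition) to $\{\phi_{{\sf w}_0,\chi},\vartheta_{{\sf w}_0}\cdot\phi_{{\sf w}_0,\chi}\}$, and (b) together with the quadratic relation $\vartheta_{{\sf w}_0}\star\vartheta_{{\sf w}_0}=q_{\mathfrak a}\vartheta_1+\omega_\pi(-1)(q_{\mathfrak a}-1)\vartheta_{{\sf w}_0}$ of Lemma~\ref{lemma-abelian} shows the two image vectors are linearly independent, so the map is an isomorphism.

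The main obstacle is (a): the delicate interplay between the Bernstein normalization $q_{\mathfrak a}^{-\ell(\mu)/2}$, the modular factor $\delta_P^{1/2}(\mu)$, the volume $v_\mu$ inside the definition of $\vartheta_\mu$, and the Weyl conjugate of $\chi$. This is exactly the locus where sign ambiguities surface in Kim's exposition (cf.\ \cite[Remark 4.2.6]{Kim}); they are avoided here because the isomorphism $\Psi$ has been pinned down by the normalization $\Psi(\vartheta'_a)=\vartheta_a$ of \eqref{eqn-a} and because Lemma~\ref{lemma-trivial} trivializes the induced character on $A'/A'_0$.
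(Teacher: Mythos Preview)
The paper does not give its own proof of this proposition; it is simply stated with attribution to \cite[Proposition~3.2.10]{Kim}. Your strategy of transporting the proof of Proposition~\ref{GL2-structure} through $\Psi$ is exactly the intended one, and the outline is sound.

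One point in part~(a) deserves more care. The identity $[J'\mu J':J']=q_{\mathfrak a}^{\ell(\mu)}$ that you invoke is the statement for Iwahori double cosets in $G'$; for $J'$-double cosets in $G$ one has instead ${\rm vol}_{J'}(J'\mu J')=q_{\mathfrak a}^{\ell(\mu)}v_\mu^{-2}$ by the very definition of $v_\mu$. It is precisely this discrepancy, together with the factor $v_\mu$ already built into $\vartheta_\mu(\mu)={\rm vol}(J')^{-1}v_\mu\dot\lambda_L(\mu)$, that makes the normalization come out correctly. So rather than saying the powers of $q_{\mathfrak a}$ ``collapse'', you should display the cancellation: the coset sum produces ${\rm vol}_{J'}(J'\mu J')\cdot v_\mu\cdot\delta_P^{1/2}(\mu)^{-1}$ (the last factor arising because you are translating on the $P$-side through ${\sf w}_0$), and one checks this equals $q_{\mathfrak a}^{\ell(\mu)/2}$, so that after dividing by $q_{\mathfrak a}^{\ell(\mu)/2}$ to pass to $\theta_\mu$ only the eigenvalue ${\sf w}_0(\chi)(\mu(\varpi_{\mathfrak k}))$ survives. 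Your remark that no $\omega_\pi(-1)$ appears here because $\dot\lambda_L$ restricted to the translation lattice is just $\tilde\lambda_L$ is correct and worth keeping.

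Parts~(b) and~(c) are fine as you have them: (b) is the content of \cite[Lemma~3.2.6]{Kim} plus a dimension count, and your argument for~(c) reproduces verbatim the argument the paper gives for Proposition~\ref{GL2-structure}\ref{GL2-structure-3}, with the quadratic relation of Lemma~\ref{lemma-abelian} replacing~\eqref{quad-prime}.
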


Observe that $\chi|_{A'}$ is an unramified character of $A'$, we simply write $\chi$ to denote this character. Thus we can also form the induced representation ${\mathcal F}_{B'}(\chi)$. Let 
\[
\Phi_{\chi} : \mathcal{F}_{B'}(\chi)^{\mathcal{I}^{\prime}} \rightarrow \mathcal{F}_P(\sigma \otimes \chi)_{\lambda'}
\]
denote the linear isomorphism extending $\Phi_{\chi}(\phi'_{1,\chi})=\phi_{1,\chi}$ and $\Phi_{\chi}(\phi'_{{\sf w}_0,\chi})=\omega_{\pi}(-1)(v_{{\sf w}_0}\phi_{{\sf w}_0,\chi})$. The following lemma is a reformulation of \cite[Proposition 3.2.11]{Kim}. 

\begin{lemma}
\label{induced-isom}
The diagram below commutes with $\Phi=\Phi_{\chi}$.
\[
\xymatrix@C+=3cc{
\mathcal{H} \times \mathcal{F}_P(\sigma \otimes \chi)_{\lambda^{\prime}}    \ar[r]
&  \mathcal{F}_P(\sigma \otimes \chi)_{\lambda^{\prime}}  \\
   \mathcal{H}^{\prime}  \times {\mathcal F}_{B'}(\chi)^{\mathcal{I}^{\prime}}   \ar[u]^{\Psi \times \Phi_{\chi}} \ar[r] &{\mathcal F}_{B'}(\chi)^{\mathcal{I}^{\prime}} \ar[u]_{\Phi_{\chi}},\\
      }
\]
where the horizontal arrows are given by the module action of $\mathcal{H}$ and $\mathcal{H}^{\prime}$ on $\mathcal{F}_P(\sigma \otimes \chi)_{\lambda^{\prime}} $ and ${\mathcal F}_{B'}(\chi)^{\mathcal{I}^{\prime}}$, respectively. Put differently, $\Phi_{\ast}({\mathcal F}_{P}(\sigma\otimes \chi)_{\lambda'})\cong  {\mathcal F}_{B'}(\chi)^{\mathcal{I}^{\prime}}$ as ${\mathcal H}'$-modules.

\end{lemma}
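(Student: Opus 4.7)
The plan is to exploit the cyclic structure of both $\mathcal{H}$-modules. By Proposition~\ref{GL2-structure}\ref{GL2-structure-3}, the space $\mathcal{F}_{B'}(\chi)^{\mathcal{I}^{\prime}}$ is generated over $\mathcal{H}'$ by $\phi'_{{\sf w}_0,\chi}$; by Proposition~\ref{prop-basis}, $\mathcal{H}'$ is in turn generated by $\mathcal{H}'_{\rm ab}$ and $\vartheta'_{{\sf w}_0}$. Consequently, to verify the commutativity of the diagram it suffices to check, for $\vartheta'$ ranging over the generating set $\{\theta'_{\mu}\mid \mu\in X_{\ast}\}\cup\{\vartheta'_{{\sf w}_0}\}$, the equality
\[
  \Phi_{\chi}(\vartheta'\cdot \phi'_{{\sf w}_0,\chi})=\Psi(\vartheta')\cdot \Phi_{\chi}(\phi'_{{\sf w}_0,\chi}).
\]

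The abelian case is immediate. For $\vartheta'=\theta'_{\mu}$, Proposition~\ref{GL2-structure}\ref{GL2-structure-1} gives $\theta'_{\mu}\cdot \phi'_{{\sf w}_0,\chi}={\sf w}_0(\chi)(\mu(\varpi_{\k}))\,\phi'_{{\sf w}_0,\chi}$; meanwhile Lemma~\ref{lemma-trivial} yields $\Psi(\theta'_{\mu})=\theta_{\mu}$, and Proposition~\ref{GLn-structure}\ref{GLn-structure-1} then matches the eigenvalue on $\phi_{{\sf w}_0,\chi}$, so both sides equal ${\sf w}_0(\chi)(\mu(\varpi_{\k}))\,\omega_{\pi}(-1)\,v_{{\sf w}_0}\,\phi_{{\sf w}_0,\chi}$. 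For the case $\vartheta'=\vartheta'_{{\sf w}_0}$, the computation at the end of the proof of Proposition~\ref{GL2-structure}\ref{GL2-structure-3} gives $\vartheta'_{{\sf w}_0}\cdot \phi'_{{\sf w}_0,\chi}=q_{\a}\,\phi'_{1,\chi}+(q_{\a}-1)\,\phi'_{{\sf w}_0,\chi}$. Applying $\Phi_{\chi}$ and invoking Lemma~\ref{lemma-s1} to substitute $\Psi(\vartheta'_{{\sf w}_0})=\omega_{\pi}(-1)\vartheta_{{\sf w}_0}$, the desired compatibility reduces to the explicit module identity
\[
  \vartheta_{{\sf w}_0}\cdot \phi_{{\sf w}_0,\chi}\;=\;\frac{q_{\a}}{v_{{\sf w}_0}}\,\phi_{1,\chi}\;+\;\omega_{\pi}(-1)(q_{\a}-1)\,\phi_{{\sf w}_0,\chi}
\]
inside $\mathcal{F}_P(\sigma\otimes \chi)_{\lambda'}$.

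The heart of the proof, and the main obstacle, is this last identity. My plan is to evaluate the convolution $(\vartheta_{{\sf w}_0}\cdot \phi_{{\sf w}_0,\chi})(g)$ for $g$ in each of the two double cosets $J'$ and $J'{\sf w}_0 J'$ that together cover $J'K'J'$. The Iwahori factorization of $J'$ and the description (\ref{J-prime}) parametrize $J'{\sf w}_0 J'/J'$ by cosets $n(x)$, $x\in \varpi_{\k}^{-1}\mathfrak{H}^1(\beta,\a)/\varpi_{\k}\mathfrak{J}(\beta,\a)$, and the Bruhat identity
\[
  {\sf w}_0\,n(-y)\,{\sf w}_0 \;=\;\begin{pmatrix} y^{-1} & \\ & -y \end{pmatrix}{\sf w}_0\,n(-y^{-1})\begin{pmatrix} I_n & \\ y^{-1} & I_n \end{pmatrix},
\]
used exactly as in the proof of Lemma~\ref{lemma-s1}, splits the sum into a contribution from $y\in \mathfrak{J}(\beta,\a)$, whose representatives contribute to the identity double coset and give the $\phi_{1,\chi}$ term, and the complementary cosets, which combine to give the $\phi_{{\sf w}_0,\chi}$ term. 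The sign $\omega_{\pi}(-1)$ emerges from passing the Bruhat factor $\mathrm{diag}(y^{-1},-y)$ through the central character of $\sigma$, in perfect parallel with the derivation of the constant $c_{s_1}=\omega_{\pi}(-1)$ in Lemma~\ref{lemma-s1}. The coefficient $q_{\a}/v_{{\sf w}_0}$ of $\phi_{1,\chi}$ traces back to the normalization $\vartheta_{{\sf w}_0}({\sf w}_0)=\mathrm{vol}(J')^{-1}v_{{\sf w}_0}\dot{\lambda}_L({\sf w}_0)$ together with the coset count $[\mathfrak{H}^1(\beta,\a):\varpi_{\k}\mathfrak{J}(\beta,\a)]$, mirroring the quadratic relation of Lemma~\ref{lemma-abelian}. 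Careful bookkeeping of these volume and $\dot{\lambda}_L$-factors — the chief technical difficulty — produces the exact coefficients above and completes the proof.
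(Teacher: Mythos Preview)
Your reduction is exactly the paper's: reduce to the cyclic vector $\phi'_{{\sf w}_0,\chi}$ via Proposition~\ref{GL2-structure}\ref{GL2-structure-3}, then to the generators $\theta'_{\mu}$ and $\vartheta'_{{\sf w}_0}$ via Proposition~\ref{prop-basis}; the abelian case is dispatched identically using Lemma~\ref{lemma-trivial} and the eigenvector property. The remaining identity
\[
  \vartheta_{{\sf w}_0}\cdot \phi_{{\sf w}_0,\chi}=q_{\a}v_{{\sf w}_0}^{-1}\,\phi_{1,\chi}+\omega_{\pi}(-1)(q_{\a}-1)\,\phi_{{\sf w}_0,\chi}
\]
is also precisely what the paper isolates.

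Where you diverge is in how you verify this identity. You propose to unwind the module action directly, parametrizing $J'{\sf w}_0J'/J'$ and invoking the Bruhat decomposition as in the proof of Lemma~\ref{lemma-s1}. This works, but it effectively redoes that lemma's computation inside the induced representation, with the added burden of tracking the functions $f_{{\sf w},w,\chi}$ rather than scalars. The paper bypasses all of this: it applies the $\mathcal{H}_{K'}$-module isomorphism $\iota$ of Proposition~\ref{GLn-structure}\ref{GLn-structure-2}, noting that $\iota(\phi_{1,\chi})=\vartheta_1$ and $\iota(\phi_{{\sf w}_0,\chi})=v_{{\sf w}_0}^{-1}\vartheta_{{\sf w}_0}$, so the sought identity becomes $\vartheta_{{\sf w}_0}\star\vartheta_{{\sf w}_0}=c_1 v_{{\sf w}_0}\vartheta_1+c_2\vartheta_{{\sf w}_0}$ in $\mathcal{H}_{K'}$, and the quadratic relation of Lemma~\ref{lemma-abelian} reads off $c_1,c_2$ immediately. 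In short, your approach re-proves the quadratic relation in a harder setting; the paper transports the question back to the Hecke algebra where that relation is already in hand.
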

\begin{proof}
We need to check
\begin{equation}
\label{basis}
\Phi_{\chi}(\vartheta'\cdot \phi')=\Psi(\vartheta')\cdot \Phi_{\chi}(\phi'), \vartheta'\in {\mathcal H}', \phi'\in {\mathcal F}(\chi)^{{\mathcal I}'}.
\end{equation}
Since by part \ref{GL2-structure-3} of Proposition~\ref{GL2-structure}, any $\phi'\in {\mathcal F}_{B'}(\chi)^{{\mathcal I}'}$ can be written as $\phi'=\vartheta'_{0}\cdot \phi'_{{\sf w}_{0},\chi}$ for some $\vartheta'_{0}\in {\mathcal H}'$, it is sufficient to verify \eqref{basis} for $\phi'=\phi'_{{\sf w}_{0},\chi}$. Further,  by Proposition~\ref{prop-basis}, we may reduce the verification to $\vartheta'= \vartheta'_{{\sf w}_{0}}\mbox{ or } \theta'_{\mu}\in {\mathcal A}'$. 

Let us show $\Phi_{\chi}(\vartheta'_{{\sf w}_{0}}\cdot \phi'_{{\sf w}_{0},\chi})=\Psi(\vartheta'_{{\sf w}_{0}})\cdot \Phi_{\chi}(\phi'_{{\sf w}_{0},\chi})$. Since $\vartheta'_{{\sf w}_{0}}\cdot \phi'_{{\sf w}_0,\chi}=q_{\a}\phi'_{1,\chi}+(q_{\a}-1)\phi'_{{\sf w}_0,\chi}$, the left hand side is given by 
\[
\Phi_{\chi}(\vartheta'_{{\sf w}_{0}}\cdot \phi'_{{\sf w}_{0},\chi})=q_{\a}\phi_{1,\chi}+\omega_{\pi}(-1)(q_{\a}-1)v_{{\sf w}_0}\phi_{{\sf w}_{0},\chi}.
\]
On the other hand, by Lemma~\ref{lemma-s1}, the right hand side equals $v_{{\sf w}_{0}}(\vartheta_{{\sf w}_{0}}\cdot \phi_{{\sf w}_{0},\chi})$. Say,
\[
\vartheta_{{\sf w}_{0}}\cdot \phi_{{\sf w}_{0},\chi}=c_1\phi_{1,\chi}+c_2\phi_{{\sf w}_{0},\chi}.
\]
Apply $\iota$ to this equation to get $\vartheta_{{\sf w}_0}\star \iota(\phi_{{\sf w}_{0},\chi})=c_1\iota(\phi_{1,\chi})+c_2\iota(\phi_{{\sf w}_{0},\chi})$. But $ \iota(\phi_{{\sf w}_0,\chi})=v^{-1}_{{\sf w}_{0}}\vartheta_{{\sf w}_0}$ and $\iota(\phi_{1,\chi})=\vartheta_{1}$. Using the quadratic relation satisfied by $\vartheta_{{\sf w}_0}$ we obtain $c_1=q_{\a}v^{-1}_{{\sf w}_{0}}, c_2=(q_{\a}-1)\omega_{\pi}(-1)$, giving us the desired equality. 

That $\Phi_{\chi}(\theta'_{\mu}\cdot \phi'_{{\sf w}_{0},\chi})=\Psi(\theta'_{\mu})\cdot \Phi_{\chi}(\phi'_{{\sf w}_{0},\chi})$ follows from Lemma~\ref{lemma-trivial} and the fact that $\phi'_{{\sf w}_0,\chi}$ and $\phi_{{\sf w}_0,\chi}$ are eigenfunctions for ${\mathcal H}_{\rm ab}'$ and ${\mathcal H}_{\rm ab}$, respectively, with the same eigencharacter ${\sf w}_{0}(\chi)$. 
\end{proof}

Recall the spherical function $\phi'_{K',\chi}\in {\mathcal F}_{B'}(\chi)^{{\mathcal I}'}$ from Lemma~\ref{GL(2)-local-coefficient}, the line generated by this vector is invariant for the action of ${\mathcal H}'_{K'}$. Let us put $\phi_{K',\chi}=\phi_{1,\chi}+\omega_{\pi}(-1)v_{{\sf w}_{0}}\phi_{{\sf w}_{0},\chi}$, then $\Phi(\phi'_{K',\chi})=\phi_{K',\chi}$ and it follows from the above lemma that the line generated by $\phi_{K'}$ is invariant for the action of ${\mathcal H}_{K'}$. Since 
\[
\mathrm{Hom}_G(\mathcal{F}_P(\sigma \otimes \chi),\mathcal{F}_P(\sigma \otimes {\sf w}_{0}(\chi))) \cong \mathrm{Hom}_{\mathcal{H}} (\mathcal{F}_P(\sigma \otimes \chi)_{\lambda^{\prime}},\mathcal{F}_P(\sigma \otimes {\sf w}_{0}(\chi))_{\lambda^{\prime}}),
\]
the intertwining operator $A(\chi,\sigma,{\sf w}_0)$ (see (\ref{eqn-int})) induces a natural map of ${\mathcal H}$-modules from $ \mathcal{F}_P(\sigma \otimes \chi)_{\lambda^{\prime}} \longrightarrow \mathcal{F}_P(\sigma \otimes {\sf w}_{0}(\chi))_{\lambda^{\prime}}$ given by composition. Suppose $\chi|_{A'}$ is regular so that the representations $\iota_P^G(\sigma \otimes \chi)$, $\iota_P^G(\sigma \otimes {\sf w}_{0}(\chi))$, $\iota_{B'}^{G'}(\chi)$ and $\iota_{B'}^{G'}({\sf w}_0(\chi)$ are all irreducible. In this circumstance, we have the following:

\begin{proposition}
\label{GL(n)-intertwining}
Suppose $\chi=\chi_1\otimes\chi_2\in X(L)$ is regular in the above sense. Then
\[
  A(\chi,\sigma,{\sf w}_0)\circ \phi_{K',\chi}=\mathrm{vol}( J^{\prime} \cap N) v_{{\sf w}_0}\omega_{\pi}(-1) \frac{1-q^{-1}_{\mathfrak{a}}\chi_1(\varpi_{\k})\chi_2^{-1}(\varpi_{\k})}{1-\chi_1(\varpi_{\k})\chi_2^{-1}(\varpi_{k})} \phi_{K',{\sf w}_{0}(\chi)}.
\]
\end{proposition}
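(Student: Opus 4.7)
The plan is to transport the computation through the Hecke algebra isomorphism $\Psi$ down to $G' = GL_2(\mathfrak k)$, where Casselman's formula (Lemma~\ref{intertwining-spherical}) provides the answer, and then to pin down the scalar of proportionality by a single direct calculation of $A(\chi,\sigma,{\sf w}_0)$ on $\phi_{{\sf w}_0,\chi}$ evaluated at the identity.

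\textbf{Step 1 (reduction to Casselman's formula).} Regularity of $\chi$ makes both $\iota_P^G(\sigma\otimes\chi)$ and $\iota_{B'}^{G'}(\chi)$ irreducible, so by Schur's lemma the spaces $\mathrm{Hom}_G(\iota_P^G(\sigma\otimes\chi),\iota_P^G({\sf w}_0(\sigma\otimes\chi)))$ and $\mathrm{Hom}_{G'}(\iota_{B'}^{G'}(\chi),\iota_{B'}^{G'}({\sf w}_0(\chi)))$ are each one-dimensional. Combining this with Lemma~\ref{induced-isom}, which asserts that $\Phi_\chi$ transports the $\mathcal H$-module structure on $\mathcal F_P(\sigma\otimes\chi)_{\lambda'}$ to the $\mathcal H'$-module structure on $\mathcal F_{B'}(\chi)^{\mathcal I'}$ via $\Psi$, the composition $\Phi_{{\sf w}_0(\chi)}^{-1}\circ A(\chi,\sigma,{\sf w}_0)\circ\Phi_\chi$ is an $\mathcal H'$-module intertwiner between the two irreducible modules. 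Hence there is a scalar $c(\chi)\in\mathbb C^\times$ with
\[
A(\chi,\sigma,{\sf w}_0)\circ\Phi_\chi \;=\; c(\chi)\,\Phi_{{\sf w}_0(\chi)}\circ A(\chi,{\sf w}_0).
\]
Applying both sides to $\phi'_{K',\chi}$, noting $\Phi_\chi(\phi'_{K',\chi})=\phi_{K',\chi}$ and $\Phi_{{\sf w}_0(\chi)}(\phi'_{K',{\sf w}_0(\chi)})=\phi_{K',{\sf w}_0(\chi)}$, and invoking Lemma~\ref{intertwining-spherical}, one obtains $A(\chi,\sigma,{\sf w}_0)(\phi_{K',\chi})=c(\chi)c_{{\sf w}_0}(\chi)\phi_{K',{\sf w}_0(\chi)}$, which reduces the problem to determining $c(\chi)$.

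\textbf{Step 2 (determination of $c(\chi)$).} I would test the above relation on $\phi'_{{\sf w}_0,\chi}$ and evaluate at $g=1$. On the algebraic side, using $\Phi_\chi(\phi'_{{\sf w}_0,\chi})=\omega_\pi(-1)v_{{\sf w}_0}\phi_{{\sf w}_0,\chi}$ and Lemma~\ref{intertwining-spherical}, together with the vanishing $\phi_{{\sf w}_0,{\sf w}_0(\chi)}(w)(1)=0$ and $\phi_{1,{\sf w}_0(\chi)}(w)(1)=\mathrm{vol}(J')^{-1}\varphi_w$, the prediction at $g=1$ reads
\[
A(\chi,\sigma,{\sf w}_0)(\phi_{{\sf w}_0,\chi})(w)(1) \;=\; \frac{c(\chi)}{\omega_\pi(-1)v_{{\sf w}_0}}\,\mathrm{vol}(J')^{-1}\,\varphi_w.
\]
On the analytic side, I would compute the integral $\int_N f_{{\sf w}_0,w,\chi}({\sf w}_0 n)\,dn$ directly: since $f_{{\sf w}_0,w,\chi}$ is supported on $P{\sf w}_0J'$, and the Iwahori factorization $J'=(J'\cap\overline N)J_L(J'\cap N)$ together with $\overline P\cap N=\{1\}$ forces $\{n\in N:{\sf w}_0 n\in P{\sf w}_0J'\}=J'\cap N$, and because $\lambda'$ is trivial on $J'\cap N$ (cover property (a)), the integrand is constant on the region of integration, producing
\[
\mathrm{vol}(J'\cap N)\,\mathrm{vol}(J')^{-1}\,\varphi_{\dot\lambda_L({\sf w}_0)w}.
\]

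\textbf{Step 3 (matching).} The main subtlety, and what I expect to be the principal obstacle, is reconciling the two values: the algebraic prediction involves $\varphi_w$ while the direct computation produces $\varphi_{\dot\lambda_L({\sf w}_0)w}$. These live a priori in $V_\sigma$ and $V_{{\sf w}_0(\sigma)}$ respectively, and the implicit identification ${\sf w}_0(\sigma)\cong\sigma$ used throughout this framework is realized at the level of types by precisely the operator $\dot\lambda_L({\sf w}_0)$ (the swap on $W=W_1\otimes W_1$ corresponding to the swap on $V_\sigma=V_\pi\otimes V_\pi$). This identification is already baked into the definition of $f_{{\sf w}_0,w,\chi}$, so under it $\varphi_{\dot\lambda_L({\sf w}_0)w}$ matches $\varphi_w$. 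Equating the two expressions yields $c(\chi)=\omega_\pi(-1)v_{{\sf w}_0}\mathrm{vol}(J'\cap N)$, and substituting into Step 1 delivers the proposition.
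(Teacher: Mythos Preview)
Your proposal is correct and follows essentially the same route as the paper: reduce to Casselman's formula via the module isomorphism $\Phi_\chi$ of Lemma~\ref{induced-isom}, then determine the proportionality scalar by evaluating the intertwining operator on $\phi_{{\sf w}_0,\chi}$ at $g=1$. The only difference is in your Step~3: rather than arguing through the implicit identification ${\sf w}_0(\sigma)\cong\sigma$ to match $\varphi_{\dot\lambda_L({\sf w}_0)w}$ with $\varphi_w$, the paper simply specializes to a \emph{symmetric} tensor $w=w_1\otimes w_1$, for which $\dot\lambda_L({\sf w}_0)w=w$ literally, sidestepping the identification issue entirely.
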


\begin{proof}

From Lemma \ref{induced-isom}, it is clear that $\Phi=\Phi_{\chi}$ induces the equivalence
\[
\Phi_{\ast}:
\mathrm{Hom}_{\mathcal{H}^{\prime}}({\mathcal F}_{B'}(\chi)^{\mathcal{I}^{\prime}},{\mathcal F}_{B'}({\sf w}_{0}(\chi))^{\mathcal{I}^{\prime}})
\overset{\cong}{\longrightarrow}
\mathrm{Hom}_{\mathcal{H}} (\mathcal{F}_P(\sigma \otimes \chi)_{\lambda^{\prime}},\mathcal{F}_P(\sigma \otimes {\sf w}_{0}(\chi))_{\lambda^{\prime}}),
\]
where $\Phi_{\ast}(A)= \Phi_{{\sf w}_{0}(\chi)}\circ A\circ \Phi_{\chi}^{-1}$. By irreducibility, there exists a constant $c(\chi,\sigma) \in \mathbb{C}^{\times}$ so that 
\begin{equation}\label{eqn-Aint}
A(\chi,\sigma,{\sf w}_0)=c(\chi,\sigma)\Phi_{\ast}(A(\chi,{\sf w}_{0})).
\end{equation}
Evaluating both sides of (\ref{eqn-Aint}) at $\phi_{K',\chi}$, it follows from Lemma \ref{intertwining-spherical} and Lemma \ref{induced-isom} that
\[
 A(\chi,\sigma,{\sf w}_0)\circ \phi_{K',\chi}=c(\chi,\sigma)\Phi_{\ast}(A(\chi,{\sf w}_0))\circ\phi_{K',\chi}
 =c(\chi,\sigma)c_{{\sf w}_0}(\chi)\phi_{K',{\sf w}_{0}(\chi)}.
\]
On the other hand, if we evaluate both sides of (\ref{eqn-Aint}) at $v_{{\sf w}_0}\omega_{\pi}(-1)\phi_{{\sf w}_0,\chi}$, we obtain 
\[
\begin{split}
v_{{\sf w}_0}\omega_{\pi}(-1)A(\chi,\sigma,{\sf w}_0)\circ \phi_{{\sf w}_0,\chi}
&=c(\chi,\sigma)\Phi_{\ast}(A(\chi,{\sf w}_0))\circ (v_{{\sf w}_0}\omega_{\pi}(-1)  \phi_{{\sf w}_0,\chi})\\
&=c(\chi,\sigma)(\phi_{1,\chi}+(c_{{\sf w}_0}(\chi)-q_{\a}^{-1})v_{{\sf w}_0}\omega_{\pi}(-1)\phi_{{\sf w}_0,\chi}).
\end{split}
\]
Now, apply both sides of the second equation to an arbitrary $w\in W$ and then evaluate at $1$ to get 
\begin{equation}
\label{identity-evaluation}
\begin{split}
{\rm vol}(J')^{-1} c(\chi,\sigma)\varphi_w&=v_{{\sf w}_0}\omega_{\pi}(-1)A(\chi,\sigma,{\sf w}_0)(f_{{\sf w}_0,w,\chi})(1)
 =v_{{\sf w}_0}\omega_{\pi}(-1)\int\limits_{N} f_{{\sf w}_0,w,\chi}({\sf w}_0^{-1}n) dn\\
 &=v_{{\sf w}_0}\omega_{\pi}(-1)\int\limits_{J^{\prime} \cap N} f_{{\sf w}_0,w,\chi}({\sf w}_0^{-1}n) dn
 ={\rm vol}(J')^{-1}v_{{\sf w}_0}\omega_{\pi}(-1) \mathrm{vol}(J^{\prime}\cap N)\varphi_{\dot{\lambda}_L({\sf w}_0)w}.\\
\end{split} 
\end{equation}
Specializing to a symmetric tensor $w$, meaning of the form $w=w_1\otimes w_1$, we conclude 
\begin{equation}\label{c-value}
c(\chi,\sigma)=v_{{\sf w}_0}\omega_{\pi}(-1) \mathrm{vol}(J^{\prime}\cap N).
\end{equation}
(Note that the constant $c(\chi,\sigma)$ is independent of $\chi$). 
\end{proof}

Let us return to the local coefficient defined in \S\ref{LS}. Recall the induced representation $\mathrm{Ind}_U^G(\overline{\psi})$ from \S\ref{whitt}. Let c-$\mathrm{Ind}_U^G(\overline{\psi})$ denote the sub-representation consisting of functions that are compactly supported modulo $U$. Let \textit{sgn} denote the one-dimensional representation of $\mathcal{H}^{\prime}_{K^{\prime}}$ on $\mathbb{C}$ wherein  $\vartheta^{\prime}_{{\sf w}}$ acts as $(-1)^{\ell({\sf w})}$, ${\sf w}\in {\sf W}$. By \cite[Corollary 4.4]{ChanSavin2} and \cite[Theorem 3.4]{ChanSavin1} applied for $\check{\lambda}'$, we have 
\[
 \mathrm{c}\text{-}\mathrm{Ind}_{U^{\prime}}^{G^{\prime}}(\overline{\psi})^{\mathcal{I}^{\prime}} \cong \mathcal{H}'\otimes_{\mathcal{H}'_{K'}} \textit{sgn}\cong  \mathrm{c}\text{-}\mathrm{Ind}_U^G(\overline{\psi})_{\check{\lambda}'}
\]
as ${\mathcal H}(G',1_{{\mathcal I}'})\cong {\mathcal H}(G,\check{\lambda}')$-modules. Thus 
\begin{equation}\label{Gelfand-Graev}
\mathrm{c}\text{-}\mathrm{Ind}_{U^{\prime}}^{G^{\prime}}(\overline{\psi})^{\mathcal{I}^{\prime}}\cong \mathrm{c}\text{-}\mathrm{Ind}_U^G(\overline{\psi})_{\check{\lambda}'}
\end{equation}
as ${\mathcal H}(G',1_{{\mathcal I}'})\cong {\mathcal H}(G,\check{\lambda}')$-modules.

Let $\ast$ denote the linear dual.  We can linearly dualize (\ref{Gelfand-Graev}) to obtain the following isomorphism $\Xi$  of dual spaces as $\mathcal{H}^{\prime} \cong \mathcal{H}$-modules:
\[
\begin{split}
\Xi: \mathrm{Ind}_{U^{\prime}}^{G^{\prime}}(\psi)^{\mathcal{I}^{\prime}} \overset{(1)}{\cong}
 \left({\mathrm{c}\text{-}\mathrm{Ind}_{U'}^{G'}(\overline{\psi})}^{\spcheck}\right)^{\mathcal{I}^{\prime}}
&\overset{(2)}{\cong}
 \left(\mathrm{c}\text{-}\mathrm{Ind}_{U^{\prime}}^{G^{\prime}}(\overline{\psi})^{\mathcal{I}^{\prime}}\right)^{\ast} \\
 & \cong
\left( \mathrm{c}\text{-}\mathrm{Ind}_U^G(\overline{\psi})_{{\check{\lambda}}'} \right)^{\ast}
\overset{(3)} {\cong}
  \left( \mathrm{c}\text{-}\mathrm{Ind}_U^G(\overline{\psi})^{\spcheck} \right)_{\check{\lambda}'}
\overset{(1)}{\cong} \mathrm{Ind}_U^G(\psi)_{\lambda^{\prime}}.
\end{split}
\]
 Here, (1) is given by the duality theorem \cite[\S3.5]{BH}; (2) and (3) follow from the fact that for any smooth representation $(\pi,V)$ of $G$ and a compact open subgroup $K\leq G$ one has $(V^{\spcheck})^K\cong (V^K)^{\ast}$ (cf. \cite[\S3.1]{ChanSavin1}). 
  
Recall the Whittaker functional $\Omega_{\chi}^{\prime}$ in equation (\ref{lcprime}) which is defined with respect to a level zero additive character. We need to shift this to get a functional with respect to $\psi$ which is of level one. To that end, we define $\tilde{\Omega}'_{\chi}(f')=\Omega'_{\chi}(R(\begin{smallmatrix}\varpi_{\k}&\\&1\end{smallmatrix})f')$, $f'\in {\iota}_{B'}^{G'}(\chi)$. Let $\omega'_{\chi} : {\mathcal F}_{B'}(\chi) \rightarrow \mathrm{Ind}_{U^{\prime}}^{G^{\prime}}(\psi)$ be the ``Whittaker map" corresponding to $\tilde{\Omega}'_{\chi}$ determined by Frobenius reciprocity. We can transfer $\omega'_{\chi}$ via the isomorphisms $\Phi_{\chi}$ and $\Xi$ to get the map $(\omega'_{\chi})_G$ of ${\mathcal H}$-modules: 
\begin{equation}
\label{local-coefficient-commute}
\xymatrix@C+=5cc{
\mathcal{F}_P(\sigma \otimes \chi)_{\lambda^{\prime}}    \ar@{.>}[r]^{(\omega'_{\chi})_G}
&  \mathrm{Ind}_U^G(\psi)_{\lambda^{\prime}}  \\
  {\mathcal F}_{B'}(\chi)^{\mathcal{I}^{\prime}}   \ar[u]^{\Phi_{\chi}} \ar[r]^{\omega^{\prime}_{\chi}} &\mathrm{Ind}_{U^{\prime}}^{G^{\prime}}(\psi)^{\mathcal{I}^{\prime}}. \ar[u]_{\Xi}   \\
      }
\end{equation}

On the other hand, let $\omega_{\chi}:  \mathcal{F}_P(\sigma \otimes \chi) \rightarrow \mathrm{Ind}_U^G(\psi)$ be the Whittaker map attached to the Whittaker functional $\Omega_{\chi}$ through Frobenius reciprocity. Let 
\[
(\omega_{\chi})_{\ast}:  \mathcal{F}_P(\sigma \otimes \chi)_{\lambda'} \rightarrow \mathrm{Ind}_U^G(\psi)_{\lambda'}
\]
be the corresponding map of ${\mathcal H}$-modules. Since 
\[
\mathrm{Hom}_{G}( \mathcal{F}_P(\sigma \otimes \chi),\mathrm{Ind}_U^G(\psi)) \cong \mathrm{Hom}_{\mathcal{H}}( \mathcal{F}_P(\sigma \otimes \chi)_{\lambda'}, \mathrm{Ind}_U^G(\psi)_{\lambda'})
\]
is one dimensional (cf. \cite{cass2}), there is a scalar $a_{\chi}$ so that 
\begin{equation}\label{achi}
(\omega_{\chi})_{\ast}=a_{\chi}(\omega'_{\chi})_G.
\end{equation}

Specializing to $\chi=\chi_s$ (see \S\ref{LS}), we obtain $(\omega_s)_{\ast}:= (\omega_{\chi_s})_{\ast}$, $(\omega'_{s})_G:=(\omega'_{\chi_s})_G$ and $a_s:=a_{\chi_s}$ as functions on a rank-one complex torus whose ring of regular functions is ${\mathbb C}[q^{s/2},q^{-s/2}]$. 
\begin{proposition}\label{GL(n)-Whittaker}
Keeping the above notation, $a_s$ is a monomial in $q^{s/2}$. There is a $\alpha\in {\mathbb C}^{\times}$ and $f\in {\mathbb Z}$ so that for $\chi_s$ with ${\iota}^{G'}_{B'}(\chi_s)$ irreducible ($\Leftrightarrow s\neq \pm 1$), we have 
\[
(\omega_s)_{\ast}(\phi_{K',\chi_s})=\alpha q^{-(fs)/2}(1-q_{\a}^{-s-1})\Xi({\mathcal W}'_{s,\rm sp}).
\]
Here, ${\mathcal W}_{s,\rm sp}'$ is the normalized Whittaker function on $G'$ (w.r.t. $\psi$) satisfying  ${\mathcal W}_{s,\rm sp}'(\begin{smallmatrix}\varpi_{\k}^{-1}&\\&1\end{smallmatrix})=1$. 
\end{proposition}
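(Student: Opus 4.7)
The strategy is to evaluate both sides of the defining equation $(\omega_s)_\ast=a_s(\omega'_s)_G$ at the spherical vector $\phi_{K',\chi_s}$ and then independently argue that $a_s$ must be a monomial in $q^{s/2}$. I would first compute $\omega'_{\chi_s}(\phi'_{K',\chi_s})$ on the $G'$ side. Via the embedding $\k^{\times}\hookrightarrow GL_n(F)$, the identity $|\det z|_F=|z|_\k$ together with $q_\a=q_\k$ gives $\chi_1(\varpi_\k)\chi_2^{-1}(\varpi_\k)=q_\a^{-s}$, so Lemma~\ref{GL(2)-local-coefficient} yields $\Omega'_{\chi_s}(\phi'_{K',\chi_s})=1-q_\a^{-s-1}$. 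From the definition $\tilde\Omega'_\chi(f)=\Omega'_\chi(R(\mathrm{diag}(\varpi_\k,1))f)$ and Frobenius reciprocity, the associated Whittaker function satisfies $\omega'_{\chi_s}(\phi'_{K',\chi_s})(\mathrm{diag}(\varpi_\k^{-1},1))=\Omega'_{\chi_s}(\phi'_{K',\chi_s})=1-q_\a^{-s-1}$. Because every $K'$-fixed Whittaker vector in the model of $\iota^{G'}_{B'}(\chi_s)$ is a scalar multiple of $\mathcal W'_{s,\mathrm{sp}}$, this single evaluation forces $\omega'_{\chi_s}(\phi'_{K',\chi_s})=(1-q_\a^{-s-1})\mathcal W'_{s,\mathrm{sp}}$. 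The commutative diagram \eqref{local-coefficient-commute}, which encodes $(\omega'_s)_G\circ\Phi_{\chi_s}=\Xi\circ\omega'_{\chi_s}$, then delivers $(\omega'_s)_G(\phi_{K',\chi_s})=(1-q_\a^{-s-1})\,\Xi(\mathcal W'_{s,\mathrm{sp}})$.

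Next, I would establish that $a_s$ is a monomial in $q^{s/2}$. By \cite[Proposition 2.1]{cass2}, both $\Omega_{\chi_s}$ and $\tilde\Omega'_{\chi_s}$ admit holomorphic extensions in $s$, so $(\omega_s)_\ast$ and $(\omega'_s)_G$ form holomorphic families of $\mathcal H$-module homomorphisms parametrized by $s$ in the complex torus $\mathbb C/(2\pi i/\log q)\mathbb Z$. Both are nonzero on the regular locus $s\neq\pm 1$ by uniqueness of Whittaker models for the irreducible generic representations $\iota_P^G(\sigma\otimes\chi_s)$ and $\iota^{G'}_{B'}(\chi_s)$, and for such $s$ the intertwining space
\[
\mathrm{Hom}_{\mathcal H}\bigl(\mathcal F_P(\sigma\otimes\chi_s)_{\lambda'},\,\mathrm{Ind}_U^G(\psi)_{\lambda'}\bigr)
\]
has dimension one. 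It follows that $a_s=(\omega_s)_\ast/(\omega'_s)_G$ is a well-defined, holomorphic, nowhere-vanishing function of $q^{s/2}$ on the punctured complex plane, and the only such functions are monomials. Hence $a_s=\alpha\,q^{-fs/2}$ for some $\alpha\in\mathbb C^\times$ and $f\in\mathbb Z$, which combined with the preceding paragraph yields the claimed formula.

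The main obstacle will be controlling $a_s$ near the reducibility points $s=\pm 1$, in particular ensuring that no spurious zero or pole develops there. This amounts to showing that the dimension of the Whittaker intertwining space stays constant on a sufficiently large open set and that the holomorphic extensions of $\Omega_{\chi_s}$ and $\tilde\Omega'_{\chi_s}$ do not conspire to introduce singularities in the ratio. The precise values of $\alpha$ and $f$ are not claimed in the proposition, but could in principle be pinned down by directly evaluating $\Omega_{\chi_s}(f_{{\sf w}_0,w,\chi_s})(1)$ for a symmetric tensor $w$: the Iwahori factorization of $J'$ combined with triviality of $\psi^G$ on $J'\cap N$ (in the spirit of Lemma~\ref{lemma-2}) collapses this integral to a volume factor times $\Omega^L(\varphi_{\dot\lambda_L({\sf w}_0)w})$, with the $s$-dependence entering only through the character $\delta_P^{1/2}\chi_s$ evaluated at a single element, and hence producing a monomial in $q^{s/2}$.
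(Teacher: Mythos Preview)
Your overall approach matches the paper's: compute $\omega'_{\chi_s}(\phi'_{K',\chi_s})$ on the $G'$ side via Lemma~\ref{GL(2)-local-coefficient}, then transfer through diagram~\eqref{local-coefficient-commute}. Your treatment of that step is in fact more detailed than the paper's.

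The genuine gap is in the monomial argument. Your assertion that ``holomorphic, nowhere-vanishing functions of $q^{s/2}$ on the punctured complex plane are monomials'' is false as stated (consider $\exp(q^{s/2})$); you need to know that $a_s$ is \emph{rational} in $q^{s/2}$. The paper supplies exactly this missing ingredient by citing \cite[Lemma~2.2]{Sha10}, which gives that $s\mapsto\Omega_{\chi_s}$ and $s\mapsto\tilde\Omega'_{\chi_s}$ take values in $\mathbb C[q^{s/2},q^{-s/2}]$; hence so do $(\omega_s)_\ast$ and $(\omega'_s)_G$, and $a_s$ is rational in $q^{s/2}$.

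Your acknowledged obstacle at $s=\pm 1$ then dissolves as well. The relevant point is not uniqueness of Whittaker models (which is indeed delicate at reducibility points) but simply that $\Omega_{\chi_s}$ and $\tilde\Omega'_{\chi_s}$ are nonzero for \emph{every} $s$, which is part of the construction in \cite[Proposition~2.1]{cass2} that you already invoke. Since $\Phi_{\chi_s}$ and $\Xi$ are isomorphisms, both $(\omega_s)_\ast$ and $(\omega'_s)_G$ are nonzero maps for all $s$; thus the rational function $a_s$ has neither zeros nor poles on $\mathbb C^\times$ and is therefore a monomial $\alpha q^{-fs/2}$. This is precisely how the paper argues.
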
 
\begin{proof}
First, the functions $s\mapsto \omega'_s$ and $s\mapsto \omega_s$ are regular in a certain sense and belong to the polynomial ring ${\mathbb C}[q^{s/2},q^{-s/2}]$ (cf. \cite[Lemma 2.2]{Sha10}). This implies the same for $(\omega_s)_{\ast}$ and $(\omega'_s)_G$. (Note that $\Phi_{\chi_s}$ as a function of $\chi_s$ is regular.) Consequently $s\mapsto a_s$ is a polynomial in $q^{s/2},q^{-s/2}$. On the other hand since both $(\omega_s)_{\ast}$ and $(\omega'_s)_G$ are non-zero, it follows that $a_s$ must be a monomial. Write $a_s=\alpha q^{-(fs)/2}$, $\alpha\in {\mathbb C}^{\times}, f\in {\mathbb Z}$. Using Lemma~\ref{GL(2)-local-coefficient} we see that 
\[
\omega'_{s}(\phi'_{K',\chi_s})=(1-q_{\a}^{-s-1}){\mathcal W}'_{s,\rm sp}
\]
for $s\neq \pm 1$. Evaluating equation (\ref{achi}) at $\phi'_{K',\chi_s}$ and chasing diagram (\ref{local-coefficient-commute}) we get the desired conclusion.
\end{proof}

This proposition immediately implies the following corollary which is also the content of \cite[Corollary 4.1.7]{Kim}. However, the result in loc.cit. is incomplete due to undetermined ``sign" factors and is also missing the crucial ``monomial term" which we believe should be present. We fix that here and note in passing that unlike the proof in loc.cit. our proof does not rely on the theory of intertwining operators. 

\begin{corollary}[The functional equation for Whittaker functions] Suppose $s\neq \pm 1$. Then
\[
 (\omega_{-s})_{\ast}(\phi_{K',\chi_{-s}})=q^{fs}\frac{L(s+1,\pi \times \check{\pi})}{L(-s+1,\pi \times \check{\pi})} (\omega_s)_{\ast}(\phi_{K',\chi_{s}}).
 \]
\end{corollary}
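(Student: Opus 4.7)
The plan is to derive the corollary directly from Proposition~\ref{GL(n)-Whittaker} by evaluating that proposition at both $s$ and $-s$, dividing, and identifying the resulting rational prefactor with the claimed $L$-factor ratio. The only nontrivial ingredient beyond Proposition~\ref{GL(n)-Whittaker} will be the ${\sf w}_0$-symmetry of the normalized spherical Whittaker function $\mathcal{W}'_{s,\mathrm{sp}}$ on $G'=GL_2(\k)$.

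Specifically, I would first apply Proposition~\ref{GL(n)-Whittaker} at parameter $s$ and at $-s$ to obtain
\[
(\omega_s)_{\ast}(\phi_{K',\chi_s})=\alpha\,q^{-fs/2}(1-q_{\a}^{-s-1})\,\Xi(\mathcal{W}'_{s,\mathrm{sp}}),
\]
\[
(\omega_{-s})_{\ast}(\phi_{K',\chi_{-s}})=\alpha\,q^{fs/2}(1-q_{\a}^{s-1})\,\Xi(\mathcal{W}'_{-s,\mathrm{sp}}).
\]
Taking the ratio and combining with $L(t,\pi\times\check\pi)=(1-q_{\a}^{-t})^{-1}$ from \eqref{L-equal-RSLS} immediately identifies the rational prefactor as
\[
q^{fs}\cdot \frac{1-q_{\a}^{s-1}}{1-q_{\a}^{-s-1}}=q^{fs}\cdot \frac{L(s+1,\pi\times\check\pi)}{L(-s+1,\pi\times\check\pi)},
\]
exactly as claimed. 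This reduces the corollary to the single identity $\Xi(\mathcal{W}'_{s,\mathrm{sp}})=\Xi(\mathcal{W}'_{-s,\mathrm{sp}})$, which in turn (since $\Xi$ is a fixed linear isomorphism) reduces to the equality $\mathcal{W}'_{s,\mathrm{sp}}=\mathcal{W}'_{-s,\mathrm{sp}}$ of normalized spherical Whittaker functions on $G'$.

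Establishing this last ${\sf w}_0$-symmetry is where the main subtlety lies, although the result itself is standard. The conceptual reason is that $\iota_{B'}^{G'}(\chi_s)$ and $\iota_{B'}^{G'}(\chi_{-s})={\iota}_{B'}^{G'}({\sf w}_0(\chi_s))$ are in the same Weyl orbit, so they share a unique (up to scalar) spherical Whittaker function; the Casselman-Shalika formula expresses its value on a diagonal element as a rational expression in $\chi_1(\varpi_{\k})$ and $\chi_2(\varpi_{\k})$ which is manifestly symmetric under the interchange induced by ${\sf w}_0$, so the normalization $\mathcal{W}'(\mathrm{diag}(\varpi_{\k}^{-1},1))=1$ is stable under $s\mapsto -s$. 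Alternatively, one can give a short ``telescoping'' derivation bypassing Casselman-Shalika: feed Lemma~\ref{intertwining-spherical} into the local coefficient equation \eqref{lcprime}, evaluate both sides on $\phi'_{K',\chi_s}$ using Lemma~\ref{GL(2)-local-coefficient}, and observe that
\[
C_{\psi'}(\chi_s)\cdot c_{{\sf w}_0}(\chi_s)=\frac{1-q_{\a}^{-s}}{1-q_{\a}^{s-1}}\cdot \frac{1-q_{\a}^{-s-1}}{1-q_{\a}^{-s}}=\frac{1-q_{\a}^{-s-1}}{1-q_{\a}^{s-1}};
\]
cancellation then yields $(1-q_{\a}^{-s-1})\mathcal{W}'_{s,\mathrm{sp}}=(1-q_{\a}^{-s-1})\mathcal{W}'_{-s,\mathrm{sp}}$, forcing $\mathcal{W}'_{s,\mathrm{sp}}=\mathcal{W}'_{-s,\mathrm{sp}}$ for $s\neq\pm 1$.

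Combining these ingredients completes the proof. The restriction $s\neq\pm 1$ in the corollary coincides with the non-vanishing requirement $(1-q_{\a}^{\pm s-1})\neq 0$ appearing throughout the argument, and the irreducibility hypotheses needed to apply Proposition~\ref{GL(n)-Whittaker}. No additional input about the intertwining operator on the $GL_{2n}(F)$ side is required beyond what is already packaged into Proposition~\ref{GL(n)-Whittaker}, which is the sharpening of \cite[Corollary 4.1.7]{Kim} obtained in the preceding subsection.
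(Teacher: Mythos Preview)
Your proposal is correct and follows essentially the same approach as the paper: apply Proposition~\ref{GL(n)-Whittaker} at $s$ and at $-s$, use \eqref{L-equal-RSLS} to identify the rational factor, and invoke ${\sf w}_0(\chi_s)=\chi_{-s}$ for the remaining symmetry. You are simply more explicit than the paper about the step $\mathcal{W}'_{s,\mathrm{sp}}=\mathcal{W}'_{-s,\mathrm{sp}}$, which the paper absorbs into the observation ${\sf w}_0(\chi_s)=\chi_{-s}$; your justification via Weyl-invariance of the normalized spherical Whittaker function (the two irreducible principal series share a Whittaker model and hence a unique $K'$-fixed Whittaker vector) is the standard one and suffices.
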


\begin{proof}
One simply has to observe that ${\sf w}_0 (\chi_s)=\chi_{-s}$ and \eqref{L-equal-RSLS}. We apply Proposition~\ref{GL(n)-Whittaker} twice.  
\end{proof}

We now present the precise shape of the local coefficient $C_{\psi}(s,\pi\times\pi)$. 
\begin{theorem}
\label{thm-nonsplit-lc}
Let $\pi\cong \mathrm{c}\text{-}\mathrm{Ind}^{GL_n(F)}_{E^{\times}J(\beta,\mathfrak{A})}(\widetilde{\lambda})$ be an irreducible supercuspidal representation of $GL_n(F)$. Then
\begin{equation}
\label{nonsplit-lc}
  C_{\psi}(s,\pi \times \pi)=\mathrm{vol}(J^{\prime} \cap N)^{-1}\omega_{\pi}(-1)v_{{\sf w}_0}^{-1}q^{-fs} \frac{L(1-s,\check{\pi} \times \pi )}{L(s,\pi \times \check{\pi})}
\end{equation}
with $f \in \mathbb{Z}$ as in Proposition~\ref{GL(n)-Whittaker}.
\end{theorem}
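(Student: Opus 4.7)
The plan is to evaluate both sides of the defining equation \eqref{lc} at the spherical vector $\phi_{K',\chi_s} \in \mathcal{F}_P(\sigma \otimes \chi_s)_{\lambda^{\prime}}$, and then to assemble Proposition \ref{GL(n)-intertwining} with the corollary following Proposition \ref{GL(n)-Whittaker}. Since both sides of \eqref{lc} are $G$-equivariant, evaluating at this one vector suffices.

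First I would identify the restriction of $\chi_s$ to the torus $A' \subset G'$. Since $\mathfrak{k}^{\times}$ embeds in $GL_n(F)$ by multiplication on $V_i \cong \mathfrak{k}$, the $F$-determinant of $x \in \mathfrak{k}^{\times}$ is $N_{\mathfrak{k}/F}(x)$, so $\|\det x\|_F = \|x\|_{\mathfrak{k}}$. Writing $\chi_s|_{A'} = \chi_1 \otimes \chi_2$ with $\chi_1 = \|\det(\cdot)\|^{s/2}$ and $\chi_2 = \|\det(\cdot)\|^{-s/2}$, this forces
\[
   \chi_1(\varpi_{\mathfrak{k}})\chi_2^{-1}(\varpi_{\mathfrak{k}}) = q_{\mathfrak{a}}^{-s}.
\]
Together with \eqref{L-equal-RSLS}, the Casselman factor of Proposition \ref{GL(n)-intertwining} collapses to
\[
   c_{{\sf w}_0}(\chi_s) = \frac{1-q_{\mathfrak{a}}^{-s-1}}{1-q_{\mathfrak{a}}^{-s}} = \frac{L(s,\pi\times\check\pi)}{L(s+1,\pi\times\check\pi)}.
\]

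With that in hand, applying Proposition \ref{GL(n)-intertwining} to $\phi_{K',\chi_s}$ in \eqref{lc} and rearranging gives
\[
   C_{\psi}(s,\pi\times\pi) = \frac{1}{\mathrm{vol}(J'\cap N)\, v_{{\sf w}_0}\,\omega_{\pi}(-1)\, c_{{\sf w}_0}(\chi_s)}\cdot \frac{\Omega(s,\sigma)(\phi_{K',\chi_s})}{\Omega(-s,{\sf w}_0(\sigma))(\phi_{K',\chi_{-s}})}.
\]
The ratio of Whittaker values on the right is precisely the content of the corollary following Proposition \ref{GL(n)-Whittaker}: evaluating that identity at $1 \in G$ (after applying both sides to any fixed non-zero $w \in W$) yields
\[
   \frac{\Omega(s,\sigma)(\phi_{K',\chi_s})}{\Omega(-s,{\sf w}_0(\sigma))(\phi_{K',\chi_{-s}})} = q^{-fs}\,\frac{L(1-s,\check\pi\times\pi)}{L(s+1,\pi\times\check\pi)} = q^{-fs}\,\frac{1-q_{\mathfrak{a}}^{-s-1}}{1-q_{\mathfrak{a}}^{s-1}}.
\]

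Substituting, the factors $(1-q_{\mathfrak{a}}^{-s-1})$ in numerator and denominator cancel, the identity $\omega_{\pi}(-1)^{-1} = \omega_{\pi}(-1)$ absorbs the sign, and \eqref{L-equal-RSLS} rewrites the surviving $(1-q_{\mathfrak{a}}^{-s})/(1-q_{\mathfrak{a}}^{s-1})$ as $L(1-s,\check\pi\times\pi)/L(s,\pi\times\check\pi)$. This gives \eqref{nonsplit-lc} exactly.

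The main obstacle for this final step is purely bookkeeping: one must ensure that the scalar normalizations producing $\omega_{\pi}(-1)$ (from the Hecke-algebra comparison in Lemmas \ref{lemma-s1}--\ref{lemma-alpha}, which is in turn the resolution of the sign ambiguity of \cite[Remark 4.2.6]{Kim}) and the monomial $q^{-fs}$ (from the comparison $a_s = \alpha q^{-fs/2}$ of Proposition \ref{GL(n)-Whittaker}) both thread through the cancellation correctly and on the same side of \eqref{lc}. All conceptual difficulty has been absorbed into Propositions \ref{GL(n)-intertwining} and \ref{GL(n)-Whittaker}; given those, the present theorem is a direct substitution.
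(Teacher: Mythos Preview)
Your proposal is correct and follows essentially the same route as the paper: apply Proposition~\ref{GL(n)-intertwining} to the spherical vector, then invoke the corollary to Proposition~\ref{GL(n)-Whittaker} (the functional equation for Whittaker functions) to convert $(\omega_{-s})_\ast(\phi_{K',\chi_{-s}})$ back into $(\omega_s)_\ast(\phi_{K',\chi_s})$, and cancel. The only cosmetic difference is that the paper works throughout with the module maps $(\omega_s)_\ast$ into $\mathrm{Ind}_U^G(\psi)_{\lambda'}$ and cancels the nonzero element $(\omega_s)_\ast(\phi_{K',\chi_s})$, whereas you evaluate at a fixed $w\in W$ and at $1\in G$ to obtain scalars; these are equivalent, though your phrase ``both sides of \eqref{lc} are $G$-equivariant'' is slightly imprecise (equation \eqref{lc} is an equality of functionals, and what you really use is that the scalar $\Omega(s,\sigma)(\phi_{K',\chi_s}(w))$ is nonzero for a suitable $w$).
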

\begin{proof}
Since $  C_{\psi}(s,\pi \times \pi)$ is a rational function of $q^{-s}$, it is enough to prove the assertion on a Zariski open subset of $X(L)$. In particular, we may assume the relevant induced representations are all irreducible. We apply $(\omega_{-s})_{\ast}$ to both sides of the equation in Proposition \ref{GL(n)-intertwining} and utilize the above corollary to get 
\begin{equation}
\label{dual-nonsplit}
\begin{split}
  \omega_{-s} \circ A(\chi_s,\sigma,{\sf w}_0)\circ \phi_{K^{\prime},\chi_s}
  &=\mathrm{vol}( J^{\prime} \cap N) v_{{\sf w}_0}\omega_{\pi}(-1) \frac{L(s,\pi \times \check{\pi})}{L(s+1,\pi \times \check{\pi})}(\omega_{-s} \circ \phi_{K^{\prime},\chi_{-s}})\\
  &=\mathrm{vol}( J^{\prime} \cap N) v_{{\sf w}_0}\omega_{\pi}(-1)q^{fs}\frac{L(s,\pi \times \check{\pi})}{L(1-s,\pi \times \check{\pi})} (\omega_s\circ \phi_{K',\chi_s})
\end{split}
\end{equation}
Now, appealing to the definition of $C_{\psi}(s,\pi\times\pi)$ (cf. \eqref{lc}), we obtain
\[
  C^{-1}_{\psi}(s,\pi \times \pi) (\omega_s)_{\ast}(\phi_{K',\chi_s}) =\mathrm{vol}( J^{\prime} \cap N) v_{{\sf w}_0}\omega_{\pi}(-1)q^{fs}\frac{L(s,\pi \times \check{\pi})}{L(1-s,\pi \times \check{\pi})} (\omega_s)_{\ast}( \phi_{K',\chi_s}).
\]
The result now follows since $(\omega_s)_{\ast}(\phi_{K',\chi_s})\neq 0$. 
\end{proof}

Put 
\[
 \varepsilon^{\rm LS}(s,\pi \times {\pi},\psi)=\mathrm{vol}(J^{\prime} \cap N)^{-1}\omega_{\pi}(-1)v_{{\sf w}_0}^{-1}q^{-fs}.
\]
The integer can be viewed as the ``Langlands-Shahidi conductor" and we denote it as $f^{\rm LS}(\pi\times\pi,\psi)$. It is not difficult to check that its dependence on $\psi$ is given by 
\[
f^{\rm LS}(\pi\times\pi,\psi)=-n^2\ell_{\psi}+f^{\rm LS}(\pi\times\pi),
\]
where $f^{\rm LS}(\pi\times\pi)$ is independent of $\psi$ (cf. \cite[Theorem 2.1 (iv)]{HL}). Of course, for us, $\ell_{\psi}=1$ by choice. In sum \eqref{nonsplit-lc} takes the form 
\[
  C_{\psi}(s,\pi \times \pi)=\varepsilon^{\rm LS}(s,\pi \times \pi, \psi) \frac{L(1-s,\pi\times\check{\pi})}{L(s,\pi \times \check{\pi})}.
\]

\subsection{ Re-visiting the Plancherel constant}
\label{revisit}
For any $\pi_1,\pi_2\in {\mathcal A}_{n}^{0}(F)$, the \textit{Plancherel constant} is a scalar valued function $\mu(s,\pi_1\times\pi_2)\in {\mathbb C}$ attached to the pair $(\pi_1,\pi_2)$  by the defining relation 
\[
   A(-s,\pi_2 \times \pi_1) \circ  A(s,\pi_1 \times \pi_2)
=\mu(s,\pi_1 \times \pi_2)^{-1}
\]
on a Zariski open subset of ${\mathbb C}$. It is a rational function in $q^{-s}$ and clearly depends on the measures defining intertwining operators. By \cite[Proposition 3.11]{Sh81}, we have 
\begin{equation}\label{plancherel-lc}
\mu(s,\pi_1\times\pi_2)=C_{\psi}(s,\pi_1\times\pi_2)C_{\psi}(-s,\pi_2\times\pi_1).
\end{equation}
Now, we return to $\pi_1 \cong \mathrm{c}\text{-}\mathrm{Ind}^{GL_n(F)}_{E^{\times}J(\beta,\a)}(\tilde{\lambda}_1)$ and $\pi_2 \cong  \mathrm{c}\text{-}\mathrm{Ind}^{GL_n(F)}_{E^{\times}J(\beta,\a)}(\tilde{\lambda}_2)$ as in \S\ref{LS} so that they are unitary and associated to the same endo-class. In the two cases considered there, we have the following expressions for the plancherel constant:
\begin{itemize}
\item Suppose $\pi_2 \not\cong \pi_1 \otimes(\chi\circ\det)$ for any unramified character $\chi$ of $F^{\times}$. It follows from Proposition~\ref{lemma-1} that 
\[
\mu(s,\pi_1\times\pi_2)\equiv {\rm vol}(J\cap N)^{-1}{\rm vol}(J\cap\overline{N})^{-1}.
\]
\item Suppose $\pi=\pi_1\cong \pi_2$. Applying Proposition \ref{GL(n)-intertwining} with $\chi=\chi_s$, we obtain 
\[
\label{volume-plancherel}
 \mu(s,\pi \times \pi)
 =\mathrm{vol}(J^{\prime} \cap N)^{-2}v_{{\sf w}_0}^{-2} \frac{L(1+s,\pi \times \check{\pi})}{L(s,\pi \times \check{\pi})} \frac{L(1-s, \pi\times \check{\pi} )}{L(-s,\pi\times \check{\pi} )}.
\]
\end{itemize}

We begin with certain volume computations. Choose Haar measures $dn$ on $N$ and $d\overline{n}$ on $\overline{N}$ satisfying the condition in \S\ref{sec-measures} relative to our chosen $\psi$. In short, we refer to $(dn,d\overline{n})$ as a dual pair of measures. Let us normalize the measure on $L=GL_n(F)\times GL_n(F)$ so that
\[
 \mathrm{vol}(H^1(\beta,\mathfrak{a}))=\frac{\mathrm{vol}_F(e_n \mathfrak{p}^{m+1})\mathrm{vol}(J\cap N)\mathrm{vol}(J\cap\overline{N})}{[(U_n(F)\cap J(\beta,\a)): (U_n(F)\cap H^1(\beta,\a))]}.
  \]
This amounts to saying that the measure on $L$ is so that 
\begin{equation}
\label{L-normal}
 \upsilon \mathrm{vol}(J\cap N)\mathrm{vol}(J\cap\overline{N})=1,
\end{equation}
where $v$ is as in Proposition~\ref{RS-gamma}. Note that this is independent of the choice of the dual pair $(dn,d\overline{n})$. In fact, we have the following:
\begin{lemma}
\label{split-volume}
For a dual pair of Haar measures $(dn,d\overline{n})$, we have
\[
 \mathrm{vol}(J^{\prime}\cap N) \mathrm{vol}(J^{\prime}\cap\overline{N})=\mathrm{vol}(J\cap N)\mathrm{vol}(J\cap\overline{N})=q_{\mathfrak{a}}^m.
\]
\end{lemma}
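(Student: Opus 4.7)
The proof breaks into two parts, one for each equality.

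For the first equality, I use that $(J,\lambda)=(J'_m,\lambda'_m)$ was constructed from $(J',\lambda')$ by conjugation by $x=\mathrm{diag}(\varpi_E^{m+1}I_n,I_n)\in L$. Thus $J\cap N=x(J'\cap N)x^{-1}$ and $J\cap \overline{N}=x(J'\cap \overline{N})x^{-1}$. Conjugation by $x\in L$ rescales $dn$ by the modulus $\delta_P(x)$ and $d\bar n$ by its reciprocal $\delta_P(x)^{-1}$. On the other hand, the dual-pair property recalled in Subsection~\ref{sec-measures} ensures that the product measure $dn\otimes d\bar n$ on $N\times\overline{N}$ is invariant under any common rescaling $dn\mapsto c\,dn$, $d\bar n\mapsto c^{-1}d\bar n$. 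Consequently, the product $\mathrm{vol}(K\cap N)\mathrm{vol}(K\cap\overline{N})$ is unchanged when the compact open subgroup $K$ is conjugated by an element of $L$, which applied to $K=J'$ gives the first equality immediately.

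For the second equality, I compute directly from \eqref{J-prime}. Under the additive identifications $n(X)\leftrightarrow X$, $\bar n(Y)\leftrightarrow Y$, the subgroups $J'\cap N$ and $J'\cap\overline{N}$ correspond to the $\mathfrak{a}$-lattices $\varpi_E^{-1}\mathfrak{H}^1(\beta,\mathfrak{a})$ and $\varpi_E\mathfrak{J}(\beta,\mathfrak{a})$ in $M_n(F)$, respectively. Since $v_{\mathfrak{a}}(\varpi_E)=1$ implies $[\mathfrak{L}:\varpi_E\mathfrak{L}]=q_\mathfrak{a}^n$ for any $\mathfrak{a}$-lattice $\mathfrak{L}$, the scalings by $\varpi_E^{\pm 1}$ contribute reciprocal factors $q_\mathfrak{a}^{\pm n}$, and the product of volumes therefore reduces to
\[
\mathrm{vol}_{N,dn}(\mathfrak{H}^1(\beta,\mathfrak{a}))\cdot\mathrm{vol}_{\overline{N},d\bar n}(\mathfrak{J}(\beta,\mathfrak{a})).
\]
The plan is then to show this common value equals $q_\mathfrak{a}^m$ by invoking the standard duality between the lattices $\mathfrak{H}^1(\beta,\mathfrak{a})$ and $\mathfrak{J}(\beta,\mathfrak{a})$ under the trace form (cf.\ \cite[\S3.1]{Kut3}), suitably twisted by $\beta$, and by using the identification $m=n\cdot \mathrm{ord}_E(\nu)/d$ of Subsection~\ref{sec-ni} to match the resulting shift against the dual-pair convention of \cite[\S5.2]{BHK}.

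The main obstacle is keeping the three normalizations at play --- the dual-pair convention of BHK, the self-dual trace pairing on $M_n(F)$, and the $\beta$-twisted Bushnell-Kutzko duality between $\mathfrak{H}^1(\beta,\mathfrak{a})$ and $\mathfrak{J}(\beta,\mathfrak{a})$ --- precisely aligned; once this alignment is carried out, the exponent $m$ should emerge as an elementary residue-class count measuring the discrepancy between the BK duality and the chosen dual-pair normalization.
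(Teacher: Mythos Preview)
Your treatment of the first equality agrees with the paper's: both observe that the product $\mathrm{vol}(K\cap N)\mathrm{vol}(K\cap\overline{N})$ is invariant under conjugation by elements of $L$, which is immediate from the dual-pair convention of \S\ref{sec-measures}, and then apply this with $K=J'$ and the conjugating element $x=\mathrm{diag}(\varpi_E^{m+1}I_n,I_n)$.

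For the second equality your route diverges sharply from the paper's, and the final step you sketch is a genuine gap rather than bookkeeping. The paper does \emph{not} compute the product of volumes from the lattice descriptions \eqref{J-prime}. Instead it argues indirectly: since the group $J'$ depends only on the underlying simple stratum, one may place oneself in the split case $\tau_1\ncong\tau_2$. There, Theorem~\ref{LS-equal} together with the normalization \eqref{L-normal} gives $C_\psi(s,\pi_1\times\pi_2)=\omega_{\pi_2}(-1)^n\gamma(s,\pi_1\times\check\pi_2,\psi)$. Comparing the Plancherel relation \eqref{plancherel-lc} (in the split case $\mu\equiv\mathrm{vol}(J\cap N)^{-1}\mathrm{vol}(J\cap\overline N)^{-1}$) with the expression for $C_\psi(-s,\cdot)C_\psi(s,\cdot)$ coming from \eqref{sum-shell} yields $|I_m|^2=\mathrm{vol}(J\cap N)\mathrm{vol}(J\cap\overline N)$; independently, the Rankin--Selberg functional equation \eqref{RS-FE} combined with Proposition~\ref{RS-gamma} gives $\upsilon^2 q_{\mathfrak a}^m|I_m|^2=1$. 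Since \eqref{L-normal} says $\upsilon\,\mathrm{vol}(J\cap N)\mathrm{vol}(J\cap\overline N)=1$, the value $q_{\mathfrak a}^m$ drops out.

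The direct lattice computation you propose is essentially the one carried out in \cite[Theorem~6.5]{BHK}, and, as the paper itself records in the remark immediately following Lemma~\ref{split-volume}, that computation produces the answer $q^{-n^2\mathfrak{c}(\beta)/d^2}$ in terms of the generalized discriminant $\mathfrak{c}(\beta)$. The identity $n\mathfrak{c}(\beta)/d^2=-m/e$ needed to reconcile this with $q_{\mathfrak a}^m$ is then presented by the authors as a \emph{consequence} of having both computations in hand, one they note was previously known only for tame supercuspidals via \cite{Tam}. Thus the ``alignment'' you propose between the $\beta$-twisted Bushnell--Kutzko duality and the formula $m=n\,\mathrm{ord}_E(\nu)/d$ is not a normalization exercise but the actual content you would have to supply; the references you invoke (\cite[\S3.1]{Kut3}, \cite[\S5.2]{BHK}) do not furnish it in this generality.
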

\begin{proof}
It is clear that the product of volumes $\mathrm{vol}(J\cap N) \mathrm{vol}(J\cap\overline{N})$ is invariant under conjugation by $\begin{psmallmatrix}\varpi_{E}^{m+1}I_n&\\&I_n\end{psmallmatrix}$. Therefore
\[
 \mathrm{vol}(J\cap N) \mathrm{vol}(J\cap\overline{N})=\mathrm{vol}(J^{\prime}\cap N) \mathrm{vol}(J^{\prime}\cap\overline{N}).
\]
Since the cover $(J',\lambda')$ is the same in both cases, we may assume that we are in the split case, i.e., the first bullet above, for the purpose of calculating the said product of volumes. In this situation, with the measure on $L$ normalized as above, it follows from (\ref{L-normal}) and Theorem~\ref{LS-equal} that 
\[
C_{\psi}(s,\pi_1\times\pi_2)=\omega_{\pi_2}(-1)^n\gamma(s,\pi_1\times\check{\pi}_2,\psi)
\]

Appealing to \eqref{sum-shell}, we have 
\[
\begin{split}
&C_{\psi}(-s,\pi_2 \times \pi_1) C_{\psi}(s,\pi_1 \times \pi_2)\\
&=\omega_{\pi_1}(-1)\omega_{\pi_2}(-1) \mathrm{vol}(J\cap N)^{-2} \mathrm{vol}(J\cap\overline{N})^{-2} \\
&\quad \times \int\limits_{J(\beta,\a)}  {\mathcal W}_1((\varpi_E^{m}X)^{-1}){\mathcal W}_2(\varpi_E^mX) \phi_m(X) dX
\int\limits_{J(\beta,\a)}  {\mathcal W}_1(\varpi_E^mX){\mathcal W}_2((\varpi_E^{m}X)^{-1}) \phi_m(X) dX\\
&=\mathrm{vol}(J\cap N)^{-2} \mathrm{vol}(J\cap\overline{N})^{-2} |I_m|^{2}.\\
\end{split}
\]
Here, the second equality follows from Proposition \ref{Whitt} along with changing the variable $-X \mapsto X$ in one of the inetgrals. Combining (\ref{plancherel-lc}) with the corresponding expression for the plancherel constant, we get 
\[
|I_m|^2=\mathrm{vol}(J\cap N) \mathrm{vol}(J\cap\overline{N}).
\]
On the other hand, using the functional equation \eqref{RS-FE} and Proposition \ref{RS-gamma}, we obtain 
\[
 1=\gamma(1-s,\check{\pi}_1 \times \pi_2,\overline{\psi})\gamma(s,\pi_1 \times \check{\pi}_2,\psi )
 =\upsilon^2 q_{\mathfrak{a}}^m |I_m|^{2}.
\]  
Taking the normalization \eqref{L-normal} into account, we conclude 
\[
\mathrm{vol}(J\cap N) \mathrm{vol}(J\cap\overline{N}) =\upsilon^{-1}=q_{\a}^{m}. 
\]
\end{proof}
\begin{remark}
A different computation \cite[Theorem 6.5]{BHK} shows that the above product of volumes is also given as 
\[
 \mathrm{vol}(J^{\prime}\cap N) \mathrm{vol}(J^{\prime}\cap\overline{N})=q^{-\frac{n^2\mathfrak{c}(\beta)}{d^2}},
\]
where $\mathfrak{c}(\beta)$ is ``the generalized discriminant" as in \cite[\S6.4]{BHK}. Equating the two expressions for the product of volumes gives the relation 
\[
 \frac{n\mathfrak{c}(\beta)}{d^2} =-\frac{m}{e}.
\]
This seems to generalize the result \cite[\S5A]{Tam} which establishes this equality only for tame supercuspidal representations. 
\end{remark}

\begin{lemma}\label{volume-conductor}
With the measure on the Levi subgroup $L$ fixed as above and taking the measure $dn$ to be the one that underlies Proposition~\ref{GL(n)-intertwining}, for the corresponding dual pair $(dn,d\overline{n})$ we have 
\[
{\rm vol}(J'\cap N)=q_{\a}^{\frac{m+1}{2}}v_{{\sf w}_{0}}^{-1};\quad {\rm vol}(J'\cap \overline{N})=q_{\a}^{\frac{m-1}{2}}v_{{\sf w}_{0}}.
\]
\end{lemma}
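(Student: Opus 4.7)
The product $\mathrm{vol}(J'\cap N)\cdot\mathrm{vol}(J'\cap\overline N)=q_{\a}^m$ provided by Lemma~\ref{split-volume} already makes the two claimed formulas equivalent, so it suffices to prove the first. My plan is to compute the Plancherel constant $\mu(s,\pi\times\pi)$ in two distinct ways and then match the resulting leading scalar constants.

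The first way is to combine \eqref{plancherel-lc} with Theorem~\ref{thm-nonsplit-lc} (equivalently, to iterate Proposition~\ref{GL(n)-intertwining}); this gives, as already recorded in the second bullet preceding Lemma~\ref{split-volume},
\[
\mu(s,\pi\times\pi)=\mathrm{vol}(J'\cap N)^{-2}\,v_{{\sf w}_0}^{-2}\,\frac{L(1+s,\pi\times\check\pi)\,L(1-s,\pi\times\check\pi)}{L(s,\pi\times\check\pi)\,L(-s,\pi\times\check\pi)},
\]
the monomial $q^{-fs}$ from Theorem~\ref{thm-nonsplit-lc} cancelling between the $s$ and $-s$ copies of $C_\psi$.

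The second way is to invoke Shahidi's classical product formula $\mu(s,\pi\times\pi)=\gamma(s,\pi\times\check\pi,\psi)\,\gamma(-s,\check\pi\times\pi,\overline\psi)$ and to apply Proposition~\ref{RS-gamma-equal} to each factor separately. Since the numerical invariant $m$ depends only on the underlying stratum and on the level of the additive character, it is unchanged on passing from $(\pi,\psi)$ to $(\check\pi,\overline\psi)$; combined with $L(\,\cdot\,,\check\pi\times\pi)=L(\,\cdot\,,\pi\times\check\pi)$ and $\omega_\pi(-1)^{2(n-1)}=1$, a short calculation yields
\[
\mu(s,\pi\times\pi)=q_\a^{-(m+1)}\,\frac{L(1+s,\pi\times\check\pi)\,L(1-s,\pi\times\check\pi)}{L(s,\pi\times\check\pi)\,L(-s,\pi\times\check\pi)}.
\]

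Equating the two expressions, the $L$-ratios cancel and one is left with $\mathrm{vol}(J'\cap N)^2\,v_{{\sf w}_0}^2=q_\a^{m+1}$. Taking positive square roots (all quantities being positive reals) gives the first claimed equality, and Lemma~\ref{split-volume} then forces $\mathrm{vol}(J'\cap\overline N)=q_\a^{(m-1)/2}\,v_{{\sf w}_0}$. The main obstacle I anticipate is avoiding circularity with the paper's reproof of \eqref{eqn-equal}: if Lemma~\ref{volume-conductor} is intended as a step \emph{towards} \eqref{eqn-equal} rather than a consequence of it, then the product formula $\mu=\gamma\,\gamma^{\vee}$ should be imported directly from \cite{Sha6}, or established independently (for instance by squaring the intertwining operator and matching against the Rankin--Selberg functional equation \eqref{RS-FE}), rather than deduced from \eqref{eqn-equal} itself.
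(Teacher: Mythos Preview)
Your argument is mathematically sound, but it takes a genuinely different route from the paper, and the circularity you flagged is exactly the issue.

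The paper's proof is a direct local computation and uses no Rankin--Selberg input at all. It returns to the relation \eqref{eqn-Aint}, $A(\chi,\sigma,{\sf w}_0)=c(\chi,\sigma)\,\Phi_\ast(A(\chi,{\sf w}_0))$ with $c(\chi,\sigma)=v_{{\sf w}_0}\omega_\pi(-1)\,\mathrm{vol}(J'\cap N)$, but now applies it to the basis vector $\phi_{1,\chi}$ rather than to $\phi_{K',\chi}$. Using Lemma~\ref{intertwining-spherical} on the $G'$ side gives
\[
A(\chi,\sigma,{\sf w}_0)\circ\phi_{1,\chi}=c(\chi,\sigma)\bigl((c_{{\sf w}_0}(\chi)-1)\phi_{1,\chi}+q_\a^{-1}\omega_\pi(-1)v_{{\sf w}_0}\phi_{{\sf w}_0,\chi}\bigr).
\]
Evaluating at a symmetric $w$ and then at the point ${\sf w}_0$, the right-hand side collapses to $\mathrm{vol}(J')^{-1}q_\a^{-1}v_{{\sf w}_0}^2\,\mathrm{vol}(J'\cap N)\varphi_w$, while the left-hand side becomes $\int_N f_{1,w,\chi}({\sf w}_0^{-1}n{\sf w}_0)\,dn$; since $f_{1,w,\chi}$ is supported on $PJ'$, this is an integral over $J'\cap\overline N$ and equals $\mathrm{vol}(J')^{-1}\mathrm{vol}(J'\cap\overline N)\varphi_w$. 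Thus one obtains directly the \emph{ratio}
\[
\mathrm{vol}(J'\cap\overline N)=q_\a^{-1}v_{{\sf w}_0}^2\,\mathrm{vol}(J'\cap N),
\]
and then Lemma~\ref{split-volume} finishes as you do.

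Your approach instead compares the Hecke-theoretic Plancherel constant with the Rankin--Selberg product $\gamma\gamma^\vee$. This is shorter and the arithmetic checks out (your $q_\a^{-(m+1)}$ matches $q^{f(\pi\times\check\pi,\psi)}$ via Proposition~\ref{RS-gamma-equal}). But the identity $\mu=\gamma\gamma^\vee$ is precisely the Theorem at the end of \S\ref{revisit}, which the paper establishes \emph{using} Lemma~\ref{volume-conductor}. So within the paper's logical flow your argument is circular; importing it from \cite{Sha6} is legitimate but defeats the paper's stated purpose of giving an independent derivation. The paper's evaluate-at-${\sf w}_0$ trick avoids this entirely: it extracts the missing piece of information (the ratio of the two volumes, not just their product) from the intertwining operator itself, with no external input.
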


\begin{proof}
We follow the notation of Proposition \ref{GL(n)-intertwining}. Apply both sides of equation \eqref{eqn-Aint} to $\phi_{1,\chi}$ and use Lemma~\ref{intertwining-spherical} to obtain 
\[
 A(\chi,\sigma,{\sf w}_0)\circ \phi_{1,\chi}=c(\chi,\sigma)((c_{{\sf w}_0}(\chi)-1)\phi_{1,\chi}+q_{\a}^{-1}\omega_{\pi}(-1)v_{{\sf w}_0}\phi_{{\sf w}_0,\chi}).
\]
Evaluating this equation at a ``symmetric" $w\in W$ and then evaluating the resulting equation at ${\sf w}_{0}$, we get 
\[
\begin{split}
 { \mathrm{vol}(J')^{-1}} q^{-1}_{\mathfrak{a}}\omega_{\pi}(-1)v_{{\sf w}_0}c(\chi,\sigma)\varphi_{w}
 &=A(\chi,\sigma,{\sf w}_0)(\phi_{1,\chi}(w))({\sf w}_{0})\\
 &=\int\limits_N f_{1,w,\chi}({\sf w}_0^{-1}n{\sf w}_0) dn
 =\int\limits_{J^{\prime} \cap \overline{N}}f_{1,w,\chi}(\overline{n}) d\overline{n}\\
& ={ \mathrm{vol}(J')^{-1}} {\rm vol}(J^{\prime} \cap \overline{N})\varphi_w.
\end{split}
\]
Substituting for $c(\chi,\sigma)$, we conclude from \eqref{c-value} that
\[
{\rm vol}(J^{\prime} \cap \overline{N})=q_{\mathfrak{a}}^{-1}v^2_{{\sf w}_0}{\rm vol} (J^{\prime}\cap N).
\]
Then Lemma \ref{split-volume} implies that 
\[
v^2_{{\sf w}_0}{\rm vol}(J^{\prime} \cap {N})^2=q_{\a}^{m+1}
\]
and the desired conclusion follows from this. 
\end{proof}

Lemma~\ref{split-volume} and Lemma~\ref{volume-conductor} together with our expression for the Plancherel constant gives another proof of the following well-known result of Shahidi \cite[Theorem 6.1]{Sha6}:

\begin{theorem} 
For a pair $(\pi_1,\pi_2)$ of irreducible unitary supercuspidal representations of $GL_n(F)$ sharing the same endo-class as above, there exists a unique measure $dn\otimes d\overline{n}$ defining the intertwining operators so that \[
\mu(s,\pi_1\times\pi_2)
 =q^{f(\pi_1 \times \check{\pi}_2,\psi)} \frac{L(s,\pi_1\times \check{\pi}_2)}{L(1+s,\pi_1 \times \check{\pi}_2)} \frac{L(-s,\check{\pi}_2 \times \pi_1)}{L(1-s,\check{\pi}_2\times \pi_1)}.
 \]

\end{theorem}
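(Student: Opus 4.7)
The approach is to specialize the Plancherel-constant computations already assembled in \S\ref{revisit} and identify the volume prefactor with $q^{f(\pi_1\times\check{\pi}_2,\psi)}$ using Lemmas \ref{split-volume}--\ref{volume-conductor} and Propositions \ref{RS-gamma}--\ref{RS-gamma-equal}. The whole argument splits along the same dichotomy as in \S\ref{LS}, and the measure $dn\otimes d\overline{n}$ is pinned down by the normalization \eqref{L-normal} on the Haar measure of $L$, which, once $\psi$ is fixed, determines the product measure on $N\times\overline{N}$ via the duality of \S\ref{sec-measures}.

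In the split case ($\tau_1\not\cong\tau_2$), Proposition \ref{lemma-1} yields $\mu(s,\pi_1\times\pi_2)=\mathrm{vol}(J\cap N)^{-1}\mathrm{vol}(J\cap\overline{N})^{-1}$, a constant independent of $s$. By Lemma \ref{split-volume} this equals $q_{\a}^{-m}$; since $q_{\a}=q^{n/e}$ and $f(\pi_1\times\check{\pi}_2,\psi)=-mn/e$ by Proposition \ref{RS-gamma}, the constant coincides with $q^{f(\pi_1\times\check{\pi}_2,\psi)}$. Because $\pi_1$ is not an unramified twist of $\pi_2$, every Rankin--Selberg $L$-factor occurring on the right-hand side of the claimed formula is $1$, so the identity holds on the nose.

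In the non-split case $\pi_1\cong\pi_2=\pi$, the computation from \S\ref{revisit} based on Proposition \ref{GL(n)-intertwining} gives
\[
\mu(s,\pi\times\pi)=\mathrm{vol}(J'\cap N)^{-2}v_{{\sf w}_0}^{-2}\cdot\frac{L(1+s,\pi\times\check{\pi})\,L(1-s,\pi\times\check{\pi})}{L(s,\pi\times\check{\pi})\,L(-s,\pi\times\check{\pi})}.
\]
Lemma \ref{volume-conductor} identifies the volume prefactor with $q_{\a}^{-(m+1)}$, which by Proposition \ref{RS-gamma-equal} equals $q^{f(\pi\times\check{\pi},\psi)}$. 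The $L$-function ratio is then matched against the one in the theorem using the standard symmetry $L(s,\pi_1\times\pi_2)=L(s,\pi_2\times\pi_1)$ of Rankin--Selberg factors.

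For uniqueness, since $\mu(s,\pi_1\times\pi_2)=C_{\psi}(s,\pi_1\times\pi_2)C_{\psi}(-s,\pi_2\times\pi_1)$ by \eqref{plancherel-lc} and the local coefficients rescale in a controlled way under rescaling of $dn$, the product measure $dn\otimes d\overline{n}$ yielding the stated clean expression is uniquely determined; this is precisely the measure induced by \eqref{L-normal}. The main technical subtlety is careful bookkeeping with $L$-function symmetries — tracking which $L$-factor lives in the numerator versus the denominator when passing from Theorem \ref{thm-nonsplit-lc} and Lemma \ref{volume-conductor} to Shahidi's form — and verifying that all spurious $s$-dependence (in particular the $q^{\pm fs}$ factors from Theorem \ref{thm-nonsplit-lc}) cancels, leaving exactly the conductor exponent $q^{f(\pi\times\check{\pi},\psi)}$.
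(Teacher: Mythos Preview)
Your proof is correct and follows essentially the same approach as the paper. The paper's own argument is the single sentence that Lemmas~\ref{split-volume} and~\ref{volume-conductor}, combined with the two bullet-point expressions for $\mu(s,\pi_1\times\pi_2)$ at the start of \S\ref{revisit}, yield the theorem; you have simply spelled out these steps and invoked Propositions~\ref{RS-gamma} and~\ref{RS-gamma-equal} to identify the volume exponents $q_{\a}^{-m}$ and $q_{\a}^{-(m+1)}$ with the Rankin--Selberg conductors $q^{f(\pi_1\times\check{\pi}_2,\psi)}$.
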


\begin{remark}
In the course of the proof of Lemma~\ref{split-volume}, we proved \eqref{eqn-equal} in the split case. We are unable to deduce this equality in the non-split case directly through local means. It likely involves a better understanding of the map $\Xi$ in Proposition~\ref{GL(n)-Whittaker}. In any case, with the volume factors in place, we may appeal to the functional equation satisfied by the local coefficient \cite{shahidi-plancherel} (and \cite{HL} in the case of positive characteristic) whose proof invokes a global-to-local argument, and show $f^{\rm LS}(\pi\times\pi,\psi)=f(\pi\times\check{\pi},\psi)$. This in turn implies \eqref{eqn-equal} in the non-split case. In general, it is difficult to prove such equalities using purely local methods. We view Shahidi's work \cite{Sha6} as a miraculous calculation, but his methods do not seem to generalize to other situations. In contrast, the approach here is more `formal' and can be adapted to other situations as pointed out in the Introduction. 
\end{remark}

\begin{acknowledgments} The first-named author would like to thank R. Ye and E. Zelingher for a very useful discussion about the choice of the additive character. The second-named author would like to thank Phil Kutzko for helpful discussions about this paper. We thank an anonymous referee for his/her careful reading of our paper and suggestions on the manuscript. 
\end{acknowledgments}

 \bibliographystyle{amsplain}

\begin{bibdiv}
\begin{biblist}

\bib{bush2}{article}{
   author={Bushnell, Colin J.},
   author={Henniart, Guy},
   title={Supercuspidal representations of ${\rm GL}_n$: explicit Whittaker
   functions},
   journal={J. Algebra},
   volume={209},
   date={1998},
   number={1},
   pages={270--287},
   }
   
    \bib{B-Hen}{article}{
   author={Bushnell, Colin J.},
   author={Henniart, Guy},
   title={To an effective local Langlands correspondence},
   journal={Mem. Amer. Math. Soc.},
   volume={231},
   date={2014},
   number={1087},
   pages={v+88},
  }

  \bib{BH}{book}{
   author={Bushnell, Colin J.},
   author={Henniart, Guy},
   title={The local Langlands conjecture for $\rm GL(2)$},
   series={Grundlehren der Mathematischen Wissenschaften [Fundamental
   Principles of Mathematical Sciences]},
   volume={335},
   publisher={Springer-Verlag, Berlin},
   date={2006},
   pages={xii+347},
   }

\bib{BHK}{article}{
   author={Bushnell, Colin J.},
   author={Henniart, Guy M.},
   author={Kutzko, Philip C.},
   title={Local Rankin-Selberg convolutions for ${\rm GL}_n$: explicit
   conductor formula},
   journal={J. Amer. Math. Soc.},
   volume={11},
   date={1998},
   number={3},
   pages={703--730},
 }
 
 \bib{Kut3}{book}{
   author={Bushnell, Colin J.},
   author={Kutzko, Philip C.},
   title={The admissible dual of ${\rm GL}(N)$ via compact open subgroups},
   series={Annals of Mathematics Studies},
   volume={129},
   publisher={Princeton University Press, Princeton, NJ},
   date={1993},
   pages={xii+313},
   }
   
    \bib{BK}{article}{
   author={Bushnell, Colin J.},
   author={Kutzko, Philip C.},
   title={Smooth representations of reductive $p$-adic groups: structure
   theory via types},
   journal={Proc. London Math. Soc. (3)},
   volume={77},
   date={1998},
   number={3},
   pages={582--634},
   }

   \bib{Kut2}{article}{
   author={Bushnell, Colin J.},
   author={Kutzko, Philip C.},
   title={Semisimple types in ${\rm GL}_n$},
   journal={Compositio Math.},
   volume={119},
   date={1999},
   number={1},
   pages={53--97},
   }
   
    \bib{BuFe}{article}{
   author={Bump, Daniel},
   author={Friedberg, Solomon},
   title={The exterior square automorphic $L$-functions on ${\rm GL}(n)$},
   conference={
      title={Festschrift in honor of I. I. Piatetski-Shapiro on the occasion
      of his sixtieth birthday, Part II},
      address={Ramat Aviv},
      date={1989},
   },
   book={
      series={Israel Math. Conf. Proc.},
      volume={3},
      publisher={Weizmann, Jerusalem},
   },
   date={1990},
   pages={47--65},
  }

    \bib{BK2}{article}{
   author={Bushnell, Colin J.},
   author={Kutzko, Philip C.},
   title={Types in reductive $p$-adic groups: the Hecke algebra of a cover},
   journal={Proc. Amer. Math. Soc.},
   volume={129},
   date={2001},
   number={2},
   pages={601--607},
   }

   \bib{cass1}{article}{
   author={Casselman, W.},
   title={The unramified principal series of ${\germ p}$-adic groups. I. The
   spherical function},
   journal={Compositio Math.},
   volume={40},
   date={1980},
   number={3},
   pages={387--406},
  }

\bib{cass2}{article}{
   author={Casselman, W.},
   author={Shalika, J.},
   title={The unramified principal series of $p$-adic groups. II. The
   Whittaker function},
   journal={Compositio Math.},
   volume={41},
   date={1980},
   number={2},
   pages={207--231},
   }
      
   \bib{ChanSavin2}{article}{
   author={Chan, Kei Yuen},
   author={Savin, Gordan},
   title={Iwahori component of the Gelfand-Graev representation},
   journal={Math. Z.},
   volume={288},
   date={2018},
   number={1-2},
   pages={125--133},
  }
    
    \bib{ChanSavin1}{article}{
   author={Chan, Kei Yuen},
   author={Savin, Gordan},
   title={Bernstein-Zelevinsky derivatives: a Hecke algebra approach},
   journal={Int. Math. Res. Not. IMRN},
   date={2019},
   number={3},
   pages={731--760},
   }

       \bib{HL}{article}{
   author={Henniart, Guy},
   author={Lomel\'{\i}, Luis},
   title={Uniqueness of Rankin-Selberg products},
   journal={J. Number Theory},
   volume={133},
   date={2013},
   number={12},
   pages={4024--4035},
  }

  \bib{Howe85}{book}{
   author={Howe, Roger},
   title={Harish-Chandra homomorphisms for ${\germ p}$-adic groups},
   series={CBMS Regional Conference Series in Mathematics},
   volume={59},
   note={With the collaboration of Allen Moy},
   publisher={Published for the Conference Board of the Mathematical
   Sciences, Washington, DC; by the American Mathematical Society,
   Providence, RI},
   date={1985},
   pages={xi+76},
 }

\bib{JPSS3}{article}{
   author={Jacquet, H.},
   author={Piatetskii-Shapiro, I. I.},
   author={Shalika, J. A.},
   title={Rankin-Selberg convolutions},
   journal={Amer. J. Math.},
   volume={105},
   date={1983},
   number={2},
   pages={367--464},
  }
  
   \bib{DnS}{article}{
   author={Jiang, Dihua},
   author={Nien, Chufeng},
   author={Stevens, Shaun},
   title={Towards the Jacquet conjecture on the local converse problem for
   $p$-adic ${\rm GL}_n$},
   journal={J. Eur. Math. Soc.},
   volume={17},
   date={2015},
   number={4},
   pages={991--1007},
  }

  \bib{Kim}{article}{
   author={Kim, Ju-Lee},
   title={A generalized Casselman-Shalika formula on $GL_N$},
   conference={
      title={Advances in the theory of automorphic forms and their
      $L$-functions},
   },
   book={
      series={Contemp. Math.},
      volume={664},
      publisher={Amer. Math. Soc., Providence, RI},
   },
   date={2016},
   pages={209--223},
   }
    \bib{Kim14}{article}{
   author={Kim, Ju-Lee},
   title={An inductive formula for $\epsilon$-factors},
   conference={
      title={Automorphic forms and related geometry: assessing the legacy of
      I. I. Piatetski-Shapiro},
   },
   book={
      series={Contemp. Math.},
      volume={614},
      publisher={Amer. Math. Soc., Providence, RI},
   },
   date={2014},
   pages={243--260},
  }

    \bib{KK}{article}{
   author={Krishnamurthy, M.},
   author={Kutzko, P.},
   title={Computing local coefficients via types and covers: the example of
   $SL(2)$},
   journal={Bull. Iranian Math. Soc.},
   volume={43},
   date={2017},
   number={4},
   pages={221--234},
   }

  \bib{Kut4}{article}{
   author={Kutzko, P. C.},
   title={Mackey's theorem for nonunitary representations},
   journal={Proc. Amer. Math. Soc.},
   volume={64},
   date={1977},
   number={1},
   pages={173--175},
   }
   
     \bib{Lusz}{article}{
   author={Lusztig, George},
   title={Affine Hecke algebras and their graded version},
   journal={J. Amer. Math. Soc.},
   volume={2},
   date={1989},
   number={3},
   pages={599--635},
   }

     \bib{Modal}{article}{
   author={Mondal, Amiya Kumar},
   title={On a conductor formula of Bushnell, Henniart and Kutzko},
   journal={J. Ramanujan Math. Soc.},
   volume={31},
   date={2016},
   number={4},
   pages={323--337},
  }
    
  \bib{SP}{article}{
   author={Paskunas, Vytautas},
   author={Stevens, Shaun},
   title={On the realization of maximal simple types and epsilon factors of
   pairs},
   journal={Amer. J. Math.},
   volume={130},
   date={2008},
   number={5},
   pages={1211--1261},
  }
  
  \bib{Prasad}{article}{
   author={Prasad, Amritanshu},
   title={On Bernstein's presentation of Iwahori-Hecke algebras and
   representations of split reductive groups over non-Archimedean local
   fields},
   journal={Bull. Kerala Math. Assoc.},
   date={2005},
   number={Special Issue},
   pages={31--51},
  }
  
  \bib{Reeder}{article}{
   author={Reeder, Mark},
   title={Hecke algebras and harmonic analysis on $p$-adic groups},
   journal={Amer. J. Math.},
   volume={119},
   date={1997},
   number={1},
   pages={225--249},
   }
   
    \bib{Sh81}{article}{
   author={Shahidi, Freydoon},
   title={On certain $L$-functions},
   journal={Amer. J. Math.},
   volume={103},
  date={1981},
  number={2},
   pages={297--355},
   }
  
  \bib{Sha10}{article}{
   author={Shahidi, Freydoon},
   title={Local coefficients and normalization of intertwining operators for
   ${\rm GL}(n)$},
   journal={Compositio Math.},
   volume={48},
   date={1983},
   number={3},
   pages={271--295},
   }
  
  \bib{Sha6}{article}{
   author={Shahidi, Freydoon},
   title={Fourier transforms of intertwining operators and Plancherel
   measures for ${\rm GL}(n)$},
   journal={Amer. J. Math.},
   volume={106},
   date={1984},
   number={1},
   pages={67--111},
   }

      \bib{Sha-Ramanujan}{article}{
   author={Shahidi, Freydoon},
   title={On the Ramanujan conjecture and finiteness of poles for certain
   $L$-functions},
   journal={Ann. of Math. (2)},
   volume={127},
   date={1988},
   number={3},
   pages={547--584},
  
}
	\bib{shahidi-plancherel}{article}{
   author={Shahidi, Freydoon},
   title={A proof of Langlands' conjecture on Plancherel measures;
   complementary series for $p$-adic groups},
   journal={Ann. of Math. (2)},
   volume={132},
   date={1990},
   number={2},
   pages={273--330},
  }
			
   \bib{Tam}{article}{
   author={Tam, Geo Kam-Fai},
   title={Explicit Whittaker data for essentially tame supercuspidal
   representations},
   journal={Pacific J. Math.},
   volume={301},
   date={2019},
   number={2},
   pages={617--638},
   }
   
   \bib{YeZe}{article}{
   author={Ye, Rongqing},
   author={Zelingher, Elad},
   title={Epsilon factors of representations of finite general linear
   groups},
   journal={J. Number Theory},
   volume={221},
   date={2021},
   pages={122--142},
  }

\end{biblist}
\end{bibdiv}

\end{document}